\numberwithin{equation}{section}
\newtheorem{theo}{Theorem}[section]
\newtheorem{prop}[theo]{Proposition}
\newtheorem{lemme}[theo]{Lemma}
\newtheorem{claim}[theo]{Claim}
\theoremstyle{remark}
\renewcommand{\(}{\left(}
\renewcommand{\)}{\right)}
\newcommand{\be}{\begin{equation*}}
\newcommand{\ee}{\end{equation*}}
\newcommand{\ben}{\begin{equation}}
\newcommand{\een}{\end{equation}}
\newcommand{\begincal}{\begin{eqnarray*}}
\newcommand{\fincal}{\end{eqnarray*}}
\newcommand{\bal}{\begin{aligned}}
\newcommand{\eal}{\end{aligned}}
\newcommand{\pui}{\frac{n-2}{2}}
\newcommand{\hpv}{\hat{\vp}_{\ve} }
\newcommand{\RR}{\mathbb{R}}
\newcommand{\ve}{\varepsilon}
\newcommand{\vp}{\varphi}
\numberwithin{equation}{section}
\title[]{Towers of bubbles for Yamabe-type equations and for the Br\'ezis-Nirenberg problem in dimensions $n \ge 7$} %
\author{Bruno Premoselli}
\address{Bruno Premoselli, Universit\'e Libre de Bruxelles, Service de G\'eom\'etrie Diff\'erentielle CP 218, Boulevard du Triomphe, B-1050 Bruxelles, Belgique.}
\email{bruno.premoselli@ulb.ac.be \\ }
\begin{document}

\begin{abstract}
Let $(M,g)$ be a closed locally conformally flat Riemannian manifold of dimension $n \ge 7$ and of positive Yamabe type. If $\xi_0$ denotes a non-degenerate critical point of the mass function we prove the existence, for any $ k \ge 1$ and $\ve >0$,  of a \emph{positive} blowing-up solution $u_\ve$ of
$$\triangle_g u_\ve +\big( c_n S_g  +\ve h\big) u_\ve = u_\ve^{2^*-1},$$
that blows up like the superposition of $k$ positive bubbles concentrating at different speeds at $\xi_0$. The method of proof combines a finite-dimensional reduction with the sharp pointwise analysis of solutions of a linear problem. As another application of this method of proof we construct \emph{sign-changing} blowing-up solutions $u_\ve$ for the Br\'ezis-Nirenberg problem
$$ \triangle_{\xi} u_\ve - \ve u_\ve = |u_\ve|^{\frac{4}{n-2}} u_\ve \textrm{ in } \Omega, \quad u_\ve = 0 \textrm{ on } \partial \Omega $$
on a smooth bounded open set $\Omega \subset \RR^n$, $n \ge 7$, that look like the superposition of $k$ positive bubbles of alternating sign.
\end{abstract}

\maketitle

\section{Introduction and statement of the results}

\subsection{Positive towers of bubbles in the Riemannian case}

Let $\(M,g\)$ be a closed smooth connected Riemannian manifold, that is compact without boundary, of dimension $n\ge3$. Assume $(M,g)$ is of positive Yamabe type.  The main goal of this work is to construct blowing-up families of positive solutions $(u_\ve)_\ve$ of critical nonlinear elliptic equations of the following form:
\begin{equation} \label{eqZ}
\triangle_g u_{\ve} + h_\ve u_{\ve} = u_{\ve}^{2^*-1},
\end{equation}
where $\triangle_g = - \textrm{div}_g(\nabla \cdot)$, $2^* = \frac{2n}{n-2}$ and where  $h_\ve$ converges in $C^1(M)$ as $\ve \to 0$. We construct sequences that display \emph{towering} bubbling phenomena when $h_\ve \to c_nS_g$, where $c_n = \frac{n-2}{4(n-1)}$ and where $S_g$ is the scalar curvature of $(M,g)$.  

\medskip

When $h_\ve \equiv c_n S_g$ equation \eqref{eqZ} is the celebrated Yamabe equation. The existence of a positive solution in this case was proven through the work of Yamabe \cite{yamabe}, Aubin \cite{aubin}, Trudinger \cite{Trudinger} and Schoen \cite{SchoenYamabe}. The question of the compactness of the set of positive solutions of the Yamabe equation on manifolds of positive Yamabe type has generated a vast amount of work in the last decades. On the standard sphere $(\mathbb S^{n},g_0)$, $n \ge 3$, the set of positive solutions is not compact, but if $(M,g)$ is not conformally diffeomorphic to the standard sphere, the set of positive solutions of the Yamabe equation is compact when the dimension of the manifold satisfies $3\le n\le 24$, as was shown by Khuri, Marques and Schoen \cite{KhuriMarquesSchoen} (previous  results were obtained Li and  Zhu \cite{LiZhu}, Druet \cite{DruetYlowdim}, Marques \cite{Marques} and Li and  Zhang \cite{LiZhang}). In dimensions $n \ge 25$ Brendle \cite{Brendle} and Brendle and Marques \cite{BreMa} constructed examples
of manifolds for which the set of positive solutions is not compact. From the PDE point of view, understanding the compactness of solutions to the Yamabe equation (and, more generally, of larger classes of nonlinear critical elliptic equations) relies on establishing sharp blow-up estimates near blow-up points. A crucial step in the proof of the compactness of the Yamabe equation consists in showing that, in dimensions $3 \le n \le 24$, all the possible blow-up points of a sequence of positive solutions must be {\em isolated} and {\em simple} (following the terminology of Schoen \cite{SchoenPreprint}) i.e. that around each blow-up point $\xi_0$ the solution can be approximated by a so called {\it standard bubble }
 \ben \label{stdbubble}
 W_k(x)\sim { \frac{\mu_k^{\pui}}{\(\mu_k^2+\frac{d_g(x,\xi_k)^2}{n(n-2)} \)^{\pui}}} \ \hbox{
 for some $\xi_k\in M$ and $\mu_k\to0.$}
 \een

\medskip

For more general equations like \eqref{eqZ} one may wonder if compactness properties  -- or, more generally, \emph{stability} properties according to the terminology in Hebey \cite{HebeyZLAM} -- remain true for positive solutions when $h_\ve$ in \eqref{eqZ} is not everywhere equal to $c_nS_g$. There the picture becomes more complicated. If $n=3$, no \emph{positive} blowing-up positive solutions exist as long as the mass of the limiting operator is positive as proved by  Li-Zhu  \cite{LiZhu}. If we denote by $h_\infty$ the $C^1$ limit of $h_\ve$ and if $(M,g)$ is not conformally diffeomorphic to the standard sphere, the mass of $\triangle_g + h_\infty$ remains positive even if $h_\infty > c_3 S_g$ everywhere, provided $h_\infty$ is not too large. When $n \ge 4$, Druet \cite{DruetYlowdim} proved that \eqref{eqZ} does not have any positive blowing-up solution when $h_{\infty} < c_n S_g$ everywhere, or when $n=4,5$ and $h_\ve \le c_n S_g$ for all $\ve$ and the manifold is not conformally equivalent to the round sphere. 

\medskip

If $h_\ve > c_n S_g$ somewhere, the situation is subtler and blow-up is likely to occur when $n \ge 4$. We focus in this paper in the following special case of \eqref{eqZ}: 
\begin{equation} \label{eqY}
\triangle_g u +\big( c_n S_g  +\ve h\big) u = u^{2^*-1},
\end{equation}
where $h \in C^0(M)$ and $\ve >0$. If $n\ge4$, positive single-bubble blowing-up solutions or multi-bubble solutions with isolated concentration points do exist  as shown by Esposito, Pistoia and Vetois in \cite{EspositoPistoiaVetois}. Pistoia and Vaira proved in \cite{PistoiaVaira}, when $h \equiv 1$ and $n \ge7$, the existence of blowing-up solutions of \eqref{eqY} possessing a so-called  {\it clustering} blow-up point at a non-degenerate non-zero minimum point of the Weyl's tensor. In these so-called \emph{clustering} configurations the solutions blow-up as a finite number of peaks, of comparable heights, whose centers converge to the same point. Clustering configurations in dimensions $4$ and $5$ have also been constructed by Thizy and V\'etois \cite{ThizyVetois} and, for families of equations like \eqref{eqZ}, in dimensions $n \ge 6$ by Robert and V\'etois \cite{RobertVetois2}. 

\medskip

We address here the question of existence of so-called \emph{towering bubbling configurations} for positive solutions of \eqref{eqY}. These are finite-energy blowing-up positive solutions that blow-up as a superposition of positive bubbles concentrating at different speed, and whose centers converge to the same point -- these configurations are also sometimes referred to as \emph{towers of bubbles}. We prove  that \emph{towering} blow-up points for \eqref{eqY} can be constructed in dimension $n \ge 7$ on locally conformally flat manifolds. Our main result is as follows:
\begin{theo} \label{maintheo}
Let $(M,g)$ be a closed locally conformally flat Riemannian manifold of dimension $n \ge 7$ of positive Yamabe type. Let $\xi_0$ denote a non-degenerate critical point of the mass function $\xi \mapsto m(\xi)$.  Let $k \ge 1$. Then there exists a family $(u_\ve)_\ve$ of positive solutions of
\[ \triangle_g u_\ve + (c_n S_g + \ve ) u_\ve = u_\ve^{2^*-1} \]
 that blows-up with $k$ bubbles at $\xi_0$, and for which $\xi_0$ is a \emph{towering} blow-up point.
\end{theo}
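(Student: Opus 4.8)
The plan is to construct the solution by a Lyapunov--Schmidt finite-dimensional reduction. The approximate solution will be a sum of $k$ standard bubbles $W_{\mu_i,\xi_i}$ of the form \eqref{stdbubble}, all centered near $\xi_0$, with concentration speeds $\mu_1 \gg \mu_2 \gg \cdots \gg \mu_k$ chosen so that the successive ratios $\mu_{i+1}/\mu_i$ tend to $0$ as $\ve \to 0$; this scale separation is what distinguishes a \emph{tower} from a \emph{cluster}. Because $(M,g)$ is locally conformally flat, I would work in a conformal chart around $\xi_0$ where the metric is flat, so that each bubble is (up to the conformal factor) an exact Euclidean Aubin--Talenti instanton; this makes the interaction integrals between consecutive bubbles computable in closed form, and it is here that the \emph{mass} $m(\xi)$ of the conformal Laplacian enters, via the regular part of the Green's function, exactly as in the positive-mass / Schoen--type expansions. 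The ansatz is $u_\ve = \sum_{i=1}^k W_{\mu_i,\xi_i} + \phi_\ve$, and one fixes the bubbles by imposing orthogonality of $\phi_\ve$ to the approximate kernel spanned by the functions $\partial_{\mu_i} W_{\mu_i,\xi_i}$ and $\partial_{(\xi_i)_a} W_{\mu_i,\xi_i}$.

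The first main step is the linear theory: one must show that the linearized operator $\mathcal L_\ve := \triangle_g + (c_n S_g + \ve) - (2^*-1)\big(\sum_i W_{\mu_i,\xi_i}\big)^{2^*-2}$, restricted to the space orthogonal to the approximate kernel, is invertible with inverse bounded uniformly in $\ve$, in suitable weighted $L^\infty$ (or weighted Sobolev) norms. The difficulty here, specific to towers as opposed to isolated bubbles, is that the potential $\big(\sum_i W_{\mu_i,\xi_i}\big)^{2^*-2}$ is \emph{not} a small perturbation of a single-bubble potential in any region where two consecutive bubbles overlap, so the usual blow-up/contradiction argument must be run scale by scale: one rescales at each scale $\mu_i$, extracts a limit which solves the linearized Yamabe equation on $\RR^n$ (whose bounded kernel is exactly the span of $\partial_\mu W, \partial_{\xi} W$ by Bianchi--Egnell), and uses the orthogonality conditions to kill that limit, together with a careful treatment of the transition annuli between scales — this is the step I expect to be the main obstacle, and presumably the ``sharp pointwise analysis of solutions of a linear problem'' alluded to in the abstract does precisely this.

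Granting the uniform linear estimate, the second step is a fixed-point argument: writing the equation for $\phi_\ve$ as $\phi_\ve = \mathcal L_\ve^{-1}\big(R_\ve + N_\ve(\phi_\ve)\big)$ where $R_\ve$ is the error $\triangle_g(\sum W) + (c_n S_g + \ve)\sum W - (\sum W)^{2^*-1}$ and $N_\ve$ collects the superlinear remainder, one checks that $R_\ve$ is small in the weighted norm — its size is controlled by the interaction terms $\sim (\mu_{i+1}/\mu_i)^{(n-2)/2}$, by the mass term $\sim \ve$ and $\sim \mu_i^{n-2} m(\xi_0)$, and, crucially, requires $n \ge 7$ so that the bubble-interaction and curvature errors dominate the self-interaction error of order $\mu_i^2$ coming from the conformally flat but not flat background. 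A contraction mapping argument then produces a unique small $\phi_\ve = \phi_\ve(\mu,\xi)$ depending smoothly on the parameters.

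The final step is to solve the reduced (finite-dimensional) problem: substituting $u_\ve = \sum W_{\mu_i,\xi_i} + \phi_\ve$ into the energy functional $J_\ve$ and expanding, one gets $J_\ve(\sum W + \phi_\ve) = k\,\tfrac1n K_n^{-n} + \Phi_\ve(\mu,\xi) + o(\cdot)$ where the reduced energy $\Phi_\ve$ has the schematic form, for appropriate positive constants,
\[
\Phi_\ve(\mu,\xi) = \sum_{i=1}^{k-1} c_1 \Big(\tfrac{\mu_{i+1}}{\mu_i}\Big)^{\frac{n-2}{2}} - c_2\, m(\xi_0)\sum_{i=1}^{k} \mu_i^{n-2} + c_3\,\ve \sum_{i=1}^{k} \mu_i^{2} + (\text{l.o.t.}),
\]
up to the dependence of $m$ on $\xi$ near $\xi_0$. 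After the natural logarithmic change of variables for the $\mu_i$, one shows this function has a critical point: the balancing of the repulsive tower-interaction terms against the attractive mass term fixes the ratios $\mu_{i+1}/\mu_i$ and, together with the $\ve\mu_i^2$ term, the overall scale, while non-degeneracy of $\xi_0$ as a critical point of $m$ fixes the centers $\xi_i \to \xi_0$; a degree-theoretic or direct min-max argument, using the non-degeneracy hypothesis on $\xi_0$, then yields a stable critical point of $\Phi_\ve$, which by the standard variational reduction corresponds to an honest solution $u_\ve$ of \eqref{eqY} with $h \equiv 1$. Positivity of $u_\ve$ follows because the ansatz is positive and $\phi_\ve$ is lower-order, so a maximum-principle / Moser-iteration argument on the (sign of the) solution applies. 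This produces the $k$-bubble towering solution claimed in Theorem~\ref{maintheo}.
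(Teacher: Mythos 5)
Your plan follows the same Lyapunov--Schmidt philosophy as the paper and correctly identifies the main difficulty (the linear theory for the multi-scale potential), but the route diverges at both technical cruxes and there are a couple of genuine gaps worth flagging.

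On the linear theory, you propose a blow-up/contradiction argument run scale by scale in a weighted $L^\infty$ norm. In contrast, the paper's Proposition~\ref{proplin} is proved \emph{constructively}: a Green's-function representation formula on the sphere (Lemma~\ref{lemmerep}) is applied in each rescaled annulus, and the estimate is bootstrapped inductively from the innermost bubble outward, with the kernel-projection coefficients $\lambda_{ij}$ estimated simultaneously. Both methods should establish uniform invertibility, but the paper's method produces much finer information than ``$\mathcal L_\ve^{-1}$ is bounded'': it shows that in each annulus $B_\ell\setminus B_{\ell+1}$ the remainder is controlled exactly by the Newtonian potential of the data $k_\ve$, as if the potential $f'(\sum W_i)$ were absent (see \eqref{estvpi}, \eqref{estvp1} and the surrounding discussion of \eqref{interpretation}). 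This is precisely the precision that makes the kernel-coefficient expansion of Section~\ref{expansionkernel} tractable on a non-symmetric background. A contradiction argument in a weighted norm can in principle reproduce this, but only if the weight already encodes the full multi-scale structure \eqref{CapPsi}, including the transition terms $\frac{\theta_{\ell+1}^2}{\mu_\ell^2}W_{\ell+1}\mathds{1}_{\{\theta_{\ell+1}\le\mu_\ell\}}$; without specifying this, your linear step does not yet deliver what the reduction needs.

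On the reduced problem, you propose expanding the energy $J_\ve(\sum W + \phi_\ve)$ and locating a stable critical point of the reduced functional $\Phi_\ve(\mu,\xi)$. The paper instead estimates the kernel coefficients $\nu_{i,j}$ directly (Propositions~\ref{propnu1}, \ref{propnuge2}) and closes with a degree argument on the leading-order system. The energy route requires $C^1$ dependence of $\phi_\ve$ on the parameters, which is delicate in the sharp weighted spaces used here; the paper's construction only yields continuous dependence (Proposition~\ref{propnonlin}), and the $C^0$ degree-theoretic route is chosen precisely to avoid having to differentiate $\phi_\ve$. Your schematic $\Phi_\ve$ is also not quite right as written: the mass term $-c_2 m(\xi_0)\mu_i^{n-2}$ only contributes at leading order for $i=1$, since for $i\ge2$ one has $\mu_i^{n-2}=o(\ve\mu_i^2)$ by \eqref{growthmuell}; for the inner bubbles the balance is between $\ve\mu_\ell^2$ and the consecutive interaction $(\mu_\ell/\mu_{\ell-1})^{(n-2)/2}U_0(z_\ell)$, as in Proposition~\ref{propnuge2}.

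Finally, a concrete gap: you leave the bubble centers $\xi_i$ as free parameters ``near $\xi_0$'', but the correct parameterization is relative, $\xi_\ell = \exp_{\xi_{\ell-1}}^{g_{\xi_{\ell-1}}}(\mu_{\ell-1}z_\ell)$ as in \eqref{defxi}, with $z_\ell$ varying in a fixed ball. If you instead let each $\xi_\ell$ range over a fixed ($\ve$-independent) neighborhood of $\xi_0$, the Jacobian of the reduced map with respect to $(\xi_2,\dots,\xi_k)$ degenerates as $\ve\to 0$ (the $\nu_{\ell,j}$ for $j\ge1$, $\ell\ge2$ are sensitive to $\xi_\ell$ only at scale $\mu_{\ell-1}$), and the degree argument does not close. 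The non-degeneracy of $\xi_0$ as a critical point of the mass fixes only $\xi_1$; the relative displacements $z_\ell$ are fixed by the translation components $\nu_{\ell,j}$, $j\ge 1$, and must be present as explicit variables in the reduced system.
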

For the definition of the Riemannian mass we refer to \eqref{defmass} below. Morabito, Pistoia and Vaira in \cite{mpv} first proved the existence of positive towering bubbling configurations on non-locally conformally flat manifolds, at a point $\xi_0$ where the Weyl's tensor does not vanish, under the stringent assumption that the manifold is symmetric with respect to $\xi_0.$ In the present work we treat the locally conformally flat case that was left open, and we perform the construction without any symmetry assumption on $M$: this considerably increases the complexity of the proof and requires us to develop a new analytical approach. Towering bubble configurations for positive solutions similar to the ones constructed in this paper do not exist in dimensions $n \le 6$, as can be seen by an adaptation of the arguments in Druet \cite{DruetJDG} (see also Hebey \cite{HebeyZLAM}, Chapter 8). In low dimensions $n =4,5$, the closest analogue to a positive towers of bubbles is made of isolated peaks concentrating at different speeds and has been built in Premoselli-Thizy  \cite{PremoselliThizy}. Theorem \ref{maintheo} thus completes the constructive picture of the possible blow-up scenarios for positive solutions of \eqref{eqY}.

\medskip

We stated Theorem \ref{maintheo} when $h \equiv 1$ for simplicity. If one allows a more general function $h \in C^1(M)$ (compare with \eqref{eqY}) it is still possible to prove the existence of towering blow-up points. The nondegeneracy assumption on $\xi_0$, in particular, can be lifted if one chooses $h$ suitably (see e.g. the end of the proof of Theorem \ref{maintheo} at the very end of Section \ref{expansionkernel} where this topic is dicussed).

\subsection{Sign-changing towers of bubbles for the Br\'ezis-Nirenberg problem}

The method we use to prove Theorem \ref{maintheo} also applies to the construction of \emph{sign-changing} solutions to the classical Br\'ezis-Nirenberg problem \cite{bn}
\begin{equation}\label{brni}
\left\{\begin{aligned}&\triangle_\xi u-\varepsilon u=|u|^{\frac{4}{n-2}}u\ &&
  \hbox{in}\ \Omega,\\  &u=0 \  && \hbox{on}\ \partial\Omega\end{aligned}\right.\end{equation}
 where $\xi$ is the Euclidean metric, $ \triangle_\xi  = - \sum_{i=1}^n \partial_i^2$,  $\Omega\subset\mathbb R^n,$ $n\ge 3$ is an open and bounded smooth domain. It is well  known that \eqref{brni} possesses positive blowing-up solutions, when $\epsilon>0$ is small enough and $n\ge4$, blowing-up at distinct isolated points (see Rey \cite{r1} and Musso and Pistoia \cite{mupi}). In the sign-changing case and when $n \ge 7$, Iacopetti and Vaira \cite{iv} built \emph{sign-changing} solutions to problem \eqref{brni} whose shape looks like the superposition of bubbles with alternating sign  centered at the origin, provided the domain $\Omega$ is symmetric with respect to the origin. This is in sharp contrast with the \emph{positive} case where positive towers of bubbles, at least on symmetric domains, cannot occur for  $\epsilon>0$ small enough as was proven by Cerqueti  \cite{ce} using ideas of Li \cite{LiSn1, LiSn2}. We also mention Iacopetti and Vaira \cite{IacopettiVaira2} where sign-changing solutions with asymptotically vanishing negative part have been constructed.

 \medskip 
 
 The analysis that we develop in this paper to prove Theorem \ref{maintheo} adapts to the Euclidean case of the Br\'ezis-Nirenberg problem. Without assuming any symmetry in the domain we are able to prove the existence of sign-changing towering bubbling configurations:
 
\begin{theo} \label{theoBN}
Let $\Omega$ be a smooth bounded subset of $\RR^n$, $n \ge 7$, and let $\xi_0$ denote a non-degenerate critical point of the Robin function $\xi \mapsto H(\xi,\xi)$. Let $k \ge 1$. There exists a family $(u_\ve)_\ve$ of sign-changing solutions of
\ben \label{eqh1}
 \triangle_{\xi} u_\ve - \ve u_\ve = |u_\ve|^{\frac{4}{n-2}} u_{\ve} 
 \een
 that blows-up as the superposition of $k$ bubbles with alternating sign at $\xi_0$. 
 \end{theo}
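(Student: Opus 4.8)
The plan is to prove Theorem \ref{theoBN} by the same finite-dimensional Lyapunov--Schmidt reduction, adapted to towers, that underlies Theorem \ref{maintheo}, now transposed to the Euclidean setting. Let $G$ be the Dirichlet Green's function of $\triangle_\xi$ on $\Omega$ and $H$ its regular part, so that $\xi \mapsto H(\xi,\xi)$ is the Robin function. For $\delta>0$ and $x_0\in\Omega$ let $U_{\delta,x_0}$ denote the standard bubble and $PU_{\delta,x_0}\in H^1_0(\Omega)$ its projection, i.e. the solution of $\triangle_\xi PU_{\delta,x_0} = U_{\delta,x_0}^{2^*-1}$ in $\Omega$ with $PU_{\delta,x_0}=0$ on $\partial\Omega$; recall $PU_{\delta,x_0} = U_{\delta,x_0} - \kappa_n\,\delta^{(n-2)/2}H(\cdot,x_0) + O\big(\delta^{(n+2)/2}\big)$. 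Given $k\ge1$ I would look for a solution of \eqref{eqh1} of the form
\[ u_\ve \;=\; W_\ve + \phi_\ve, \qquad W_\ve \;:=\; \sum_{i=1}^k (-1)^{i+1}\, PU_{\mu_{i,\ve},\,x_{i,\ve}}, \]
where the $k$ concentration parameters form a \emph{tower}, $\mu_{1,\ve}\gg\mu_{2,\ve}\gg\cdots\gg\mu_{k,\ve}\to0$ with $\mu_{i+1,\ve}/\mu_{i,\ve}\to0$, where all the centers $x_{i,\ve}$ converge to $\xi_0$, and where $\phi_\ve$ is a small remainder, $H^1_0$-orthogonal to the approximate kernel $K_\ve$ spanned by the derivatives $\partial_\mu PU_{\mu_{i,\ve},x_{i,\ve}}$ and $\partial_{x^j} PU_{\mu_{i,\ve},x_{i,\ve}}$ ($1\le i\le k$, $1\le j\le n$).

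First I would carry out the linearisation. Writing $\mathcal{R}_\ve := \triangle_\xi W_\ve - \ve W_\ve - |W_\ve|^{\frac{4}{n-2}}W_\ve$ for the error of the ansatz and $L_\ve\phi := \triangle_\xi\phi - \ve\phi - \tfrac{n+2}{n-2}|W_\ve|^{\frac{4}{n-2}}\phi$ for the linearised operator, the crux — and this is precisely the analytical input the paper develops for Theorem \ref{maintheo} — is the \emph{sharp pointwise} study of $L_\ve$ on $K_\ve^{\perp}$: because consecutive bubbles live at widely separated scales, the ordinary $H^1$-coercivity of $L_\ve$ on $K_\ve^\perp$ degenerates, and one must instead prove invertibility together with a priori bounds in weighted norms adapted to the tower, controlling a solution $\phi$ of $L_\ve\phi=f$ pointwise by the sum of the $k$ bubbles each at its own scale. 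Granting this, a contraction-mapping argument provides, for every small $\ve$, a unique $\phi_\ve=\phi_\ve(\mu_{\cdot,\ve},x_{\cdot,\ve})\in K_\ve^\perp$, depending smoothly on the parameters, with $L_\ve\phi_\ve + \mathcal{R}_\ve + N_\ve(\phi_\ve)\in K_\ve$, where $N_\ve$ collects the superquadratic terms, and $\|\phi_\ve\|$ controlled by $\|\mathcal{R}_\ve\|$, which is small by the choice of the scales.

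It then remains to solve the finite-dimensional reduced problem: to choose $(\mu_{\cdot,\ve},x_{\cdot,\ve})$ so that $u_\ve = W_\ve+\phi_\ve$ is an exact solution, which by the variational structure is equivalent to finding a critical point of the reduced functional $(\mu_\cdot,x_\cdot)\mapsto J_\ve(W_\ve+\phi_\ve)$, where $J_\ve(u)=\tfrac12\int_\Omega\big(|\nabla u|^2-\ve u^2\big)-\tfrac{1}{2^*}\int_\Omega|u|^{2^*}$. I would substitute $\mu_{i,\ve}=d_{i,\ve}\,\ve^{\gamma_i}$ with exponents $0<\gamma_1<\cdots<\gamma_k$ chosen so that, in the expansion of $J_\ve(W_\ve+\phi_\ve)$, the relevant families of terms become, for each $i$, of the same order: the linear term $\sim \ve\,\mu_{i,\ve}^{2}$, the boundary self-interaction $\sim H(x_{i,\ve},x_{i,\ve})\,\mu_{i,\ve}^{\,n-2}$, and the interaction of the consecutive bubbles $i$ and $i+1$, which is comparable to a power of the ratio $\mu_{i+1,\ve}/\mu_{i,\ve}$ and enters with a fixed sign thanks to the alternation. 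Optimising in the $d_{i,\ve}$ then leads to a triangular algebraic system, solved recursively to produce $d_{i,\ve}\to\bar d_i>0$; here $n\ge7$ is exactly what makes the remaining terms (the contribution of $\phi_\ve$, the non-consecutive interactions, the $O(\delta^{(n+2)/2})$ tails of the projections) negligible, consistently with the non-existence of such towers when $n\le6$. Finally, at leading order the $x$-dependence decouples as $\sum_{i}b_n\,\mu_{i,\ve}^{\,n-2}\,H(x_{i,\ve},x_{i,\ve})$ up to lower-order terms, and I would obtain a critical point with all $x_{i,\ve}\to\xi_0$ by a local-degree (or implicit-function) argument relying on $\xi_0$ being a \emph{non-degenerate} critical point of the Robin function.

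The hard part is the linear theory: obtaining the scale-by-scale pointwise control of $L_\ve^{-1}$, uniformly as the tower collapses. This is precisely what makes it possible to dispense with the symmetry hypothesis of Iacopetti--Vaira \cite{iv} and to track the $k$ centers independently; once it is in place, the energy expansion and the resolution of the triangular system are, modulo the by-now standard bookkeeping for towers, routine. A final verification is that $u_\ve$ is genuinely sign-changing: at the scale of the $i$-th bubble $u_\ve$ is $C^1$-close to $(-1)^{i+1}U_{\mu_{i,\ve},x_{i,\ve}}$, so the solution takes both signs as soon as $k\ge2$.
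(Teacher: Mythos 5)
Your overall strategy matches the paper's: a finite-dimensional Lyapunov--Schmidt reduction built on sharp \emph{pointwise} (rather than merely $H^1$) control of the linearized operator on the orthogonal complement of the approximate kernel, followed by a degree argument in the parameters. You correctly identify the weighted linear theory as the crux, you correctly note that the sign alternation reverses the sign of the consecutive-bubble interaction (which is exactly what allows the balance equation for each $t_\ell$ to admit a positive solution), and the scaling ansatz $\mu_{i,\ve}=d_{i,\ve}\ve^{\gamma_i}$ with the $\gamma_i$ as in \eqref{par_bn} is what the paper uses. You phrase the reduced problem variationally (critical points of $J_\ve(W_\ve+\phi_\ve)$) while the paper expands the kernel projections $\nu_{\ell,j}$ of the equation directly and annihilates them; this is essentially the same thing, though the paper's formulation makes explicit where the pointwise bound $|\phi_\ve|\lesssim\Psi_{\Omega,\ve}$ is used to discard cross-terms (Lemma~\ref{lemmaDL} and its analogue).

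There is, however, one genuine error in your description of the reduced system. You claim that ``at leading order the $x$-dependence decouples as $\sum_{i}b_n\,\mu_{i,\ve}^{\,n-2}\,H(x_{i,\ve},x_{i,\ve})$'' and that $x_{i,\ve}\to\xi_0$ for \emph{all} $i$ follows from the non-degeneracy of the Robin function. This is not the right mechanism, and if you carried the scheme through as written the equations for the upper centers would be degenerate. For $i\ge 2$ the Robin self-energy $\mu_i^{n-2}H(x_i,x_i)$ is of \emph{lower} order than both the linear term $\ve\mu_i^2$ and the consecutive-bubble interaction $\sim(\mu_i/\mu_{i-1})^{(n-2)/2}$: indeed $\mu_1^{n-4}\sim\ve$ and $\mu_i\ll\mu_1$ for $i\ge2$, so $\mu_i^{n-2}/(\ve\mu_i^2)=\mu_i^{n-4}/\ve\to0$. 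Only the lowest bubble sees the Robin function. For $i\ge 2$ the dominant $x_i$-dependence comes from the interaction with bubble $i-1$; parametrizing $x_i = x_{i-1}+\mu_{i-1}z_i$ as in \eqref{point_bn}, the $j$-th kernel projection behaves like $(\mu_i/\mu_{i-1})^{n/2}\partial_j U_0(z_i)$ (Proposition~\ref{propnu_bn}), and setting it to zero forces $z_i\to 0$, i.e.\ the upper centers stack on top of the one below. Only $x_1$ is driven to $\xi_0$ by $\nabla_\xi H(\xi,\xi)=0$. Replacing your final paragraph with this two-tier picture (Robin for $\ell=1$, $\nabla U_0(z_\ell)=0$ for $\ell\ge2$) gives the correct, non-degenerate reduced system, which the paper then solves by showing the limiting zero of $F$ has non-zero degree.
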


The existence of radial towers of bubbles centered at the origin on the ball had been previously obtained by Esposito-Ghoussoub-Pistoia-Vaira \cite{egpv} in the more general context of Hardy-Schr\"odinger equations. Theorem \ref{theoBN} is again a generalisation of this construction to arbitrary non-symmetric domains. For the precise definition of the Robin function we refer to \eqref{defRobin}. As for Theorem \ref{maintheo} the assumptions on $\xi_0$ can be relaxed if a general $h$ is taken into account in the operator $\triangle_\xi - \ve h$, see the arguments at the end of Section \ref{BrezisNirenberg}. The existence of sign-changing towers of bubbles for problem \eqref{brni} is likely to be true only in dimensions $n \ge 7$, and a proof that sign-changing towers of bubbles made of two bubbles cannot appear in dimensions $3$ to $6$ can be found in Ben Ayed-El Medhi-Pacella \cite{bep, bep2} and Iacopetti and Pacella \cite{IacopettiPacella}. Let us also mention an asymptotic description of low-energy sign-changing solutions in dimensions $n \ge 7$ in Iacopetti \cite{iacopetti}.

    \subsection{Strategy of proof and outline of the paper}

 The proofs of Theorems \ref{maintheo} and \ref{theoBN} follow the strategy that was introduced in Premoselli \cite{Premoselli6, Premoselli7} and applied in Premoselli-Thizy \cite{PremoselliThizy}. The blowing-up family $(u_\ve)_\ve$ that we construct  in Theorem \ref{maintheo} has the following form:
 $$ u_{\ve} = \sum_{i=1}^k W_{i,\ve} + \vp_\ve, $$
 where $W_{i,\ve}$ are, at first-order, Riemannian standard bubbles of respective parameters $\mu_{k, \ve}  << \mu_{k-1, \ve} << \dots << \mu_{1,\ve}$ and centered at points $\xi_{i,\ve}$ and $\vp_\ve$ is a remainder. The points $\xi_\ell$ satisfy $d_g(\xi_{\ell-1}, \xi_\ell) = O(\mu_{\ell-1})$ for $2 \le \ell \le k$, they all converge to the same point $\xi_0 \in M$, and the parameters are related according to \eqref{growthmuell} below. The main idea of the present paper is to construct a remainder $\vp_\ve$ satisfiying
\ben \label{estremainder}
\left \Vert \frac{\vp_\ve}{\sum_{i=1}^k W_{i,\ve}} \right \Vert_{L^\infty(M)} = o(1)
\een
(more precise estimates are actually required, see \eqref{estoptvp}). This contrasts with the approach followed in Morabito-Pistoia-Vaira \cite{mpv}: in the highly symmetric setting of \cite{mpv} all the bubbles were centered in the center of symmetry of the manifold and a suitable $\vp_\ve$ was constructed in $H^1$ by splitting it in $k$ distinct remainders with a triangular dependence on the bubble parameters. Since we work here on a general non-symmetric background, where the centers of the bubbles also have to be chosen, the energy estimates of \cite{mpv} fail to capture precisely enough the interactions between the bubbles $W_{i,\ve}$. We thus have to construct a remainder $\vp_\ve$ which comes with sharp global pointwise estimates such as \eqref{estremainder}.

\medskip

Constructing $\vp_\ve$ that satisfies \eqref{estremainder} is the main challenge of this paper. It requires to prove sharp pointwise estimates for solutions of a linear equation (see Proposition \ref{proplin} below). These estimates are obtained by means of a bubble-tree-\emph{like} analysis, which is carried on at a significant precision. Once this linear theory is available, $\vp_\ve$ is constructed \emph{via} a standard fixed-point procedure, which now takes place in strong weighted spaces. This construction of $\vp_\ve$ applies both to positive and sign-changing solutions. The sharp pointwise estimates that we obtain on $\vp_\ve$, combined with the finite-dimensional reduction, also provide another way to obtain \emph{a priori} pointwise blow-up estimates for finite-energy solutions of \eqref{eqY}, similar to the ones in Druet-Hebey-Robert \cite{DruetHebeyRobert} and to the symmetry estimates of Li-Zhang \cite{LiZhang} and Khuri-Marques-Schoen \cite{KhuriMarquesSchoen} (see Section \ref{nonlin}).

\medskip

The outline of the paper is as follows. In Section \ref{notations} we introduce the notations and the bubbling profiles used in the construction. In Section \ref{lin} we prove sharp pointwise estimates for solutions of the linearization of \eqref{eqY}. This is achieved in Proposition \ref{proplin} and is the core of the analysis of the paper. In section \ref{nonlin} we perform the nonlinear construction to  solve \eqref{eqZ} up to kernel elements. In section \ref{expansionkernel} we perform an asymptotic expansion of the coefficients of the kernel elements and conclude the proof of Theorem \ref{maintheo}. Theorem \ref{theoBN} is then proven in  \ref{BrezisNirenberg}. Finally, the Appendix gathers a few technical results used throughout the paper. Although Theorem \ref{maintheo} is proven in the locally conformally flat case, we prove the results in sections \ref{lin} and \ref{nonlin} in the general case, since the analysis carries on and might be applied in future works.  We also take into account a general $h$ as in \eqref{eqY}  since it does not create additional difficulties.

\medskip
\noindent {\bf Acknowledgments:} The author warmly thanks A. Pistoia for many fruitful and motivating discussions during the preparation of this work, and in particular for pointing out the possible application to the Br\'ezis-Nirenberg problem, which led to Theorem \ref{theoBN}. The author was supported by a FNRS CdR grant J.0135.19 and by the Fonds Th\'elam.

\section{Notations and Approximate solutions}  \label{notations}

\subsection{The ansatz.} 

Let $(M,g)$ be a compact Riemannian manifold without boundary of dimension $n \ge 3$. The celebrated conformal normal coordinates result of Lee-Parker \cite{LeeParker} states the existence of a smooth function $\Lambda: M \times M \to (0, + \infty)$ suth that for any $\xi \in M$, the conformal metric $g_{\xi} = \Lambda_{\xi}^{\frac{4}{n-2}}g$ satisfies:
$$ S_{g_{\xi}}(\xi) = 0, \quad \nabla S_{g_{\xi}}(\xi) = 0 \quad \textrm{ and } \triangle_{g_{\xi}} S_{g_{\xi}}(\xi) = \frac16 |W_g(\xi)|_g^2, $$
where we have let $\Lambda_{\xi} = \Lambda(\xi, \cdot)$ and where $S_h$ and $W_h$ denote respectively the scalar curvature and the Weyl tensor of a metric $h$. If $(M,g)$ is locally conformally flat in a neighbourhood of some $\xi \in M$, $g_{\xi}$ is flat in a geodesic ball centered at $\xi$. If $h$ is any metric of positive Yamabe type on $M$ we denote by $G_h$ the Green function of the conformal laplacian $\triangle_h + c_n S_h$. Let $\xi \in M$ and consider the metric $g_{\xi} = \Lambda_{\xi}^{\frac{4}{n-2}} g$. If $(M,g)$ is locally conformally flat in a neighbourhood of $\xi$ the Green function $G_{g_\xi}$ expands as
\ben \label{defmass}
G_{g_{\xi}} \big( \xi, \exp_{\xi}^{g_{\xi}} (y) \big) = \frac{1}{(n-2) \omega_{n-1}} |y|^{2-n} + m(\xi) + O(|y|),
\een
for $|y|$ small, where we denoted by  $\exp^{g_{\xi}}_{\xi}$ the exponential map at $\xi$ for $g_{\xi}$. The quantity $m(\xi)$, which is a smooth function of $\xi$, is called the \emph{mass} at $\xi$. If $(M,g)$ is not conformally flat in a neighbourhood of $\xi \in M$ we have instead, when $n \ge 7$:
\ben \label{nonlcf}
 G_{g_{\xi}} \big( \xi, \exp_{\xi}^{g_{\xi}} (y) \big) = \frac{1}{(n-2) \omega_{n-1}} |y|^{2-n} + O \big( |y|^{6-n} \big). 
 \een
We refer to Lee-Parker \cite{LeeParker} for a proof of these facts.

\medskip

The bubbling profiles we will use in this work  are inspired from the profiles in Esposito-Pistoia-V\'etois \cite{EspositoPistoiaVetois}, with a suitable modification to smoothen them.   Let $0 < r_0  < \frac12 i_g(M)$ be such that
\ben \label{defr0}
2 r_0 <  \frac12 i_{g_{\xi}}(M) \quad \textrm{ for any } \xi \in M 
 \een
and let $\chi: \RR_+ \to \RR_+ $ be a smooth compactly supported function satisfying $\chi(r) = r$ for $r \le r_0$ and $\chi(r) = 2r_0$ for $r \ge 2 r_0$. If $\xi \in M$ and $\mu >0$, we define the bubble  $W_{\mu, \xi}$ as:
\ben \label{defW}
\begin{aligned}
W_{\mu, \xi}(x) &  = \Phi(\xi,x)   \frac{ \mu^{\pui}}{ \Big( \mu^2 + \frac{\chi \big( d_{g_{\xi}}(\xi,x)^2\big)}{n(n-2)}\Big)^{\pui}},
\end{aligned}
\een
where $d_{g_{\xi}}$ is the geodesic distance for the metric $g_{\xi}$ and where we have let, for all $\xi,x \in M$:
\ben \label{defPhi}
\Phi(\xi, x) = (n-2) \omega_{n-1} G_{g_{\xi}}(\xi, x) \Lambda_{\xi}(x) \chi \big( d_{g_{\xi}}(\xi, x)\big)^{n-2}.
\een
This ansatz satisfies in particular:
\be
\bal
W_{\mu, \xi} (x) = \Phi(\xi,x)
 \mu^{\pui} \Bigg( \mu^2 + \frac{d_{g_{\xi}}(\xi,x)^2}{n(n-2)}\Bigg)^{1 - \frac{n}{2}} & \textrm{ if }  d_{g_{\xi}}(\xi, x) \le r_0. \\
\eal
\ee
As shown in \cite{EspositoPistoiaVetois}, $W_{\mu, \xi}$ satisfies:
\ben \label{lapbulle}
\bal
\triangle_g W_{\mu, \xi}  + c_n S_g &  W_{\mu, \xi} - W_{\mu, \xi}^{2^*-1} =  
&
 \left \{ 
 \bal &  \frac{O\big( \mu^{\frac{n+2}{2}} \big)}{\big( \mu^2 + \frac{d_{g_{\xi}}(\xi,x)^2}{n(n-2)} \big)^2}  & (M,g) \textrm{ is l.c.f}\\
& \frac{O\big( \mu^{\frac{n+2}{2}} \big)}{\big( \mu^2 + \frac{d_{g_{\xi}}(\xi,x)^2}{n(n-2)} \big)^\pui} & \textrm{ otherwise },\\
\eal \right.
\eal
\een
where ``l.c.f'' means here \emph{locally conformally flat in a a neighbourhood of $\xi$}. We will use the same abbreviation throughout the paper, and there should be no confusion since our analysis will take place in a neighbourhood of a point $\xi_1 \in M$. In \eqref{lapbulle} we also used the following notation: 
$$ A = O(B) \textrm{ or, equivalently, } A \lesssim B $$
if there exists a constant $C >0$ independent of $\ve$ such that $|A| \le C |B|$. Similarly, we will write $A = o(B)$ if $|A||B|^{-1}$ goes to $0$ as $\ve \to 0$.

\subsection{The parameters.} We introduce the parameters of our construction. Let $A>1$ be a large positive constant, $k \ge 1$ be an integer.
Let $(t_i)_{1 \le i \le k} \in [A^{-1}, A]^k$ and $\ve  > 0$. For any $1 \le \ell \le k$ define:
$$ \mu_{\ell}  = \mu_{\ell, \ve}(t_{\ell}) =   \ve^{\gamma_{\ell}} t_{\ell}, $$
where
\ben \label{defmu}
  \gamma_{\ell} = \left \{ 
 \bal
&  \frac{n-2}{2(n-4)} \left( \frac{n-2}{n-6} \right)^{\ell-1} - \frac12 & (M,g) \textrm{ is l.c.f} \\
& \left( \frac{n-2}{n-6} \right)^{\ell-1} - \frac12 & \textrm{ otherwise }
 \eal \right. 
 \een
With this definition it is easily seen that we have 
$$ \ve^{1 + 2 \gamma_\ell} = \ve^{\pui(\gamma_\ell-\gamma_{\ell-1})}. $$
In particular, for fixed values of $(t_i)_{1 \le i \le k} \in [A^{-1}, A]^k$, the $\mu_{\ell}$ satisfy:
 \ben \label{growthmuell}
 \mu_1 \sim  \left \{ \bal
& \ve^{\frac{1}{n-4}} & (M,g) \textrm{ l.c.f} \\
& \ve^{\frac12}  & \textrm{ otherwise }
 \eal \right. 
 \quad \textrm{ and } \quad \ve \mu_{\ell+1}^2 \sim \left( \frac{\mu_{\ell+1}}{\mu_{\ell}} \right)^{\frac{n-2}{2}}
 \een
for any $1 \le \ell \le k-1$, where $\sim$ denotes the order of growth in $\ve$ as $\ve \to 0$. 

\medskip

Denote by $B(0,1)$ the open unit ball in $\RR^n$ and define
\ben \label{defA}
S_k =  [A^{-1}, A] \times M \times  \Big(  [A^{-1}, A] \times B(0,1) \Big)^{k-1}.
\een
Let $\big(t_1, \xi_1, (t_i, z_i)_{2 \le i \le k} \big) \in S_k$. For any $\ve >0$, we define the points $(\xi_2, \dots, \xi_k)$ by:
\ben \label{defxi}
 \textrm{for any } 2 \le \ell \le k, \quad \xi_{\ell}  = \exp^{g_{\xi_{\ell-1}}}_{\xi_{\ell-1}} \big( \mu_{\ell-1} z_{\ell} \big) .
\een
Recall that for $\xi \in M$, $\exp^{g_{\xi}}_{\xi}$ is the exponential map at $\xi$ for the metric $g_{\xi} = \Lambda_{\xi}^{\frac{4}{n-2}}$. In \eqref{defxi} we omitted the dependence on the parameters for the sake of clarity but keep in mind that we have:
$$ \xi_2 = \xi_{2,\ve}(t_1, z_2) \quad, \quad \dots \quad, \quad \xi_{\ell} = \xi_{\ell, \ve}(t_{\ell-1}, z_{\ell}). $$
For $1 \le \ell \le k $ we will let 
\ben \label{defWi}
 W_{\ell}(x) = W_{\mu_{\ell}, \xi_{\ell}}(x)
 \een
 where $W_{\mu_{\ell},\xi_{\ell}}$ is as in \eqref{defW}. By definition $W_{\ell}$ only depends on the parameters $(t_{\ell-1}, t_{\ell}, z_{\ell})$ when $\ell \ge 2$, and only on $(t_1, \xi_1)$ when $\ell=1$.  In this setting $W_k$ denotes the highest bubble (the one that concentrates faster) while $W_1$ is the lowest one. In the course of the proof we will need to analyse the mutual pointwise interactions between these bubbles. Following the notion of radius of influence introduced in Druet \cite{DruetJDG} (see also Hebey \cite{HebeyZLAM}, Chapter 8) We define for this the following sets:
 \ben  \label{defBi}
 B_1 = M , B_{k+1} = \emptyset \quad \textrm{ and } \quad  B_{\ell} = B_{g_{\xi_{\ell}}} \big(\xi_\ell, \sqrt{\mu_\ell \mu_{\ell-1}} \big) \textrm{ for } 2 \le \ell \le k,
 \een
 where $B_{g_{\xi_\ell}}$ denotes the geodesic ball for the metric $g_{\xi_\ell}$. For any family $\big(t_{1,\ve}, \xi_{1,\ve}, (t_{i,\ve}, z_{i,\ve})_{2 \le i \le k} \big)$ of elements of $S_k$ it is easily checked by \eqref{defxi} that, for $\ve$ small enough, the balls $B_\ell$ are nested:
 $$ B_k \subset B_{k-1} \subset \dots \subset B_1 .$$
It is also easily checked that, for any $1 \le \ell \le k$,
\ben \label{rapportsbulles}
 W_\ell \lesssim W_{\ell-1} \textrm{ in } B_{\ell-1} \backslash B_\ell \textrm{ and }  W_{\ell-1} \lesssim W_{\ell} \textrm{ in } B_\ell. 
 \een
In particular, for any $1 \le \ell \le k$ there holds
$$\sum_{j=1}^k W_j \lesssim W_\ell \quad \textrm{ on } B_\ell \backslash B_{\ell+1}. $$

\subsection{Kernel elements.}

Let $h \in C^0(M)$ and $\ve >0$. We will work with the following $H^1$ scalar product in $M$:
\ben  \label{psH1}
\langle u, v \rangle = \int_M \Bigg( \langle \nabla u, \nabla v \rangle + (c_n S_g + \ve h) uv \Bigg) dv_g.
\een
Let $1  \le \ell \le k$, $ 0 \le j \le n$ and let $\big(t_1, \xi_1, (t_i, z_i)_{2 \le i \le k} \big) \in S_k$. The approximate kernel elements associated to the $\ell$-th bubble on the manifold are defined by:
\ben \label{defZi}
\begin{aligned}
Z_{\ell,0}(x) & = Z_{\ell,0,\mu_\ell,\xi_\ell}(x)  \\
& =\Lambda_{\xi_\ell}(x) \chi \big( d_{g_{\xi_{\ell}}}(\xi_{\ell}, x)\big) \\
&\times  \mu_\ell^{\frac{n-2}{2}} \left( \frac{d_{g_{\xi_\ell}}(\xi_\ell,x)^2}{n(n-2)} - \mu_\ell^2 \right)  \left(  \mu_\ell^2 +   \frac{d_{g_{\xi_\ell}}(\xi_\ell,x)^2}{n(n-2)} \right)^{- \frac{n}{2}} , \\
 Z_{\ell,j}(x) & = Z_{\ell,j,\mu_\ell,\xi_\ell}(x)    \\
 & =
 \Lambda_{\xi_\ell}(x)  \chi \big( d_{g_{\xi_{\ell}}}(\xi_{\ell}, x)\big)  \\
 & \times \mu_\ell^{\frac{n}{2}}  \left \langle \left( \textrm{exp}_{\xi_\ell}^{g_{\xi_\ell}}\right)^{-1}(x), e_j(\xi_\ell) \right\rangle_{g_{\xi_\ell}(\xi_\ell)}\left(  \mu_\ell^2 + \frac{d_{g_{\xi_\ell}}(\xi_\ell,x)^2}{n(n-2)} \right)^{- \frac{n}{2}}. \\
\end{aligned}
\een
In the previous expression $(e_j)$ is a local orthonormal basis defined in $B_g(\xi_1, 2r_0)$. It is always well-defined and depends smoothly on the choice of the point since we can always choose a covering of parallel-type on $M$, see \cite{RobertVetois}. Let also
\ben \label{defB0}
U_0(x) = \left( 1 + \frac{|x|^2}{n(n-2)} \right)^{1 - \frac{n}{2}} \quad \textrm{ for any } x \in \RR^n
\een
and, for $1 \le j \le n$ and $x \in \RR^n$,
\ben \label{defVi}
 V_0(x) = \frac{\frac{|x|^2}{n(n-2)} -1}{\left( 1 + \frac{|x|^2}{n(n-2)}\right)^{\frac{n}{2}}} \quad \textrm{ and }  \quad V_j(x) = x_j \left( 1 + \frac{|x|^2}{n(n-2)}\right)^{-\frac{n}{2}}\quad .
 \een
For $1 \le \ell \le k$ we will let, for $x \in M$:
\ben \label{defti}
\theta_\ell(x) = \mu_\ell + d_{g_{\xi_\ell}}(\xi_\ell, x).
\een 
For $1 \le \ell \le k$ the functions $Z_{\ell,j}$ in \eqref{defZi} satisfy the following equations in $M$:
\ben \label{lapZ0}
\bal
\triangle_g  Z_{\ell,0}+ c_n S_g Z_{\ell,0} = &(2^*-1) W_\ell^{2^*-2} Z_{\ell,0} +
 \left \{ 
 \bal &  O(\mu_\ell^{\pui})  & (M,g) \textrm{ is l.c.f}\\
& O\big( \mu_\ell^2 W_{\ell} \big)  & \textrm{ otherwise },\\
\eal \right.
\eal 
\een
and, for $1 \le j \le n$,
\ben \label{lapZi}
\triangle_g  Z_{\ell,j}+ c_n S_g Z_{\ell,j} = (2^*-1) W_\ell^{2^*-2} Z_{\ell,j} +  \left \{ 
 \bal &  O( \mu_\ell^{\frac{n}{2}} ) & \textrm{ if } (M,g) \textrm{ is l.c.f}\\
&  O \big( \mu_\ell W_{\ell} \big)  & \textrm{ otherwise },\\
\eal \right.
\een
Explicit computations yield, for $1 \le i < j \le k$ and $0 \le p,q  \le n$:
\ben \label{psZi}
\langle Z_{i,p}, Z_{i,q} \rangle = \Vert \nabla V_p \Vert_{L^2(\RR^n)}^2 \delta_{p q} + O( \mu_i^2 ) 
\een
and 
\ben \label{psZik}
\bal
\langle Z_{i,p}, Z_{j,q} \rangle =  & O \left(  \left(\frac{\mu_j}{\mu_i}\right)^{\pui} \right).
\eal 
\een
The proofs of \eqref{lapZ0} and \eqref{lapZi} can be found in \cite{EspositoPistoiaVetois} (Lemma $6.3$) while we refer to the Appendix of this paper for the proof of \eqref{psZi} and \eqref{psZik}. We finally let, for $1 \le \ell \le k$:
\ben \label{defKi}
K_{\ell} = \textrm{Span} \big\{ Z_{i,j}, 1 \le i \le \ell, 0 \le j \le n \big\},
\een
and we denote by $K_{\ell}^{\perp}$ its orthogonal with respect to the scalar product \eqref{psH1}.

\section{Sharp pointwise asymptotics for the linear problem}  \label{lin}

\subsection{Statement of the result}

Let $k \ge 1$ be an integer and, for any $0 < \ve \le 1$, let $\big(t_{1,\ve}, \xi_{1,\ve}, (t_{i,\ve}, z_{i,\ve})_{2 \le i \le k} \big) \in S_k$ be a family of parameters, where $S_k$ is as in \eqref{defA}. For the sake of clarity we will drop the dependence in $\ve$ and, until the end of the paper,  we will always denote by $W_1, \dots, W_k$ the bubbles obtained as in \eqref{defWi} with $\mu_i$ and $\xi_i$ given by \eqref{defmu} and \eqref{defxi} for this choice of the family $(t_{1,\ve}, \xi_{1,\ve}), (t_{i,\ve}, z_{i,\ve})_{2 \le i \le k}$. In what follows the notation $f(u)$ will either denote $f(u) = |u|^{2^*-2}u$ or $f(u) = (u_+)^{2^*-1}$, where $u_+ = \max(u,0)$. We recall, as a starting point, the well-known linear theory in $H^1(M)$ for \eqref{eqY}:

\begin{prop} \label{proplinbase}
Let $k \ge 1$ be an integer. We fix $(\kappa_i)_{1 \le i \le n} \in \{-1,1 \}^k$. There exists $\ve_0 > 0$ such that the following holds: for any $\ve \in (0, \ve_0)$ and $\big(t_{1,\ve}, \xi_{1,\ve}, (t_{i,\ve}, z_{i,\ve})_{2 \le i \le k} \big)\in S_k$, $h \in C^0(M)$ and for any $k_{\ve} \in L^{\frac{2n}{n+2}}(M)$, there exists a unique $\vp_{\ve} \in K_k^{\perp}$ satisfying:
\ben \label{eqlinbase}
\Pi_{K_k^{\perp}}\Big( \vp_{\ve} - (\triangle_g + c_n S_g + \ve h)^{-1} \Big[    (2^*-1) f'\big(  \sum_{i=1}^k \kappa_i  W_i \big) \vp_{\ve} + k_{\ve}  \Big]\Big) = 0.
\een
Here $\Pi_{K_k^{\perp}}$ denotes the orthogonal projection on $K_k^{\perp}$ for \eqref{psH1}, and $K_k$ is as in \eqref{defKi}. This function satisfies in addition 
\ben \label{estH1lin}
 \Vert \vp_{\ve} \Vert_{H^1(M)} \lesssim \Vert  k_{\ve} \Vert_{L^{\frac{2n}{n+2}}}.
 \een
\end{prop}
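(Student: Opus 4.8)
The plan is to prove Proposition~\ref{proplinbase} by the standard Fredholm-theoretic argument for linearized Yamabe-type operators, exploiting the well-understood nondegeneracy of the standard bubble $U_0$. The statement asserts that the map
\[
T_\ve : \vp \longmapsto \Pi_{K_k^\perp} \Big( \vp - (\triangle_g + c_n S_g + \ve h)^{-1} \big[ (2^*-1) f'( \textstyle\sum_i \kappa_i W_i ) \vp \big] \Big)
\]
is invertible on $K_k^\perp$ with uniformly bounded inverse, and that \eqref{eqlinbase} is $T_\ve \vp_\ve = \Pi_{K_k^\perp} (\triangle_g + c_n S_g + \ve h)^{-1} k_\ve$. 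First I would observe that, since $(M,g)$ is of positive Yamabe type, $\triangle_g + c_n S_g$ is coercive (and for $\ve$ small so is $\triangle_g + c_n S_g + \ve h$), so \eqref{psH1} is a genuine scalar product and $(\triangle_g + c_n S_g + \ve h)^{-1}: L^{\frac{2n}{n+2}}(M) \to H^1(M)$ is bounded, with the embedding $H^1(M) \hookrightarrow L^{2^*}(M)$ continuous. Since $f'(\sum_i \kappa_i W_i) \in L^{n/2}(M)$ with norm controlled uniformly in $\ve$, the multiplication operator $\vp \mapsto f'(\sum_i \kappa_i W_i)\vp$ maps $L^{2^*} \to L^{\frac{2n}{n+2}}$ and, composed with the inverse of the conformal Laplacian and (by Rellich) the compact embedding $H^1 \hookrightarrow L^{2^*}$, is a compact operator on $H^1(M)$. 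Hence $T_\ve = \mathrm{Id} - (\text{compact})$ restricted to $K_k^\perp$, and by Fredholm alternative invertibility is equivalent to injectivity.

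The heart of the proof is thus the uniform a priori estimate: there is $C>0$ and $\ve_0>0$ such that for all $\ve \in (0,\ve_0)$, all parameters in $S_k$, and all $\vp \in K_k^\perp$,
\[
\Vert \vp \Vert_{H^1(M)} \le C \Vert k_\ve \Vert_{L^{\frac{2n}{n+2}}(M)}
\]
whenever \eqref{eqlinbase} holds; this simultaneously gives injectivity (take $k_\ve = 0$) and the bound \eqref{estH1lin}. I would prove it by contradiction and blow-up: suppose there are sequences $\ve_\alpha \to 0$, parameters in $S_k$, $k_\alpha$, and $\vp_\alpha \in K_k^\perp$ with $\Vert \vp_\alpha \Vert_{H^1} = 1$ and $\Vert k_\alpha \Vert_{L^{\frac{2n}{n+2}}} \to 0$. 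Testing \eqref{eqlinbase} against $\vp_\alpha$ and using $\vp_\alpha \in K_k^\perp$ gives
\[
1 = \Vert \vp_\alpha \Vert^2 = (2^*-1)\int_M f'\Big(\textstyle\sum_i \kappa_i W_i\Big) \vp_\alpha^2 \, dv_g + \int_M k_\alpha \vp_\alpha \, dv_g + o(1),
\]
so the interaction term stays bounded away from $0$; this forces concentration. For each bubble index $i$, rescale: set $\hat\vp_{i,\alpha}(y) = \mu_{i}^{\frac{n-2}{2}} \vp_\alpha\big(\exp^{g_{\xi_i}}_{\xi_i}(\mu_i y)\big) \Lambda_{\xi_i}(\cdots)^{-1}$ (the natural conformal rescaling centered at $\xi_i$ at scale $\mu_i$). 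Standard elliptic estimates on the rescaled equation show $\hat\vp_{i,\alpha} \rightharpoonup \hat\vp_i$ in $\mathcal D^{1,2}(\RR^n)$, where $\hat\vp_i$ solves the limiting linearized equation $\triangle_\xi \hat\vp_i = (2^*-1)|U_0|^{2^*-2}\hat\vp_i$ (or $(2^*-1)(U_0)_+^{2^*-2}\hat\vp_i = (2^*-1)U_0^{2^*-2}\hat\vp_i$ in the sign-changing $f$-case, which is the same since $U_0 > 0$). By the classical nondegeneracy result (Bianchi–Egnell, Rey), $\hat\vp_i \in \mathrm{Span}\{V_0, V_1, \dots, V_n\}$. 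But the orthogonality conditions $\langle \vp_\alpha, Z_{i,j}\rangle = 0$ pass to the limit — here one uses the separation of scales $\mu_k \ll \dots \ll \mu_1$ (encoded in \eqref{growthmuell}) together with \eqref{psZi}–\eqref{psZik} to check that the cross-interactions between different scales are negligible — and force $\hat\vp_i \equiv 0$ for every $i$.

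Once every weak limit vanishes, the final step is to recover $\Vert \vp_\alpha \Vert_{H^1} \to 0$, contradicting $\Vert\vp_\alpha\Vert_{H^1}=1$. This is done by establishing that $\int_M f'(\sum_i \kappa_i W_i)\vp_\alpha^2\, dv_g \to 0$: outside a fixed neighborhood of $\xi_1$ the potential $f'(\sum_i \kappa_i W_i)$ is uniformly small, while inside one decomposes the integral over the annular regions $B_\ell \setminus B_{\ell+1}$ from \eqref{defBi}, on each of which $\sum_j W_j \lesssim W_\ell$ by \eqref{rapportsbulles}, and applies the rescaling above together with the local strong $L^2_{loc}$ convergence (after rescaling) to $\hat\vp_\ell = 0$; a cutoff/exhaustion argument in $\RR^n$ handles the tails, using that the $L^{2^*}$ mass of $\vp_\alpha$ escaping to spatial infinity in rescaled coordinates carries no potential weight. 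Then the tested equation gives $1 = o(1) + o(1)$, the desired contradiction. I expect the main obstacle to be precisely this multi-scale bookkeeping — making rigorous that the $k$ different blow-up scales do not interfere, i.e. that the rescaled limit at scale $\mu_i$ genuinely only sees the $i$-th bubble and that the orthogonality to $K_k$ decouples scale by scale; this is exactly where the careful choice of exponents $\gamma_\ell$ in \eqref{defmu} and the estimates \eqref{growthmuell}, \eqref{rapportsbulles}, \eqref{psZik} are needed. Everything else is a by-now-routine adaptation of the single-bubble linear theory (as in, e.g., Esposito–Pistoia–V\'etois or Robert–V\'etois), and the passage from the a priori estimate to existence and uniqueness is the Fredholm alternative as above.
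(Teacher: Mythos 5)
The paper gives no proof of Proposition~\ref{proplinbase}, deferring to Robert--V\'etois, Esposito--Pistoia--V\'etois, and Musso--Pistoia; your outline faithfully reproduces the standard argument used in those references --- Fredholm reduction of existence to a uniform a priori estimate, proof of that estimate by contradiction via blow-up and rescaling at each scale $\mu_i$, identification of the rescaled weak limits through the nondegeneracy of $U_0$ (Rey, Bianchi--Egnell), and elimination of those limits through the orthogonality to $K_k$, with \eqref{psZi}--\eqref{psZik} and \eqref{growthmuell} ensuring the $k$ scales decouple. The technical points you explicitly flag as needing care (multi-scale decoupling, negligibility of cross-orthogonality, and the tail/cutoff argument for $\int f'(\sum_i \kappa_i W_i)\vp_\alpha^2 \to 0$) are precisely the substantive verifications, and your sketch of them is correct.
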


\begin{proof}
This is a by now very classical result. We refer for instance to Robert-V\'etois \cite{RobertVetois}, Esposito-Pistoia-V\'etois \cite{EspositoPistoiaVetois} or Musso-Pistoia \cite{MussoPistoia2} for a proof. 
\end{proof}

Equation \eqref{eqlinbase} implies the existence of real numbers $\lambda_{ij} =\lambda_{ij, \ve} $, with $1 \le i \le k$ and $0 \le j \le n$, such that 
\ben \label{eqLvp}
\bal
 \triangle_g \vp_{\ve}  + (c_n S_g + \ve h) \vp_{\ve} - & f'\Big(  \sum_{i=1}^k \kappa_i W_i \Big) \vp_{\ve} = k_{\ve} \\
 & + \sum \limits_{\underset{j =0 .. n }{i=1..k}} \lambda_{ij} ( \triangle_g   + c_n S_g + \ve h)Z_{i,j}, 
 \eal
 \een
where $Z_{i,j}$ is as in \eqref{defZi}. By standard elliptic theory, if $k_{\ve} \in C^0(M)$, $\vp_{ \ve} \in C^1(M)$. Let $h \in C^0(M)$ be fixed and assume that $\ve >0$ is  small enough so that  $\triangle_g   + c_n S_g + \ve h$ is coercive. For any fixed $\ve$ and for any $k_\ve \in C^0(M)$
we denote the unique function $\vp_{\ve}$ given by Proposition \ref{proplinbase} by
\ben \label{defL}
\vp_{\ve} := \mathcal{L}_{\ve}(k_\ve).
\een 
We investigate in this section equation \eqref{eqLvp} in more detail. Assuming that a suitable pointwise control holds on $k_{\ve}$ we prove refined pointwise estimates on $\vp_{\ve}$. The pointwise control we will assume on $k_{\ve}$ is the following:
 \ben \label{hypf}
\bal
|k_{\ve}|  \lesssim \sum_{i=1}^k &  \left \{ 
\bal 
& \mu_i^{\frac{n+2}{2}} \theta_i^{-4}   &   \textrm{ If } (M,g) \textrm{ is } & \textrm{ l.c.f. around } \xi_1 \\
& \ve W_i  & \textrm{ If } (M,g) \textrm{ is } & \textrm{ not l.c.f. }
\eal \right \} \\
& +  \left \{ 
\bal 
& W_k^{2^*-2} W_{k-1} &  \textrm{ in } B_k \\  
& \vdots \\
&W_{i}^{2^*-2} \big( W_{i+1} + W_{i-1} \big) & \textrm{ in } B_i \backslash B_{i+1}, \\    \\ 
& \vdots  \\ 
& W_1^{2^*-2} W_2 & \textrm{ in } B_1  \backslash B_2  = M \backslash B_2 
 \eal \right \} ,
 \eal
 \een 
 where in \eqref{hypf} the index $i$ in the right-hand side runs from $2$ to $k-1$ and where $\theta_i$ is defined in \eqref{defti}. It is easily seen, using \eqref{lapbulle} and \eqref{rapportsbulles}, that \eqref{hypf} is satisfied for the following particular choice of $k_{\ve}$:
 \ben \label{ferr}
  k_{\ve} = \big(\triangle_g + c_n S_g + \ve h\big) \Big( \sum_{i=1}^k \kappa_i W_i \Big) - f \Big( \sum_{i=1}^k \kappa_i W_i \Big), 
  \een 
  where we choose $f(u) = (u_+)^{2^*-1}$ if all the $\kappa_i$ are equal to $1$ and  $f(u) = |u|^{2^*-2}u$ otherwise. 
 
 \medskip
 
 The following proposition is the main result of this section and the core of the analysis of this paper. 

\begin{prop} \label{proplin}
Let $k \ge 1$ be an integer, $(\kappa_i)_{1 \le i \le n} \in \{-1,1 \}^k$ and let $\ve_0 >0$ be as in Proposition \ref{proplinbase}. For any $\ve \in (0, \ve_0)$ let $\big(t_{1,\ve}, \xi_{1,\ve}, (t_{i,\ve}, z_{i,\ve})_{2 \le i \le k} \big)\in S_k$,  $k_{\ve} \in C^0(M)$ and let $\vp_{\ve} \in K_k^{\perp}$ be the unique solution of 
\ben \label{eqlin}
\bal
 \triangle_g \vp_{\ve}  + (c_n S_g + \ve h) \vp_{\ve} -  & f'\big(  \sum_{i=1}^k \kappa_i W_i \big) \vp_{\ve} \\
 & = k_{\ve} + \sum \limits_{\underset{j =0 .. n }{i=1..k}}    \lambda_{ij} ( \triangle_g   + c_n S_g + \ve h)Z_{i,j}, 
\eal
 \een
 where the $\lambda_{ij}$ are uniquely defined by \eqref{eqlinbase} and $f(u) = |u|^{2^*-2}u$ or $f(u) = (u_+)^{2^*-1}$. Assume that $k_{\ve}$ satisfies \eqref{hypf}. Then:
\begin{itemize}
\item For any $1 \le \ell \le k$,
\ben \label{vpl}
  \sum_{j =0 .. n }  |  \lambda_{{\ell} j}| \lesssim \ve \mu_\ell^2. 
  \een

\item For any $2 \le \ell \le k$, and any $x \in B_{\ell} \backslash B_{\ell+1}$:
\ben \label{estvpi}
\bal
 |\vp_{\ve}(x)| & \lesssim \mu_{\ell-1}^{1 - \frac{n}{2}} \left( \frac{\mu_{\ell}}{\theta_{\ell}(x)} \right)^2 + \ve \mu_{\ell-1}^{3 - \frac{n}{2}}  + \frac{\theta_{\ell+1}(x)^2}{\mu_{\ell}^2} W_{\ell+1}(x) \mathds{1}_{\{ \theta_{\ell+1}(x) \le \mu_{\ell}\}}, 
  \eal
\een
where the notation $\mathds{1}_{\{ \theta_{\ell+1}(x) \le \mu_{\ell}\}}$ indicates that this term vanishes when $\theta_{\ell+1}(x) > \mu_{\ell}$, and with the convention that $B_{k+1} = \emptyset$ and $W_{k+1} = 0$.
\item For any $x \in B_1 \backslash B_2 = M \backslash B_2$:
\ben \label{estvp1}
\bal
|\vp_{\ve}(x)|  \lesssim &  \left \{ \bal
&  \frac{\mu_{1}^{\frac{n+2}{2}}}{\theta_1(x)^2} & \textrm{ if } (M,g) \textrm{ is l.c.f. around } \xi_1, \\
& \frac{\mu_1^{\frac{n+2}{2}}}{\theta_1(x)^{n-4}} &  \textrm{ if } (M,g) \textrm{ is not l.c.f.}  \\
\eal \right \}  \\
& + \frac{\theta_{2}(x)^2}{\mu_{1}^2} W_{2}(x) \mathds{1}_{\{ \theta_{2}(x) \le \mu_{1}\}} .
\eal
\een
 \end{itemize}
The constants appearing in \eqref{vpl}, \eqref{estvpi}, \eqref{estvp1} are independent of $\ve$ and of the choices of $\big( t_1, \xi_1, (t_{i,\ve}, z_{i,\ve})_{2 \le i \le k} \big) \in S_k$, $U_1, \dots, U_k$ and $k_{\ve}$.
\end{prop}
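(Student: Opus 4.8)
\textbf{Proof strategy for Proposition \ref{proplin}.}

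The plan is to argue by contradiction and rescaling, which is the standard route for sharp pointwise estimates of this type (following Druet--Hebey--Robert \cite{DruetHebeyRobert} and the bubble-tree analysis in Druet \cite{DruetJDG}). First I would reduce \eqref{eqlin} to a statement about the normalized quantity. Define the weight function
\[
W_\ve(x) = \sum_{\ell=2}^{k} \mu_{\ell-1}^{1-\frac n2}\Big(\frac{\mu_\ell}{\theta_\ell(x)}\Big)^2 \mathds{1}_{B_\ell\setminus B_{\ell+1}} + \Big(\text{l.c.f. or not}\Big)\frac{\mu_1^{\frac{n+2}{2}}}{\theta_1^{2 \text{ or } n-4}}\mathds{1}_{M\setminus B_2},
\]
plus the $\mathds{1}_{\{\theta_{\ell+1}\le\mu_\ell\}}$-correction terms, so that the conclusion is exactly $|\vp_\ve|\lesssim W_\ve$ on all of $M$. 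Supposing this fails, one gets a sequence $\ve\to 0$ with points $y_\ve$ realizing the ratio $\|\vp_\ve/W_\ve\|_\infty =: \beta_\ve \to \infty$ (after first disposing of the $\lambda_{ij}$ terms via \eqref{vpl}, which itself is obtained by testing \eqref{eqlin} against $Z_{\ell,j}$, using the near-orthogonality estimates \eqref{psZi}--\eqref{psZik}, the equations \eqref{lapZ0}--\eqref{lapZi} for $Z_{\ell,j}$, the pointwise control \eqref{hypf} on $k_\ve$, and the $H^1$ bound \eqref{estH1lin} fed into weighted $L^\infty$ control; I would prove \eqref{vpl} first as a self-contained lemma since the rest depends on it). Set $\psi_\ve = \vp_\ve/\beta_\ve$, so $\|\psi_\ve/W_\ve\|_\infty = 1$ and $\psi_\ve$ solves \eqref{eqlin} with right-hand side divided by $\beta_\ve$, in particular with a source that is $o(W_\ve)$ in the appropriate weighted sense.

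The core is then a region-by-region blow-up analysis. One rescales around each bubble scale: in the region $B_\ell\setminus B_{\ell+1}$ one sets $\hat\psi_\ell(X) = \mu_\ell^{\frac{n-2}{2}}\psi_\ve\big(\exp^{g_{\xi_\ell}}_{\xi_\ell}(\mu_\ell X)\big)$ (with the analogous $\mu_1$-independent normalization near $\xi_1$), and checks that $\hat\psi_\ell$ is bounded on compact subsets of the rescaled annulus by construction of $W_\ve$, satisfies in the limit the linearized equation $\triangle_\xi \Psi = (2^*-1)U_0^{2^*-2}\Psi$ on $\RR^n$ (resp. on $\RR^n\setminus\{0\}$ if $\ell\ge 2$), and — crucially — inherits the orthogonality $\psi_\ve\in K_k^\perp$, which passes to the limit as orthogonality of $\Psi$ to $V_0,\dots,V_n$ (the nondegeneracy of the bubble, Bianchi--Egnell). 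Hence $\Psi\equiv 0$ in each region, so the rescaled solutions converge to $0$ locally uniformly. To promote this local vanishing to the global estimate $|\psi_\ve|\le \tfrac12 W_\ve$ for large $\ve$ — contradicting $\|\psi_\ve/W_\ve\|_\infty=1$ — I would use the Green's representation formula for $\triangle_g + c_nS_g+\ve h$: writing $\psi_\ve(x) = \int_M G(x,y)\big[f'(\sum\kappa_i W_i)\psi_\ve + (\text{source})\big](y)\,dv_g(y)$, one splits the integral over the regions $B_j\setminus B_{j+1}$, bounds $|\psi_\ve|\le W_\ve$ in each, and checks that each piece $\int G(x,y)\,W_j^{2^*-2}(y)W_\ve(y)\,dv_g(y)\lesssim o(1)\,W_\ve(x)$ uniformly in $x$ — this is where the precise exponents in \eqref{defmu}--\eqref{growthmuell} and the definition \eqref{defBi} of the radii of influence $\sqrt{\mu_\ell\mu_{\ell-1}}$ are used to make the geometric-series-type bounds close. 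The terms with $\mathds{1}_{\{\theta_{\ell+1}\le\mu_\ell\}}$ account for the ``overlap zone'' just outside the next bubble, where $W_{\ell+1}$ is still large relative to the $\mu_\ell^{1-n/2}$ plateau, and must be carried through the Green's function estimates explicitly.

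\textbf{Main obstacle.} The genuinely hard part is the Green's representation step when the bubbles are \emph{not} centered at a common point — i.e. precisely the generality over \cite{mpv}. Controlling $\int_M G(x,y)\,f'(\sum\kappa_i W_i)(y)\,|\psi_\ve(y)|\,dv_g(y)$ region by region requires uniform-in-$\ve$ estimates on integrals of the form $\int G(x,y)\theta_j(y)^{-p}\,dv_g(y)$ where the centers $\xi_j$ drift at scale $\mu_{j-1}$ relative to $\xi_{j-1}$; one must verify that this drift (encoded by $z_j\in B(0,1)$) does not destroy the nesting $B_k\subset\cdots\subset B_1$ nor the comparison \eqref{rapportsbulles}, and that the cross-terms between non-consecutive bubbles (which \eqref{psZik} shows are $O((\mu_j/\mu_i)^{(n-2)/2})$ at the level of kernel elements) remain negligible pointwise. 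I expect this to require a delicate induction on $\ell$ from the innermost region $B_k$ outward, at each stage using the already-established bound in $B_{\ell+1}$ as input, together with the maximum principle / De Giorgi--Nash--Moser on the annular region $B_\ell\setminus B_{\ell+1}$ to upgrade the integral bound to the stated pointwise one. The appearance of the exponent $6-n$ in \eqref{nonlcf} versus the flat case is what forces the dimension restriction $n\ge 7$ and the two separate forms of \eqref{hypf}, \eqref{estvpi}, \eqref{estvp1}; in the l.c.f. case the cleaner Green's function \eqref{defmass} is what allows the sharper $\theta_1^{-2}$ decay and ultimately the extraction of the mass $m(\xi_0)$ in the reduced energy.
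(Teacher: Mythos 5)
Your overall architecture — region-by-region analysis anchored at the innermost bubble $B_k$ and propagated outward, combined with a Green's representation to upgrade local bounds to the global estimate — matches the paper's strategy in broad outline, and your identification of the radii $\sqrt{\mu_\ell\mu_{\ell-1}}$, the orthogonality $\vp_\ve\in K_k^\perp$, and the role of \eqref{growthmuell} as the load-bearing ingredients is accurate. The paper does not, however, argue by contradiction and rescaling; it derives a closed system of recursive inequalities in the quantities $a_i = \|\vp_\ve/W_i\|_{L^\infty(C_i)}$ and $\Lambda_i = \sum_j|\lambda_{ij}|$ (properties $(H1_\ell)$, $(H2_\ell)$, and the recursion \eqref{estal}) and solves it directly. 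This matters because your contradiction scheme, as sketched, cannot close.

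The genuine gap is in the step where you write $\psi_\ve(x) = \int_M G(x,y)\bigl[f'(\sum\kappa_i W_i)\psi_\ve + (\text{source})\bigr](y)\,dv_g(y)$ with $G$ the Green's function of $\triangle_g+c_nS_g+\ve h$, and claim $\int_M G(x,y)\,W_j^{2^*-2}(y)\,W_\ve(y)\,dv_g(y)\lesssim o(1)\,W_\ve(x)$. This inequality is false: the convolution of the Green's kernel $d_g(x,\cdot)^{2-n}$ against $W_j^{2^*-2}W_\ve$ reproduces $W_\ve$ at the same order, with no small factor, precisely because $\triangle_\xi U_0 = U_0^{2^*-1}$. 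Treating the linearization term $f'(\sum W_i)\psi_\ve$ as a generic source in the Green representation for $\triangle_g+c_nS_g+\ve h$ therefore loses the critical gain, and the resulting integral inequality is not contractive; your argument would produce $\|\psi_\ve/W_\ve\|_\infty\lesssim \|\psi_\ve/W_\ve\|_\infty$ plus lower-order, a tautology. This is exactly the point of the remark surrounding \eqref{interpretation}: the content of the proposition is that $\vp_\ve$ behaves \emph{as if} the term $f'(\sum W_i)\vp_\ve$ were absent, and one cannot prove this by pretending it \emph{is} absent and then re-inserting it as a source.

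The paper's way around this is Lemma \ref{lemmerep}, a representation formula for the \emph{full linearized operator} $\triangle_\xi-(2^*-1)U_0^{2^*-2}$ on $\mathbb{R}^n$, obtained by conformally transporting the problem to the round sphere $\mathbb{S}^n$ where $\triangle_{g_0}-n$ has an explicit Green's function and an explicit $(n+1)$-dimensional kernel. This gives
\[ \bigl| \vp(x) - \Pi(\vp)(x) \bigr| \lesssim \int_{\mathbb{R}^n} |x-y|^{2-n} \bigl|\triangle_\xi\vp - (2^*-1)U_0^{2^*-2}\vp\bigr|(y)\,dy, \]
with two features absent from your scheme and both essential: the dangerous potential $(2^*-1)U_0^{2^*-2}$ appears \emph{inside} the operator whose Green's kernel is used, so that after rescaling the right-hand side of \eqref{eqlin} enters with the correct small scale; and the kernel projection $\Pi(\vp)$ is subtracted on the left, with the orthogonality $\vp_\ve\in K_k^\perp$ then used (Step 3 of the paper's proof) to show $\Pi(\hat\vp_\ve)$ is itself small. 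Your blow-up-to-$\Psi\equiv 0$ step implicitly relies on something equivalent, but without Lemma \ref{lemmerep} (or a substitute for it) you cannot promote local vanishing of the rescaled limit to the quantitative global bound. A secondary point: the paper proves \eqref{vpl} \emph{simultaneously} with the pointwise bound inside the induction (it is property $(H2_\ell)$, proved in Step 5 by pairing \eqref{eqlin} with $Z_{\ell,j}$ using the already-established $(H1_{\ell+1})$), not as a self-contained preliminary lemma as you suggest; the $\Lambda_\ell$ and $a_\ell$ estimates are genuinely coupled and must be closed together.
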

In Proposition \ref{proplin}, $B_\ell$ is as in \eqref{defBi} and $\theta_\ell$ as in \eqref{defti}.

\medskip

Let us  comment on this result. Proposition \ref{proplin} yields a pointwise control on $\vp_{\ve}$ \emph{and} on the $\lambda_{ij}$ assuming only \eqref{hypf}: no previous control on $\vp_{\ve}$ or on the $\lambda_{ij}$ was assumed. The estimates \eqref{estvpi} and \eqref{estvp1} have a deeper meaning. Let indeed $G_{\ve}$ be the Green's function of $\triangle_g + c_n S_g + \ve h$ in $M$. Assuming that $k_\ve$ satisfies \eqref{hypf}, straightforward computations using \eqref{tech1} and \eqref{tech2} give:
\[  \int_M G_{\ve}(x,\cdot) |k_{\ve}| dv_g \lesssim \mu_{\ell-1}^{1 - \frac{n}{2}} \left( \frac{\mu_{\ell}}{\theta_{\ell}(x)} \right)^2  + \frac{\theta_{\ell+1}(x)^2}{\mu_{\ell}^2} W_{\ell+1}(x) \mathds{1}_{\{ \theta_{\ell+1}(x) \le \mu_{\ell}\}}\]
for   $x \in B_{\ell} \backslash B_{\ell+1}$, $2 \le \ell \le k$, and
\[ \bal \int_M G_{\ve}(x,\cdot) |k_{\ve}| dv_g \lesssim  &  \left \{ \bal
&  \frac{\mu_{1}^{\frac{n+2}{2}}}{\theta_1(x)^2} & \textrm{ if } (M,g) \textrm{ is l.c.f. around } \xi_1, \\
& \frac{\mu_1^{\frac{n+2}{2}}}{\theta_1(x)^{n-4}} &  \textrm{ if } (M,g) \textrm{ is not l.c.f.}  \\
\eal \right \}  \\
& + \frac{\theta_{2}(x)^2}{\mu_{1}^2} W_{2}(x) \mathds{1}_{\{ \theta_{2}(x) \le \mu_{1}\}} .
\eal\]
if $x \in B_1 \backslash B_2 = M \backslash B_2$. Compare the last two estimates with \eqref{estvpi} and \eqref{estvp1}: Proposition \ref{proplin} therefore shows that a solution $\vp_{\ve} \in K_k^{\perp}$ of \eqref{eqlin} behaves \emph{as if it satisfied the much simpler equation}
\ben \label{interpretation} \bal
 \triangle_g \vp_{\ve}  + (c_n S_g + \ve h) \vp_{\ve}  = k_{\ve} + \sum \limits_{\underset{j =0 .. n }{i=1..k}}    \lambda_{ij} ( \triangle_g   + c_n S_g + \ve h)Z_{i,j}, 
\eal \een
with the $\lambda_{ij}$ satisfying \eqref{vpl}. Proposition \ref{proplin} thus shows that the term $f'\big(  \sum_{i=1}^k \kappa_i U_i \big) \vp_{\ve}$ has no incidence on the behavior of $\vp_{\ve}$, provided $\vp_{\ve} \in K_k^{\perp}$: it therefore has to be seen as a considerably more involved analogue of Proposition \ref{proplinbase}, one which yields sharp pointwise estimates on $\vp_\ve$. The usefulness of Proposition \ref{proplin} in the proof of Theorems \ref{maintheo} and \ref{theoBN} becomes clear when anticipating on Section \ref{nonlin} below. In Section \ref{nonlin} we will perform a nonlinear fixed-point argument to solve \eqref{eqY} up to kernel elements. Thanks to Proposition \ref{proplin} we will be able to perform the point-fixing argument in suitably weighted spaces for the $C^0$ norm. An accurate choice of the weight will reduce the point-fixing to an application of Proposition \ref{proplin} with an error term given by \eqref{ferr} and by a negligible (and controlled) nonlinear correction.  

\medskip

The proof of Proposition \ref{proplin} goes through an a priori bubble-tree analysis. We iteratively obtain refined pointwise estimates on $|\vp_{\ve}|$ on each annular region $B_i \backslash B_{i+1}$ starting from $B_k$ (which corresponds to the highest bubble). These estimate depend on the pointwise behaviour of $\vp_\ve$ on all of $M$ and on the $\lambda_{ij}$ and are heavily intertwined. We then propagate these estimates until $M \backslash B_1$, we perform a second-order one-bubble analysis on the lowest bubble in $M \backslash B_1$ to prove \eqref{estvp1}, and we propagate it back to the highest bubble to obtain \eqref{estvpi}. The $\lambda_{ij}$ themselves are iteratively estimated in the course of the proof.

\subsection{A representation formula}

We start by proving a short lemma that will be crucial in the proof of Proposition \ref{proplin}. It is a representation formula for the linearised operator at the standard bubble in $\RR^n$:

\begin{lemme} \label{lemmerep}
Let $\vp \in C^{\infty}_c(\RR^n)$ and let, for any $x \in \RR^n$,
\[ \Pi(\vp)(x) = \sum_{j=0}^n \left( \frac{ \int_{\RR^n} \vp V_j U_0^{2^*-2} dy}{\int_{\RR^n} V_j^2 U_0^{2^*-2} dy}\right) V_j(x) \]
be the orthogonal projection of $\vp$ on $\textrm{Span}\{V_j, 0 \le j \le n\}$ for the $D^{1,2}$ scalar product in $\RR^n$. Then, for any $x \in \RR^n$:
\ben \label{formulerep}
 \big| \vp(x) -  \Pi(\vp)(x) \big| \lesssim \int_{\RR^n} |x-y|^{2-n} \left|\triangle_\xi  \vp- (2^*-1) U_0^{2^*-2} \vp \right|(y)dy
 \een
where $V_j$ and  $U_0$ are as in \eqref{defB0} and \eqref{defVi}. 
\end{lemme}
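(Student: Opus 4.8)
The plan is to reduce the statement to a Liouville-type classification of the kernel of the linearized operator $L_0 := \triangle_\xi - (2^*-1) U_0^{2^*-2}$ on $D^{1,2}(\RR^n)$, combined with a Green-function estimate. Write $f := \triangle_\xi \vp - (2^*-1) U_0^{2^*-2}\vp \in C_c^\infty(\RR^n)$, so that $L_0 \vp = f$ in the distributional sense. It is classical (Rey, Bianchi--Egnell) that the bounded solutions of $L_0 u = 0$ in $D^{1,2}(\RR^n)$ are exactly the span of $V_0, \dots, V_n$, i.e. $\ker L_0 = \mathrm{Span}\{V_j : 0 \le j \le n\}$, and that $L_0$ is an isomorphism from $(\ker L_0)^\perp$ onto its image, with the inverse bounded in appropriate weighted norms. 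Set $\psi := \vp - \Pi(\vp)$; then $\psi \in (\ker L_0)^\perp$ for the $D^{1,2}$ scalar product (note $\Pi(\vp) \in \ker L_0$, so $L_0 \Pi(\vp)=0$ and hence $L_0 \psi = f$ as well), and we must bound $|\psi(x)|$ by $\int |x-y|^{2-n}|f|(y)\,dy$.

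The key step is to construct a barrier. First I would build a particular solution $\psi_0(x) := \int_{\RR^n} |x-y|^{2-n} |f|(y)\,dy$ (up to the constant $(n-2)\omega_{n-1}$), which satisfies $\triangle_\xi \psi_0 = (n-2)\omega_{n-1}|f| \ge 0$ and decays like $|x|^{2-n}$ at infinity. The function $w := \psi_0 - \psi$ then satisfies $\triangle_\xi w = (n-2)\omega_{n-1}|f| - \triangle_\xi \psi = (n-2)\omega_{n-1}|f| - f + (2^*-1)U_0^{2^*-2}\psi$; rearranging, $L_0 w = (n-2)\omega_{n-1}|f| - f - (2^*-1)U_0^{2^*-2}\psi_0 \ge -C U_0^{2^*-2}\psi_0$, and $L_0(-w) \le C U_0^{2^*-2}\psi_0$. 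The right-hand side is integrable and decays fast, so one can hope to dominate $w$ by a small multiple of $\psi_0$ itself plus kernel elements. Concretely, I would argue: $\psi$ is bounded (by elliptic regularity, since $f$ is compactly supported and $U_0^{2^*-2}$ is bounded) and decays — one shows $|\psi(x)| = o(|x|^{2-n})\cdot(\text{growth})$, but more carefully $\psi \in D^{1,2}$ with $\psi \perp \ker L_0$ forces the decay $|\psi(x)| \lesssim |x|^{2-n}$ away from the support of $f$ after subtracting the projection. Then apply the maximum principle / comparison argument to $w$ on $\RR^n$: since $\psi_0 > 0$ is a supersolution of $L_0$ in the region where $U_0^{2^*-2}$ is small (i.e. $L_0 \psi_0 = (n-2)\omega_{n-1}|f| - (2^*-1)U_0^{2^*-2}\psi_0$ which for large $|x|$ has the right sign), and $\psi_0$ controls $\psi$ near the origin by the explicit bound $|\psi(x)| \lesssim \|f\|_{L^\infty, \mathrm{cpt}}$, one concludes $|w| \lesssim \psi_0 + (\text{something in }\ker L_0)$.

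The subtle point — and the main obstacle — is handling the kernel: the comparison argument naturally only controls $\psi$ modulo $\ker L_0$, but $\psi$ is already orthogonal to $\ker L_0$, so the "leftover" kernel component must vanish. I would pin this down by testing the equation $L_0\psi = f$ against $V_j$: by self-adjointness $\int f V_j = \int (L_0 \psi) V_j = \int \psi (L_0 V_j) = 0$ automatically, which is consistent but does not immediately give the pointwise bound; instead one uses that any solution of $L_0 \psi = f$ in $D^{1,2}$ orthogonal to the kernel is unique and given by the bounded inverse, whose integral kernel is pointwise dominated by $C|x-y|^{2-n}$ — this last fact is the quantitative input and follows from the Green-function-type representation for $L_0$ restricted to $(\ker L_0)^\perp$. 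Alternatively, and perhaps more cleanly, I would invoke directly that the resolvent $(L_0)^{-1} : (\ker L_0)^\perp \to (\ker L_0)^\perp$ has a kernel $\mathcal{G}_0(x,y)$ satisfying $0 \le \mathcal{G}_0(x,y) \lesssim |x-y|^{2-n}$ uniformly (a statement proved e.g. in the references already cited for the $H^1$ theory, or by a direct separation-of-variables / spherical harmonics analysis of $L_0$, exploiting that the only non-positive-semidefinite mode lies in $\ker L_0$ itself after projection), from which \eqref{formulerep} follows by writing $\psi(x) = \int \mathcal{G}_0(x,y) f(y)\,dy$ and estimating. The bulk of the work is establishing that $\mathcal{G}_0$ is genuinely bounded by the Newtonian kernel despite the negative potential $-(2^*-1)U_0^{2^*-2}$; the point is that after removing $\ker L_0$ the operator is positive, and the potential is a short-range (decaying like $|x|^{-4}$) perturbation of $\triangle_\xi$, so standard perturbative Green-function comparison applies. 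Finally, the $D^{1,2}$-orthogonal projection $\Pi$ in the statement uses the weight $U_0^{2^*-2}$ because $\langle V_i, V_j\rangle_{D^{1,2}} = (2^*-1)\int V_i V_j U_0^{2^*-2} \cdot (\text{const})$, which is exactly the formula written, so no reconciliation is needed there.
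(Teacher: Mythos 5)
There is a genuine gap. Your plan is structurally right (decompose off the kernel of $L_0 = \triangle_\xi - (2^*-1)U_0^{2^*-2}$ and try to exhibit a Green-function representation for the orthogonal complement), but the one quantitative input you need — that the projected inverse of $L_0$ on $(\ker L_0)^\perp$ has an integral kernel dominated by $|x-y|^{2-n}$ — is precisely the content of the lemma, and you never actually establish it. The barrier/comparison argument you sketch does not close: $\psi_0(x)=\int|x-y|^{2-n}|f|\,dy$ is a supersolution of $\triangle_\xi$, not of $L_0$, and since $L_0 = \triangle_\xi - (2^*-1)U_0^{2^*-2}$ has a negative zeroth-order coefficient and a nontrivial kernel, the maximum principle is not available. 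You acknowledge the obstruction, but the proposed fallback (``invoke directly that the resolvent has a kernel $\mathcal{G}_0$ with $0\le\mathcal{G}_0(x,y)\lesssim|x-y|^{2-n}$, proved e.g. in the references cited for the $H^1$ theory'') is circular in practice: the cited $H^1$ references (Robert--V\'etois, Esposito--Pistoia--V\'etois, Musso--Pistoia) give energy estimates, not a pointwise Newtonian bound on the projected Green kernel, and the latter is exactly what must be shown here. Your additional claim that $\mathcal{G}_0 \ge 0$ is also unjustified — after removing the kernel the Green function need not have a sign, and indeed the paper only uses $|G|\lesssim d_{g_0}(\cdot,\cdot)^{2-n}$.

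The paper's proof sidesteps all of this by conformal transplantation to the round sphere. It sends $\vp$ to $\tilde{\vp}=\vp\circ\pi_N/U$ on $\mathbb{S}^n$ and uses the conformal covariance of the conformal Laplacian to turn $\triangle_\xi\vp-(2^*-1)U_0^{2^*-2}\vp$ into $(\triangle_{g_0}-n)\tilde\vp$ (up to a scaling change of variables). On the compact manifold $\mathbb{S}^n$ the operator $\triangle_{g_0}-n$ has known finite-dimensional kernel (the restrictions of the ambient coordinate functions), and the Green function estimate $|G(x,y)|\le C\,d_{g_0}(x,y)^{2-n}$ for its kernel-projected inverse is a standard fact (Robert, \emph{op.~cit.}). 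Transporting that representation back to $\RR^n$ via the stereographic projection and checking that the transported kernel functions are exactly the $V_j$ then yields \eqref{formulerep}. That conformal step is the idea missing from your proposal, and it is what replaces the unestablished Green-function bound.
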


\begin{proof}
Let $g_0$ be the round metric on the sphere. Let  $N \in \mathbb{S}^n$ and let $\pi_N: \mathbb{S}^n \backslash \{-N\} \to \RR^n$ the stereographic projection sending $N$ to $0$. There holds
\[ (\pi_N^{-1})^* g_0 = U^{\frac{4}{n-2}} \xi, \]
where $\xi$ is the Euclidean metric and $U(x)^{\frac{4}{n-2}}= \frac{4}{(1+|x|^2)^2}$. As a consequence, $dv_{g_0} = U^{\frac{2n}{n-2}} dx$. For $\vp \in C^{\infty}_c(\RR^n)$ and $x \in \mathbb{S}^n$ we let:
\[ \tilde \vp(x) = \frac{\vp( \pi_N(x))}{U(\pi_N(x))}. \]
This defines a smooth function $\tilde{\vp}$ on $\mathbb{S}^n$ with compact support in $\mathbb{S}^n \backslash \{-N\}$, and there holds for any $\vp, \psi \in C^{\infty}_c(\RR^n)$,
\ben \label{L2Sn}
\int_{\mathbb{S}^n}\tilde{ \vp} \tilde{ \psi} dv_{g_0} = \int_{\RR^n}\vp \psi U^{2^*-2} dx . 
\een
By the conformal invariance of the conformal laplacian we have furthermore, for $x \in \mathbb{S}^{n}$:
\[ \Big( \triangle_{g_0} + \frac{n(n-2)}{4}\Big) \tilde{\vp} (x) = U(\pi_N(x))^{1-2^*} \triangle_{\xi} \vp (\pi(x)),  \]
so that for any $\vp, \psi \in C^{\infty}_c(\RR^n)$;
\[ \int_{\mathbb{S}^n} \Big( \langle \nabla \tilde{\vp}, \nabla \tilde{\psi} \rangle  + \frac{n(n-2)}{4} \tilde{\vp} \tilde{\psi}\Big) dv_{g_0} = \int_{\mathbb{R}^n} \langle \nabla \vp, \nabla \psi \rangle dx \]
and
\ben \label{confsphere}
 \big( \triangle_{g_0} -n \big) \tilde{\vp} (x) = U(\pi_N(x))^{1- 2^*} \Big( \triangle_{\xi} \vp (\pi_N(x)) - \frac{n(n+2)}{(1+|x|^2)^2}\vp (\pi_N(x)) \Big) 
 \een
for any $x \in \mathbb{S}^n$. Let 
\[ K = \big \{ f \in H^1(\mathbb{S}^n) \textrm{ such that } \triangle_{g_0} f - n f = 0\big \}. \] 
An orthonormal basis of $K$, for the $L^2$ scalar product \eqref{L2Sn} on $(\mathbb{S}^n, g_0)$, is given by the family $(K_0, \dots, K_{n})$ where 
\[K_i = c_i^{-1} x_i, \quad c_i^2 = \int_{\mathbb{S}^n} x_i^2 dv_{g_0}, \quad 0 \le i \le n, \]
and where the $x_i$ are the coordinate functions restricted to $\mathbb{S}^n$. Let $G(x,y)$ denote the Green's function (see e.g. \cite{RobDirichlet}) for the operator $ \triangle_{g_0} -n$ on $\mathbb{S}^n$. Standard properties of the Green's function ensure that 
\[ |G(x,y)| \le C d_{g_0}(x,y)^{2-n} \textrm{ for } x\neq y \]
and that, for any smooth function $ \psi$ in $\mathbb{S}^n$ and any $z \in \mathbb{S}^n$,
\[ \psi(z) - \Pi_{K}(\psi)(z) = \int_{\mathbb{S}^n} G(z,y) \big( \triangle_{g_0} \psi(y) - n \psi(y) \big) dv_{g_0}(y),\]
where $\Pi_K(\psi) = \sum_{j=0}^n( \int_{\mathbb{S}^n} \psi K_j dv_{g_0}) K_j$ is the $L^2$ projection of $\psi$ over $K$. Let now $x\in \RR^n$, and apply the latter to $\psi = \tilde{\vp}$ and $z = \pi_N^{-1}(x)$. Letting $\tilde{K}_i(x) = U(x) K_i(\pi_N^{-1}(x))$ we thus get, by \eqref{L2Sn} and \eqref{confsphere}:
\ben \label{reppartielle}
 \vp(x) -  \Pi(\vp)(x) = \int_{\RR^n}  \tilde{G}(x,y) \Big(  \triangle_{\xi} \vp (y) - \frac{n(n+2)}{(1+|y|^2)^2}\vp (y) \Big) dy,    
 \een
where we have let
\[ \bal
  \Pi(\vp)(x) & =  \sum_{j=0}^n \Big( \int_{\RR^n} \vp \tilde{K}_j U^{2^*-2} dx \Big) \tilde{K}_j(x)
  \eal \]
  and, for any $x\neq y$ in $\RR^n$,
  \[  \tilde{G}(x,y) = U(x) G\big(\pi_N^{-1}(x), \pi_N^{-1}(y) \big) U(y) . \]
  Standard properties of the stereographic projection yield
  \[ d_{g_0}\big( \pi_N^{-1}(x),\pi_N^{-1}(y) \big) \gtrsim \frac{|x-y|}{\sqrt{(1+|x|^2)(1+|y|^2)}} \]
  so that 
  \[ |  \tilde{G}(x,y) | \lesssim |x-y|^{2-n} \quad \textrm{ for any } x \neq y \textrm{ in } \RR^n.\]
Finally, if $\vp \in C^{\infty}_c(\RR^n)$, letting $ z = \frac{x}{\sqrt{n(n-2)}}$ and  $\hat{\vp}(x) = \vp (z)$ gives
\[ \triangle_{\xi} \hat{\vp}(x) - (2^*-1) U_0^{2^*-2}(x) \hat{\vp}(x) = \frac{1}{n(n-2)} \Big( \triangle_{\xi} \vp(z) -  \frac{n(n+2)}{(1+|z|^2)^2}\vp (z) \Big). \]
It remains to apply \eqref{reppartielle} at $\frac{x}{\sqrt{n(n-2)}}$ and to perform the scaling $y \mapsto \frac{y}{\sqrt{n(n-2)}}$  in the integral. The proof is concluded by remarking that in the chart $\pi_N$ the coordinate functions $(x_0, \dots, x_n)$ on $\mathbb{S}^n$ write respectively as 
\[ \frac{1-|x|^2}{1+|x|^2}, \frac{2x_1}{1+|x|^2}, \dots, \frac{2x_n}{1+|x|^2},  \]
which gives in turn
  \[ \tilde{K}_0\Big( \frac{x}{\sqrt{n(n-2)}}\Big)  = c_0^{-1}2^{\frac{n-2}{2}} V_0(x)   \textrm{ and } \tilde{K}_i \Big( \frac{x}{\sqrt{n(n-2)}}\Big)= c_i^{-1}\frac{2^{\frac{n}{2}}}{n(n-2)} V_i(x)  \]
  for all  $1 \le i \le n$, where the $V_i$ are given by \eqref{defVi}.
\end{proof}

\subsection{Proof of Proposition \ref{proplin}, Part 1: a recursive estimate}

Let $k \ge 1$ be an integer, $\{\kappa_i\}_{1 \le i \le k} \in \{\pm1\}^k$ and $\ve_0 >0$ be as in Proposition \ref{proplinbase}. For any $\ve \in (0, \ve_0)$ let $(t_1, \xi_1, (t_{i,\ve}, z_{i,\ve})_{2 \le i \le k}) \in S_k$, $h \in C^0(M)$, $k_{\ve} \in C^0(M)$ satisfiying \eqref{hypf}, and let $\vp_{\ve} \in K_k^{\perp}$ be the unique solution of \eqref{eqlin}, where the $\lambda_{ij}$ are uniquely defined by \eqref{eqlinbase}. By \eqref{estH1lin} and \eqref{hypf} we easily get 
$$ \Vert \vp_{\ve} \Vert_{H^1} = o(1) $$
as $\ve \to 0$, and hence integrating \eqref{eqlin} against $Z_{i,j}$ for all $1 \le i \le k$ and $0 \le j \le n$ yields
\ben \label{estlambdaij}
|\lambda_{ij}| = o(1) \quad \textrm{ for } 1 \le i \le k,  \quad 0 \le j \le n.
\een
For any $2 \le i \le k$, we define
\ben \label{defCi} 
C_i = B_{g_{\xi_i}}(\xi_i, 4 \sqrt{\mu_i\mu_{i-1}}) \backslash B_{g_{\xi_{i+1}}}(\xi_{i+1}, \frac14 \sqrt{\mu_{i+1}\mu_{i}}),
\een and
\ben \label{defC1}
C_1 = B_{g_{\xi_1}}(\xi_1, 2 r_0) \backslash B_{g_{\xi_{2}}}(\xi_{2}, \frac14 \sqrt{\mu_{2}\mu_{1}}),
\een
where $r_0$ is as in \eqref{defr0}. We also define
\ben \label{defai}
\bal
 a_i & = a_{i,\ve} := \left \Vert \frac{\phi_{\ve}}{W_i} \right \Vert_{L^{\infty}(C_i)} \textrm{ for } 2 \le i \le k, \\
  a_1 & = a_{1,\ve} := \left \Vert \frac{\phi_{\ve}}{W_1} \right \Vert_{L^{\infty}(M\backslash B_{g_{\xi_{2}}}(\xi_{2}, \frac14 \sqrt{\mu_{2}\mu_{1}}))} ,
\eal
\een
and, for a given $1 \le i \le k$,
\ben \label{Lambdai}
\Lambda_i : = \sum_{j=0..n} |\lambda_{ij}|.
\een
By definition of $a_i$, $\vp_{\ve}$ satisfies $|\vp_{\ve} |\le a_i W_i$ in $C_i$. The proof of  Proposition \ref{proplin} consists in an explicit estimation of the $a_i$.

\medskip

For a fixed $1 \le  \ell \le k$, we will say that the property $(H_{\ell})$ is satisfied if the following two properties $(H1_{\ell})$ and $(H2_{\ell})$ are both satisfied:

\medskip

\textbf{Property $(H1_{\ell})$:}   for any $ p \in \{\ell, \dots,  k\} $ and for any $ x \in C_p$,
\ben \label{H1} \tag{$H1_{\ell}$}
\bal
 |\vp_{\ve}(x)| & \lesssim \ve \mu_{p}^{3 - \frac{n}{2}} \sum_{j\ge p} a_j  +   \sum_{j\le p-1} \mu_j^{1 - \frac{n}{2}}\Lambda_j +  \mu_{p-1}^{1 - \frac{n}{2}} \left( \frac{\mu_{p}}{\theta_{p}(x)} \right)^2 \\
 &+ \sum_{j\ge p+1}\mu_{j-1}^2 a_j W_j (x) + \ve \big(1 + \sum_{j\le p} a_j \big) \theta_p^2(x) W_p(x)  \\
 & +   (1+a_p)\frac{\theta_{p+1}(x)^2}{\mu_{p}^2} W_{p+1}(x) \mathds{1}_{\{ \theta_{p+1}(x) \le \mu_{p}\}} \\
  \eal
\een 

\textbf{Property $(H2_{\ell})$:} for any $ p \in \{\ell, \dots,  k\} $,
\ben \label{H2} \tag{$H2_{\ell}$}
 \Lambda_p \lesssim  \ve \mu_p^2 \left( 1 + \sum_{i=1}^k a_i \right) . 
 \een

A word on the notations is in order here. In \eqref{H1} and \eqref{H2} we take the following conventions: $W_{k+1} = 0$ and  $\mu_0 = \frac{1}{\mu_1}$. In the rest of the proof of Proposition \ref{proplin} both the expressions $\ve \mu_\ell^2$ and $\left( \frac{\mu_\ell}{\mu_{\ell-1}} \right)^{\pui}$ shall appear throughout the computations. When $\ell \ge2$, of course, they are of the same order in $\ve$ by \eqref{growthmuell}. They might differ when $\ell=1$, however, according to whether $(M,g)$ is locally conformally flat around $\xi_1$ or not. It will be understood, by the choice of our convention $\mu_0 = \frac{1}{\mu_1}$, that  the term $\left( \frac{\mu_\ell}{\mu_{\ell-1}} \right)^{\pui}$ becomes $\mu_1^{n-2}$ when $\ell=1$.

\medskip

The core of the proof of Proposition \ref{proplin} consists in proving recursively that  $(H_{\ell})$ is true for any $1 \le \ell \le k$. 

\begin{lemme} \label{lemmeinduction}
For any $1 \le \ell \le k$, $(H_{\ell})$ is true. 
\end{lemme}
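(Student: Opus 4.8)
The plan is to argue by descending induction on $\ell$, starting from $\ell = k$ (the highest bubble) and going down to $\ell = 1$. The base case $\ell = k$ and the inductive step share the same structure, so I would set up a single argument that, assuming $(H_{\ell+1})$ (with the convention that this is vacuous when $\ell = k$), establishes $(H_\ell)$; the only new information to produce at step $\ell$ is the pointwise bound on $C_\ell$ and the bound on $\Lambda_\ell$. The engine of the whole argument is the representation formula of Lemma \ref{lemmerep}, applied after rescaling around the $\ell$-th bubble.

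\textbf{Key steps.} First I would introduce the rescaled function $\tilde\vp_\ell(y) = \mu_\ell^{\frac{n-2}{2}} \vp_\ve\big(\exp_{\xi_\ell}^{g_{\xi_\ell}}(\mu_\ell y)\big)$, which satisfies, on a large Euclidean ball, an equation of the form $\triangle_\xi \tilde\vp_\ell - (2^*-1)U_0^{2^*-2}\tilde\vp_\ell = \tilde k_\ell + (\text{error from metric, from }h, \text{ and from the other bubbles }W_j, j\neq\ell)$. The error terms are controlled using \eqref{lapbulle}, \eqref{rapportsbulles}, the hypothesis \eqref{hypf} on $k_\ve$, the definitions \eqref{defai} of the $a_i$ (which give $|\vp_\ve| \le a_j W_j$ on $C_j$), and — for the regions interior to $B_{\ell+1}$ — the inductive hypothesis $(H_{\ell+1})$. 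Next I would multiply the rescaled equation against a smooth cutoff $\eta$ supported where $\pi_N$-type charts are valid, so that Lemma \ref{lemmerep} applies to $\eta\tilde\vp_\ell$; the commutator terms $[\triangle_\xi,\eta]\tilde\vp_\ell$ are supported away from the concentration point and are absorbed into the ``other bubbles'' errors and into the $a_{\ell-1}$-weighted terms. Integrating the Newtonian kernel $|x-y|^{2-n}$ against each piece of the right-hand side — using the convolution estimates quoted as \eqref{tech1}, \eqref{tech2} in the text — produces exactly the various terms appearing in $(H1_\ell)$: the $\ve\mu_p^{3-n/2}$ term from the $h$-contribution, the $\mu_j^{1-n/2}\Lambda_j$ terms from the kernel elements $Z_{i,j}$ entering \eqref{eqlin}, the $\mu_{p-1}^{1-n/2}(\mu_p/\theta_p)^2$ term from the self-interaction error in \eqref{lapbulle}, and the $W_{p+1}$ and $\ve\theta_p^2 W_p$ terms from the neighbouring-bubble interactions. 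The projection term $\Pi(\tilde\vp_\ell)$ must be shown negligible: this uses that $\vp_\ve \in K_k^\perp$, i.e. $\langle \vp_\ve, Z_{\ell,j}\rangle = 0$, together with \eqref{psZi}–\eqref{psZik} to convert near-orthogonality in $H^1(M)$ into near-orthogonality against the $V_j$ in $D^{1,2}(\RR^n)$, so that the coefficients of $\Pi(\tilde\vp_\ell)$ are $O(\mu_\ell^2 \cdot \text{sup of }|\vp_\ve|)$ plus contributions from the other bubbles. Finally, $(H2_\ell)$ is obtained by testing the full equation \eqref{eqlin} against each $Z_{\ell,j}$: the left-hand side contributes, via \eqref{lapZ0}–\eqref{lapZi}, a main term $\|\nabla V_j\|_2^2 \lambda_{\ell j} + o(\lambda_{\ell j})$, the cross terms with $Z_{i,j'}$, $i\neq\ell$, are controlled by \eqref{psZik} and the already-known $\Lambda_i$, and the right-hand side is estimated by pairing $k_\ve$ (via \eqref{hypf}) and $\vp_\ve$ (via the $a_i$ and the just-proved $(H1_\ell)$) against $Z_{\ell,j}$, which yields the bound $\ve\mu_\ell^2(1+\sum a_i)$.

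\textbf{Main obstacle.} The hard part will be the bookkeeping of the interaction terms near the ``transition'' annuli $C_\ell \cap C_{\ell\pm1}$, i.e. at scales $\sqrt{\mu_\ell\mu_{\ell-1}}$ and $\sqrt{\mu_{\ell+1}\mu_\ell}$, where one must glue the estimate coming from the rescaling around $W_\ell$ with the one coming from $W_{\ell-1}$ (through $a_{\ell-1}$) or from $W_{\ell+1}$ (through the inductive hypothesis). Ensuring that these overlaps are consistent — that the $a_j$-weighted and $\Lambda_j$-weighted terms propagate with the exact powers of $\mu$ written in $(H1_\ell)$ and do not accumulate spurious constants or logarithms — is where the analysis is delicate, and it is the reason the statement $(H1_\ell)$ is phrased as a sum over all scales rather than a clean single-bubble bound. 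A secondary difficulty, minor by comparison, is keeping track of the two possible values of $\left(\mu_1/\mu_0\right)^{\frac{n-2}{2}}$ in the locally conformally flat versus non-l.c.f. cases, handled by the stated convention $\mu_0 = 1/\mu_1$.
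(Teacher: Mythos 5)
Your plan matches the paper's proof essentially step by step: descending induction from $\ell=k$, rescaling around $\xi_\ell$ with a cutoff, the conformal representation formula from Lemma \ref{lemmerep}, convolution estimates for each source term, near-vanishing of the projection $\Pi(\hat\vp_\ve)$ from $\vp_\ve\in K_k^\perp$ and $\hat Z_{\ell,j}=V_j$, and finally $(H2_\ell)$ by pairing \eqref{eqlin} against $Z_{\ell,j}$. Two technical points you leave implicit but which the paper has to handle explicitly: the commutator terms $[\triangle_\xi,\eta]\hat\vp_\ve$ and, in the non-l.c.f. case, the metric errors $\mu_\ell^2|x|^2|\nabla^2\hat\vp_\ve|$ involve \emph{derivatives} of $\vp_\ve$ at scale $\sqrt{\mu_\ell\mu_{\ell\pm1}}$, and the paper controls these with a separate local elliptic-regularity sub-claim (Claim \ref{claimder2}); and in the non-l.c.f. case the first pass produces a term $\mu_\ell^2 a_\ell(1+|x|)^{4-n}$ that is slightly too weak, forcing a bootstrap (Step 4) in which the projection bound and the derivative estimate are re-inserted to upgrade it to $\mu_\ell^4 a_\ell(1+|x|)^{6-n}$ before $(H1_\ell)$ can be closed. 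Neither affects the soundness of your outline, which is the paper's route.
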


\begin{proof}
We prove Lemma \ref{lemmeinduction} by induction. Since the proof of the initialisation and the induction part are very close we prove them together. Let $1\le \ell \le k$ be an integer. \textbf{If $\ell = k$, we do not assume anything. If $ \ell \le k-1$, we assume that $(H_{\ell+1})$ holds true.} The proof of $(H_{\ell})$ goes through a series of steps. 

\medskip

\textbf{Step $1$: a local rescaling.} We let $\eta: \RR_+ \to \RR_+$ be a smooth nonnegative function with $\eta \equiv 1$ in $[0,\frac32]$ and $\eta \equiv 0$ in $\RR \backslash [0,2]$. We let in the following:
\[ \hat{B}_{\ell} = B_0 \Big( 2 \sqrt{\frac{\mu_{\ell-1}}{{\mu_\ell}}} \Big) \textrm{ if } \ell \ge 2, \quad \hat{B}_1 = B_0 \Big(\frac{2r_0}{\mu_1} \Big) \]
where $r_0$ is as in \eqref{defr0}. We also let 
\[ \hat{B}_{\ell+1} = B\Big(z_{\ell+1}, \sqrt{\frac{\mu_{\ell+1}}{\mu_\ell}} \Big). \]
For any $x \in \hat{B}_{\ell}$, we let 
\ben \label{defhpv}
 \hat{\vp}_{\ve}(x)  = \mu_{\ell}^\pui \big( \Lambda_{\xi_\ell}^{-1} \vp_{\ve} \big) \big( \exp^{g_{\xi_{\ell}}}_{\xi_{\ell}}(\mu_\ell x) \big)\cdot \eta \Big(   \sqrt{\frac{\mu_{\ell}}{{\mu_{\ell-1}}}} x\Big). 
 \een
In the following we will let $x_{\ell} =  \exp^{g_{\xi_{\ell}}}_{\xi_{\ell}}(\mu_\ell x)$ if $x \in \hat{B}_{\ell}$ and we will define
\[ \hat{g}_{\ell}(x) = (\exp^{g_{\xi_{\ell}}}_{\xi_{\ell}})^*g (\mu_\ell x). \]
Let also, for $x \in \hat{B}_{\ell}$ and $1 \le i \le k $, $0 \le j \le n$:
\[ \hat{W}_i(x) =  \mu_{\ell}^\pui \big( \Lambda_{\xi_\ell}^{-1}W_{i} \big)(x_{\ell}) \quad \textrm{ and } \quad   \hat{Z}_{i,j}(x)  =  \mu_{\ell}^\pui \big( \Lambda_{\xi_\ell}^{-1}Z_{i,j} \big)(x_{\ell}). \]  By the conformal invariance property of the conformal laplacian and \eqref{eqlin}, the function $\hpv$ satisfies the following equation: if $x \in \hat{B}_\ell$,
\ben \label{eqhpv}
\bal
\Big(  \triangle_{\hat{g}_{\ell}} & + \mu_{\ell}^2 c_n S_{g_{\xi_\ell}}(x_\ell) + \mu_{\ell}^2 \ve h(x_{\ell})  \Lambda_{\xi_{\ell}}(x_{\ell})^{2 - 2^*}\Big) \Big(  \hpv -   \sum \limits_{\underset{j =0 .. n }{i=1..k}}    \lambda_{ij} \hat{Z}_{i,j} \Big) \\
& = \mu_{\ell}^{\frac{n+2}{2}} \Lambda_{\xi_{\ell}}(x_{\ell})^{1 - 2^*} k_{\ve}(x_\ell) + \mu_\ell^2  \Lambda_{\xi_{\ell}}(x_{\ell})^{2 - 2^*}f'\big(  \sum_{i=1}^k \kappa_i W_i \big)(x_{\ell}) \hpv  \\
& + O\Big(\frac{\mu_{\ell}}{{\mu_{\ell-1}}} |\hpv|  + \sqrt{\frac{\mu_{\ell}}{{\mu_{\ell-1}}}} |\nabla \hpv| \Big)\cdot \mathds{1}_{\frac32 \sqrt{\frac{\mu_{\ell-1}}{{\mu_\ell}}} \le |x| \le 2  \sqrt{\frac{\mu_{\ell-1}}{{\mu_\ell}}}}
\eal
\een
If $(M,g)$ is locally conformally flat around $\xi_1$ the metric $\Lambda_{\xi_{\ell}}^{\frac{4}{n-2}} g$ is flat, and then 
\[ \big( \triangle_{\hat{g}_{\ell}}  + \mu_{\ell}^2 c_n S_{g_{\xi_\ell}}(x_\ell)\big) \hpv(x)  = \triangle_{\xi} \hpv(x) \]
in $\tilde{B}_{\ell}$, where $\xi$ is the Euclidean metric. Otherwise, fine properties of the conformal normal coordinates (see e.g. \cite{EspositoPistoiaVetois}, Lemma $6.3$) give
\[ \bal
  \big( \triangle_{\hat{g}_{\ell}}  + \mu_{\ell}^2 c_n S_{g_{\xi_\ell}}(x_\ell)\big) \hpv(x)  & = \triangle_{\xi} \hpv(x)  + O \Big( \mu_\ell^2 |x|^2 |\nabla^2 \hpv(x)| \Big) + O \Big( \mu_\ell^3 |x|^2 |\nabla \hpv(x)| \Big) \\
  & + O\Big(\mu_\ell^4 |x|^2 |\hpv|\Big). 
  \eal \] 
By \eqref{rapportsbulles} we have 
\ben \label{diffbulle}
 \Big|  f'\big(  \sum_{i=1}^k \kappa_i W_i \big)(x_{\ell}) - (2^*-1)W_\ell(x_{\ell})^{2^*-2}\Big| \lesssim W_\ell^{2^*-3} (W_{\ell+1} + W_{\ell-1}) 
 \een
 in $B_{\ell} \backslash B_{\ell+1}$, where the term $W_{\ell-1}$ is assumed to vanish when $\ell=1$, which yields after scaling and by \eqref{growthmuell}:
\[  \bal
\mu_\ell^2&   \Lambda_{\xi_{\ell}}(x_{\ell})^{2 - 2^*} \left| f'\big(  \sum_{i=1}^k \kappa_i W_i \big)(x_{\ell}) - (2^*-1)W_\ell(x_{\ell})^{2^*-2}\right|  \\
& \lesssim 
\left \{
\bal
& \ve \mu_\ell^2  (1 + |x|)^{n-6} +  \left( \frac{\mu_{\ell+1}}{\mu_{\ell}} \right)^\pui \frac{(1+|x|)^{n-6}}{\big( \frac{\mu_{\ell+1}}{\mu_\ell} + |x - z_{\ell+1}|\big)^{n-2}}
& \textrm{ in } \hat{B}_{\ell} \backslash \hat{B}_{\ell+1} \\
& \sum_{i=\ell+1}^k \hat{W}_i^{2^*-2}(x) 
& \textrm{ in } \hat{B}_{\ell+1} \eal .
\right.  
\eal \]
By \eqref{defW} and the choice of the conformal correction $\Phi$ we also have, with \eqref{defmass} and \eqref{nonlcf}:
\[\bal
 \left| \mu_\ell^2    \Lambda_{\xi_{\ell}}(x_{\ell})^{2 - 2^*} W_{\ell}^{2^*-2} (x_\ell) - U_0^{2^*-2}(x)\right| \lesssim 
 \left \{
 \bal
 & \mu_\ell^{n-2} (1+|x|)^{n-6} & \textrm{ if } (M,g) \textrm{ is l.c.f.} \\
 & \mu_{\ell}^4 & \textrm{ otherwise. } \\
 \eal 
 \right.
 \eal \]
Finally, the definition of the functions $Z_{i,j}$ in \eqref{defZi} and straightforward computations using \eqref{lapZ0} and \eqref{lapZi} yield that, for any $1 \le i \le k$,
\[ \bal 
\left| \Big(  \triangle_{\hat{g}_{\ell}}  + \mu_{\ell}^2 c_n S_{g_{\xi_\ell}}(x_\ell) + \mu_{\ell}^2 \ve h(x_{\ell})  \Lambda_{\xi_{\ell}}(x_{\ell})^{2 - 2^*}\Big)  \sum_{j=0..n}  \lambda_{ij} \hat{Z}_{i,j} \Big)\right| \\
\lesssim \left \{ 
\bal
& \left( \frac{\mu_\ell}{\mu_{i}} \right)^{\frac{n+2}{2}} \Lambda_i & \textrm{ if } i \le \ell-1 \\
& \Lambda_{\ell} (1+|x|)^{-n-2} &  \textrm{ if } i = \ell \\
&  \Lambda_{i} \hat{W}_i(x)^{2^*-1}  &  \textrm{ if } i \ge \ell + 1 \\ 
\eal 
\right. 
\eal \]
Note also that we have $\hat{Z}_{\ell,j}(x) = V_j(x)$ for any $x \in \hat{B}_\ell$, where $V_j$ is as in \eqref{defVi}. Gathering the previous estimates in \eqref{eqhpv} shows that $\hpv$ satisfies:
\ben \label{eqhpv2}
\bal
&\Big| \Big(\triangle_{\xi} -  (2^*-1) U_0^{2^*-2} \Big)   \hpv  \Big| \lesssim  \ve \mu_\ell^2 | \hpv|  + \mu_{\ell}^{\frac{n+2}{2}}  |k_{\ve}(x_\ell) |   \\
& + \frac{\Lambda_{\ell}}{(1+|x|)^{n+2}} + \sum_{j \le \ell-1}  \left( \frac{\mu_\ell}{\mu_{j}} \right)^{\frac{n+2}{2}}\Lambda_j  + \sum_{i\ge \ell+1}  \Lambda_{i} \hat{W}_i(x)^{2^*-1} \\
& + \left \{
\bal
&\ve \mu_\ell^2  (1 + |x|)^{n-6} & \\
& +  \left( \frac{\mu_{\ell+1}}{\mu_{\ell}} \right)^\pui \frac{(1+|x|)^{n-6}}{\big( \frac{\mu_{\ell+1}}{\mu_\ell} + |x - z_{\ell+1}|\big)^{n-2}}
& \textrm{ in } \hat{B}_{\ell} \backslash \hat{B}_{\ell+1} \\
& \sum_{i=\ell+1}^k \hat{W}_i(x)^{2^*-2} 
& \textrm{ in } \hat{B}_{\ell+1} \eal 
\right \} \cdot |\hpv|   \\
 & + 
 \left \{ 
 \bal
 & \mu_\ell^{n-2}(1+|x|)^{n-6} |\hpv|  & (M,g) \textrm{ l.c.f.} \\
 &  \mu_{\ell}^4 (1+ |x|^2) |\hpv| +  \mu_\ell^3 |x|^2 |\nabla \hpv(x)| + \mu_\ell^2 |x|^2 |\nabla^2 \hpv(x)| & \textrm{ otherwise } \\
 \eal 
 \right. \\
& +  \Big(\frac{\mu_{\ell}}{{\mu_{\ell-1}}} |\hpv|  + \sqrt{\frac{\mu_{\ell}}{{\mu_{\ell-1}}}} |\nabla \hpv| \Big)\cdot \mathds{1}_{\frac32 \sqrt{\frac{\mu_{\ell-1}}{{\mu_\ell}}} \le |x| \le 2  \sqrt{\frac{\mu_{\ell-1}}{{\mu_\ell}}}}.  \\
\eal
\een
When $\ell \ge 2$ or when $\ell=1$ and  $(M,g)$ is locally conformally flat the term $ \mu_\ell^{n-2} (1+|x|)^{n-6} |\hpv|$  is absorbed into the term $ \ve \mu_\ell^2 (1 + |x|)^{n-6} |\hpv|$. Recall also that when $\ell=1$ the term $\ve \mu_\ell^2 (1 + |x|)^{n-6} |\hpv|$, which is due to the contribution of $W_\ell^{2^*-3} W_{\ell-1}$, vanishes.

\medskip

\textbf{Step $2$: a first pointwise estimate.} In this step we prove the following rough estimate:

\begin{lemme}
Let $x \in \hat{B}_{\ell} \backslash \hat{B}_{\ell+1}$. Then 
\ben \label{claim1}
\bal
 \big| \hpv(x) &-  \Pi(\hpv)(x) \big| \\
 & \lesssim \ve \mu_\ell^2   \sum_{j \ge \ell} a_j + \sum_{j \le \ell-1} \left(\frac{\mu_\ell}{\mu_j}\right)^\pui  \Lambda_j   +  \frac{\ve \mu_\ell^2\big( 1 +  \sum_{j \le \ell} a_j \big)}{(1+|x|)^{n-4}} \\
 &+\frac{ \Lambda_\ell}{(1+|x|)^{n-2}} + \left(  \frac{\mu_\ell}{\mu_{\ell-1}} \right)^\pui  \frac{1}{(1+|x|)^2} \\
 & \sum_{j \ge \ell+1} \mu_{j-1}^2 a_j \hat{W}_j + \frac{\mu_\ell^2 a_\ell}{(1+|x|)^{n-4}} \mathds{1}_{\textrm{n.l.c.f}} \\
 & +  (1 +a_\ell)  \Big( \frac{\mu_{\ell+1}}{\mu_\ell} + |x - z_{\ell+1}|\Big)^{2} \hat{W}_{\ell+1}(x) \mathds{1}_{\{  |x - z_{\ell+1}| \le 2 \}},  \eal 
\een
where $ \Pi(\hpv)$ is defined in Lemma \ref{lemmerep}. The notation $\mathds{1}_{\textrm{n.l.c.f}}$ indicates that this term vanishes if $(M,g)$ is locally conformally flat.
\end{lemme} 

\begin{proof}
We apply \eqref{formulerep} to $\hpv$. As already mentioned, if $\ell=k$ we do not assume anything, but if $\ell \le k-1$ we assume that $(H_{\ell+1})$ holds. Let $x \in \hat{B}_{\ell} \backslash \hat{B}_{\ell+1}$ (which only implies $x \in \hat{B}_k$ if $\ell=k$). 

\medskip

\textbullet\ First, straightforward computations using \eqref{defai} and Giraud's lemma  show that 
\be 
\int_{\hat{B}_{\ell} \backslash \hat{B}_{\ell+1}}  \ve \mu_\ell^2 | \hpv(y)| |x-y|^{2-n} dy \lesssim \frac{\ve \mu_\ell^2 a_{\ell}}{(1+|x|)^{n-4}}. 
\ee
Since $x \in \hat{B}_{\ell} \backslash \hat{B}_{\ell+1}$ we also obtain, if $\ell \le k-1$,
\be 
\bal
\int_{\hat{B}_{\ell+1}}  \ve \mu_\ell^2 & | \hpv(y)| |x-y|^{2-n} dy \\
& \lesssim  \Big( \frac{\mu_{\ell+1}}{\mu_\ell} + |x - z_{\ell+1}|\Big)^{2-n} \ve \mu_{\ell}^2 \int_{\hat{B}_{\ell+1}} |\hpv(y)| dy  \\
& \lesssim  \Big( \frac{\mu_{\ell+1}}{\mu_\ell} + |x - z_{\ell+1}|\Big)^{2-n} \ve \mu_{\ell}^{1 - \frac{n}{2}} \int_{B_{\ell+1}} |\vp_{\ve}(y)| dv_g(y) \\
& \lesssim \big( \sum_{j \ge \ell+1} \mu_{j-1}^2 a_j\big) \hat{W}_{\ell+1}(x) + \left( \frac{\mu_{\ell}}{\mu_{\ell-1}} \right)^{\pui} \frac{1}{(1+|x|)^2}, \\
\eal
\ee
where we used the induction property $(H1_{\ell+1})$ to obtain the last line. Gathering these two estimates yields
\ben  \label{proplin1}
\bal
\int_{\hat{B}_{\ell}} & \ve \mu_\ell^2 | \hpv(y)| |x-y|^{2-n} dy\lesssim  \frac{\ve \mu_\ell^2 a_{\ell}}{(1+|x|)^{n-4}}  \\
& + \big( \sum_{j \ge \ell+1} \mu_{j-1}^2 a_j\big) \hat{W}_{\ell+1}(x) + \left( \frac{\mu_{\ell}}{\mu_{\ell-1}} \right)^{\pui} \frac{1}{(1+|x|)^2}.
\eal
\een 

\medskip

\textbullet\ Using now \eqref{hypf}, \eqref{tech1} and \eqref{tech2} we obtain 

\ben \label{proplin2}
\bal
\int_{\hat{B}_{\ell}} &\mu_\ell^{\frac{n+2}{2}} |k_{\ve}(x_\ell)| |x-y|^{2-n} dy \lesssim \frac{\ve \mu_\ell^2}{(1+|x|)^{n-4}} + \frac{\mu_\ell^n}{(1+|x|)^2} \mathds{1}_{\textrm{l.c.f.}}   \\
& +  \left( \frac{\mu_{\ell}}{\mu_{\ell-1}} \right)^{\pui} \frac{1}{(1+|x|)^2} +  \Big( \frac{\mu_{\ell+1}}{\mu_\ell} + |x - z_{\ell+1}|\Big)^{2} \hat{W}_{\ell+1} \mathds{1}_{\{  |x - z_{\ell+1}| \le 2 \}}.
\eal
\een

\medskip

\textbullet\ We now compute the terms involving the $\lambda_{ij}$. On one side, Giraud's lemma together with straightforward computations yields
\ben \label{proplin3}
\bal
\int_{\hat{B}_\ell} \Bigg( \frac{\Lambda_{\ell}}{(1+|x|)^{n+2}} &+   \sum_{j \le \ell-1}  \left( \frac{\mu_\ell}{\mu_{j}} \right)^{\frac{n+2}{2}}\Lambda_j\Bigg) |x-y|^{2-n} dy \\
& \lesssim \frac{\Lambda_\ell}{(1+|x|)^{n-2}} + \frac{\mu_{\ell-1}}{\mu_{\ell}} \sum_{j \le \ell-1}  \left( \frac{\mu_\ell}{\mu_{j}} \right)^{\frac{n+2}{2}}\Lambda_j \\
& \lesssim  \frac{\Lambda_\ell}{(1+|x|)^{n-2}} +   \ve \mu_\ell^2 \sum_{j \le \ell-1}\Lambda_j,
\eal
\een 
where the sum is empty if $\ell=1$. On the other side, straightforward computations yield 
\be 
\bal
\int_{\hat{B}_\ell}  \sum_{i\ge \ell+1}  \Lambda_{i} \hat{W}_i(y)^{2^*-1}|x-y|^{2-n} dy & \lesssim \sum_{i\ge \ell+1} \Lambda_i \hat{W}_i(x).
\eal
\ee
This sum is empty if $\ell=k$. If $ \ell \le k-1$, we use in addition the induction property $(H2_{\ell+1})$: by \eqref{growthmuell} and by \eqref{rapportsbulles} we get 
\ben \label{proplin4}
\bal
\int_{\hat{B}_\ell} & \sum_{i\ge \ell+1}  \Lambda_{i} \hat{W}_i(y)^{2^*-1}|x-y|^{2-n} dy \lesssim   \left( \frac{\mu_{\ell+1}}{\mu_{\ell}} \right)^{\pui} \frac{\sum_{j=1}^k a_j}{(1+|x|)^{n-2}}. \\
\eal
\een

\medskip

\textbullet\ We now turn to the third line in the right-hand side of \eqref{eqhpv2}. If $ \ell \le k-1$, using the induction property $(H1_{\ell+1})$ we get that 
\ben \label{proplin5}
\bal
\int_{\hat{B}_{\ell+1}}  \sum_{i\ge \ell+1}  \hat{W}_i(y)^{2^*-2} |\hpv(y)||x-y|^{2-n} dy & \lesssim  \big( \sum_{j \ge \ell+1} \mu_{j-1}^2 a_j\big) \hat{W}_{\ell+1}(x) \\
& + \ve \mu_\ell^2 \frac{\big(\sum_{j\le \ell} a_j \big)}{(1+|x|)^{n-2}}.
\eal
\een
With the definition of $a_\ell$ in \eqref{defai} we obtain with Giraud's lemma
\ben \label{proplin6}
\int_{\hat{B}_{\ell} \backslash \hat{B}_{\ell+1}} \left( \frac{\mu_\ell}{\mu_{\ell-1}} \right)^\pui (1 + |y|)^{n-6}|\hpv(y)| |x-y|^{2-n} dy   \lesssim   \left( \frac{\mu_\ell}{\mu_{\ell-1}} \right)^\pui  \frac{a_\ell}{(1+|x|)^2}, \\
\een 
while using \eqref{tech2} below yields
\ben \label{proplin7}
\bal
\int_{\hat{B}_{\ell} \backslash \hat{B}_{\ell+1}}   & \left( \frac{\mu_{\ell+1}}{\mu_{\ell}} \right)^\pui \frac{(1+|y|)^{n-6}}{\big( \frac{\mu_{\ell+1}}{\mu_\ell} + |y - z_{\ell+1}|\big)^{n-2}} |\hpv(y)| |x-y|^{2-n} dy \\ 
& \lesssim a_\ell  \Big( \frac{\mu_{\ell+1}}{\mu_\ell} + |x - z_{\ell+1}|\Big)^{2} \hat{W}_{\ell+1}(x) \mathds{1}_{\{  |x - z_{\ell+1}| \le 2 \}}.
\eal
\een

\medskip

\textbullet\ Mimicking the previous computations we also obtain, using $(H1_{\ell+1})$ when $\ell \le k-1$, that when $(M,g)$ is locally conformally flat
\ben \label{proplin8}
\bal
\int_{\hat{B}_{\ell}} \mu_\ell^{n-2} (1 + |y|)^{n-6}|\hpv(y)| |x-y|^{2-n} dy &  \lesssim  \big( \sum_{j \ge \ell+1} \mu_{j-1}^2 a_j\big) \hat{W}_{\ell+1}(x) \\
&  +  \frac{\mu_{\ell}^{n-2}(1+a_\ell) }{(1+|x|)^2} 
\eal
\een
holds, while when $(M,g)$ is not locally conformally flat around $\xi_1$ we obtain:
\ben \label{proplin9}
\bal
\int_{\hat{B}_{\ell}}  \mu_{\ell}^4 (1+ |y|^2) |\hpv(y)| |x-y|^{2-n} dy  & \lesssim   \frac{\mu_\ell^4(1+a_\ell)}{(1+|x|)^{n-6}} + \frac{\mu_\ell^4 \sum_{j \ge \ell}a_j }{(1+|x|)^{n-2}}\\
 & + \sum_{j \ge \ell+1} \mu_{j-1}^2 a_j \hat{W}_j(x) .
\eal
\een

\medskip

\textbullet \ The only terms in \eqref{eqhpv2}  that remain to be estimated involve derivatives of $\hpv$. We control them with the following Claim:

\begin{claim} \label{claimder2}
Let $1 \le i \le k$. For any $x \in C_i$, where $C_i$ is as in \eqref{defCi}, there holds
\[\bal 
|\vp_{\ve}(x)| &+ \tilde{\theta}_i(x) |\nabla \vp_{\ve}(x)| + \tilde{\theta}_i(x)^2 |\nabla^2 \vp_{\ve}(x)|  \\
& \lesssim (a_i  + \sum_{p\ge i} \Lambda_p) W_i(x) + \mu_{i-1}^{1- \frac{n}{2}} \left( \frac{\mu_i}{\theta_i(x)}\right)^2  + \frac{\theta_{i+1}(x)^2}{\mu_{i}^2} W_{i+1}(x) \mathds{1}_{\{ \theta_{i+1}(x) \le \mu_{i}\}} \\
&+   \sum_{p\le i-1} \Lambda_p \mu_p^{1 - \frac{n}{2}} , 
\eal \]
where we have let 
\[ \tilde{\theta}_i(x) = \min \big( \theta_i(x), \theta_{i+1}(x) \big). \]
 \end{claim}
 
 \begin{proof}
 The proof is a simple application of elliptic regularity results. We keep the notations of \eqref{defhpv}. By \eqref{eqhpv2} it is easily seen that for any $y \in \hat{B}_i \backslash \hat{B}_{i+1}$
 \[\bal
  \Big| \Big(& \triangle_{\hat{g}_i} + O(\mu_i)^2 \Big) \Big( \hpv -  \sum \limits_{\underset{q =0 .. n }{p=1..k}}    \lambda_{p,q} \hat{Z}_{p,q} \Big)(y) \Big| \lesssim \frac{a_i}{(1+|y|)^{n+2}}  \\
  &  +\frac{\ve \mu_i^2 }{(1+|y|)^{n-2}}  + \left( \frac{\mu_{i+1}}{\mu_{i}}\right)^{\pui} \frac{1}{(1+|y|)^4} \frac{1}{\Big( \frac{\mu_{i+1}}{\mu_i} + |z_{i+1} - y| \Big)^{n-2}}.
 \eal
 \] 
 Let $(y_\ve)_\ve$ be a sequence of points in $\hat{B}_i \backslash \hat{B}_{i+1}$. 
 
 We first assume that $|y_\ve - z_{i+1}| \le 2$. For any $z \in B_0(3/2) \backslash B_0(2/3)$ we let 
 \[ \tilde{\vp}_\ve(z) = \Big(\hpv  -  \sum \limits_{\underset{q =0 .. n }{p=1..k}}    \lambda_{p,q} \hat{Z}_{p,q}\Big) \big(z_{i+1} + \alpha_i z \big), \]
 where we have let $\alpha_i = |y_\ve - z_{i+1}|$. Letting $\tilde{g}_i(z) = \hat{g}_i\big( z_{i+1} + \alpha_i z \big)$ and since $ \alpha_i \lesssim  \sqrt{\frac{\mu_{i-1}}{\mu_i}}$ by definition  it is then easily seen that $\tilde{\vp}_\ve$ satisfies
 \[ \bal
  \Big|  \triangle_{\tilde{g}_i}  \tilde{\vp}_\ve(z)+ O\big(\mu_i \mu_{i-1} |\tilde{\vp}_\ve(z)|\big) \Big| &\lesssim \alpha_i^2 \Big( a_i +\ve \mu_i^2  \Big) + \left( \frac{\mu_{i+1}}{\mu_{i}}\right)^{\pui} \alpha_i^2 \Big( \frac{\mu_{i+1}}{\mu_i} + \alpha_i\Big)^{2-n}.
 \eal
 \]
 Since $ \alpha_i \lesssim  \sqrt{\frac{\mu_{i-1}}{\mu_i}}$, $\tilde{g}_i$ is $C^k$-close to the Euclidean metric for any $k \ge1$. Standard elliptic theory then yields 
 \ben \label{claimder21} \bal 
 \Vert \tilde{\vp}_\ve \Vert_{C^2\big(B_0(5/4) \backslash B_0(4/5)\big)} & \lesssim \alpha_i^2 \Big( a_i+\ve \mu_i^2  \Big)   + \left( \frac{\mu_{i+1}}{\mu_{i}}\right)^{\pui} \alpha_i^2 \Big( \frac{\mu_{i+1}}{\mu_i} + \alpha_i\Big)^{2-n}.
 \eal \een
 Since $y_{\ve} \in \hat{B}_i \backslash \hat{B}_{i+1}$, by letting $\tilde{y}_\ve = \exp_{\xi_i}^{g_{\xi_i}}(\mu_i y_\ve)$  there holds $\alpha_i \lesssim \frac{\theta_{i+1}(\tilde{y}_\ve)}{\mu_i}$ and $\frac{\theta_{i+1}(\tilde{y}_\ve)}{\mu_i} \lesssim \alpha_i$. Scaling back \eqref{claimder21} to the original variables yields, for all $x \in B_i$ with $\theta_{i+1}(x) \le \mu_i$:
 \[\bal 
|\vp_{\ve}(x)| &+ \theta_{i+1}(x) |\nabla \vp_{\ve}(x)| + \theta_{i+1}(x)^2 |\nabla^2 \vp_{\ve}(x)|  \\
& \lesssim  (\sum_{p\ge i} \Lambda_p) W_i(x) + (a_i + \ve \mu_i^2) \frac{\theta_{i+1}(x)^2}{\mu_i^2} \mu_i^{1 - \frac{n}{2}}+  \frac{\theta_{i+1}(x)^2}{\mu_{i}^2} W_{i+1}(x) \mathds{1}_{\{ \theta_{i+1}(x) \le \mu_{i}\}} \\
&+  \sum_{p\le i-1} \Lambda_p \mu_p^{1 - \frac{n}{2}} \\
& \lesssim (a_i + \sum_{p\ge i} \Lambda_p) W_i(x) +  \mu_{i-1}^{1- \frac{n}{2}} \left( \frac{\mu_i}{\theta_i(x)}\right)^2 + \frac{\theta_{i+1}(x)^2}{\mu_{i}^2} W_{i+1}(x) \mathds{1}_{\{ \theta_{i+1}(x) \le \mu_{i}\}} \\
&+  \sum_{p\le i-1} \Lambda_p \mu_p^{1 - \frac{n}{2}}  ,\\
\eal \]
which proves Claim \ref{claimder2} in the case $\theta_{i+1}(x) \le \mu_i$. We used here that $\mu_i^{1 - \frac{n}{2}} \lesssim W_i(x)$ when $x \in B_i$ and $\theta_{i+1}(x) \le \mu_i$.

\medskip

Assume then that  $|y_\ve - z_{i+1}| \ge 2$. Let, for any $z \in B_0(3/2) \backslash B_0(2/3)$,
 \[ \tilde{\vp}_\ve(z) = \Big(\hpv  -  \sum \limits_{\underset{q =0 .. n }{p=1..k}}    \lambda_{p,q} \hat{Z}_{p,q}\Big) \big(\alpha_i z \big), \]
 where again $\alpha_i = |y_\ve - z_{i+1}|$. Remember that $|z_{i+1}| \le 1$. Letting $\tilde{g}_i(z) = \hat{g}_i\big(  \alpha_i z \big)$, and since $1+ |z_{i+1} - \alpha_i z| \gtrsim  1+\alpha_i \gtrsim 1$ for any $z \in B_0(3/2) \backslash B_0(2/3)$, it is then easily seen that $\tilde{\vp}_\ve$ satisfies
 \[ \bal
  \Big|  \triangle_{\tilde{g}_i}  \tilde{\vp}_\ve(z)+ O\big(\mu_i \mu_{i-1} |\tilde{\vp}_\ve(z)|\big) \Big| & \lesssim \frac{a_i}{(1+\alpha_i)^{n-2}} + \frac{\mu_i^{n-2}\mathds{1}_{\textrm{l.c.f}}  }{(1+\alpha_i)^{2}} + \frac{\ve \mu_i^2 }{(1+\alpha_i)^{n-4}}  \\
  & + \left( \frac{\mu_{i+1}}{\mu_{i}}\right)^{\pui} \frac{1}{(1+\alpha_i)^n} .
   \eal
 \]
 Scaling back to the original variables as before proves the Claim when $x \in B_i $ and $\theta_{i+1}(x) \ge \mu_i$, and concludes the proof of Claim \ref{claimder2}. 
 \end{proof}
 
 We now conclude the proof of \eqref{claim1}.

\medskip

\textbullet\ A straightforward application of Claim \ref{claimder2} yields, together with $(H2_{\ell+1})$ in the case where $\ell \le k-1$:

\ben \label{proplin10}
\bal
 &\int_{\frac32 \sqrt{\frac{\mu_{\ell-1}}{{\mu_\ell}}} \le |y| \le 2  \sqrt{\frac{\mu_{\ell-1}}{{\mu_\ell}}}}  \Big(\frac{\mu_{\ell}}{{\mu_{\ell-1}}} |\hpv(y)|  + \sqrt{\frac{\mu_{\ell}}{{\mu_{\ell-1}}}} |\nabla \hpv(y)| \Big) |x-y|^{2-n} dy \\
 & \lesssim \ve \mu_\ell^2 \sum_{j\ge \ell} a_j +  \sum_{j\le \ell-1} \left( \frac{\mu_\ell}{\mu_j} \right)^\pui\Lambda_j   +  \left( \frac{\mu_\ell}{\mu_{\ell-1}} \right)^\pui\frac{1}{(1+|x|)^2} \\
 & + \frac{\ve \mu_\ell^2 \big(1 +\sum_{j \le \ell-1} a_j\big)}{(1+|x|)^{n-4}}. \\
\eal
\een

If $(M,g)$ is locally conformally flat, estimates \eqref{proplin1} -- \eqref{proplin8} and \eqref{proplin10} together with \eqref{formulerep} conclude the proof of \eqref{claim1}.

\medskip

\textbullet \ It therefore remains to conclude the proof in the non-locally conformally flat case. In that case, using \eqref{defai}, $(H1_{\ell+1})$ and $(H2_{\ell+1})$ if $\ell\le k-1$, and Claim \ref{claimder2}, we have
\ben \label{proplin11}
\bal
& \int_{\hat{B}_\ell} \Big(\mu_\ell^3 |y|^2 |\nabla \hpv(y)| + \mu_\ell^2 |y|^2 |\nabla^2 \hpv(y)|\Big) |x-y|^{2-n} dy \lesssim \frac{\mu_\ell^2 (a_\ell + \Lambda_\ell)} {(1+|x|)^{n-4}} \\
 & +\ve \mu_\ell^2  \frac{\sum_{j \le \ell-1} a_j}{(1+|x|)^{n-2}} +   \big( \sum_{j \ge \ell+1} \mu_{j-1}^2 a_j\big) \hat{W}_{\ell+1}(x)  \\
 & + \Big( \frac{\mu_{\ell+1}}{\mu_\ell} + |x - z_{\ell+1}|\Big)^{2} \hat{W}_{\ell+1}(x) \mathds{1}_{\{  |x - z_{\ell+1}| \le 1 \}} + \frac{\ve \mu_\ell^2}{(1+|x|)^{n-4}}  .\eal
\een
Combining \eqref{proplin1}--\eqref{proplin9} with \eqref{proplin11} and \eqref{formulerep} then concludes the proof of \eqref{claim1} in the non-locally conformally flat case.
\end{proof}

\medskip

\textbf{Step $3$: Estimation of $| \Pi(\hpv)(x)|$.} We prove in this part the following

\begin{claim}
For all $0 \le j \le n$,
\[  \int_{\RR^n} \hpv V_j U_0^{2^*-2} dy = O( \ve \mu_\ell^2 \sum_{j\le \ell-1} a_j ) +  \left \{ \bal & O(\ve \mu_\ell^2 a_\ell) & (M,g) \textrm{ l.c.f. }  \\ & O(\mu_\ell^2 a_\ell) & \textrm{ otherwise } \eal \right. .\]
As a consequence, for any  $x \in \hat{B}_{\ell} \backslash \hat{B}_{\ell+1}$:
\ben  \label{claimproj}
 | \Pi(\hpv)(x)| \lesssim  \frac{\ve \mu_\ell^2 \big(1 + \sum_{j \le \ell} a_j\big)}{(1+|x|)^{n-4}} +  \frac{\mu_\ell^2 a_\ell}{(1+|x|)^{n-2}} \mathds{1}_{\textrm{n.l.c.f}}.
 \een
\end{claim}

\begin{proof}
Let $x \in \hat{B}_{\ell} \backslash \hat{B}_{\ell+1}$. By definition we have 
 \[ \Pi(\hpv)(x) = \sum_{j=0}^n \frac{1}{\Vert \nabla V_j \Vert_{L^2(\RR^n)}^2}\Big(  \int_{\RR^n} \hpv V_j U_0^{2^*-2} dy\Big) V_j(x), \]
where $U_0$ is as in \eqref{defB0}. Since $|V_j(y)| \lesssim U_0(y)$ we only need to prove the first part of the Claim to obtain \eqref{claimproj}. Let $0 \le j \le n$.  By definition of $\vp_{\ve}$ in Proposition \ref{proplinbase} we have $\vp_\ve \in  K_k^{\perp}$, which implies that 
\[ \int_M \vp_\ve \Big( \triangle_g + c_n S_g + \ve h \Big) Z_{\ell,j} dv_g = 0.\]
By \eqref{defai}, \eqref{defhpv}, by \eqref{lapZ0} and \eqref{lapZi} and since $\hat{Z}_{\ell,j} = V_j$ we have:
\ben \label{claimproj1} \bal
0 & =  \int_M \vp_\ve \Big( \triangle_g + c_n S_g + \ve h \Big) Z_{\ell,j} dv_g \\
 & = \int_{B_\ell} \vp_\ve \Big( \triangle_g + c_n S_g + \ve h \Big) Z_{\ell,j} dv_g + O\Big( \sum_{j\le \ell-1} \left( \frac{\mu_\ell}{\mu_j}\right)^\pui a_j\Big) \\
 & = \int_{\hat{B}_\ell} \hpv \Big( \triangle_{\hat{g}_\ell}(y) + \mu_\ell^2 c_n S_{\hat{g}_\ell}(y_\ell) \Big) V_j(y) dy + O(\ve \mu_\ell^2 \sum_{j \le \ell} a_j), \\
  \eal \een
  where $y_\ell = \exp_{\xi_\ell}^{g_{\xi_\ell}}(\mu_\ell y)$. If $(M,g)$ is locally conformally flat we have 
  \[ \Big( \triangle_{\hat{g}_\ell}(y) + \mu_\ell^2 c_n S_{\hat{g}_\ell}(y_\ell) \Big) V_j(y)  = (2^*-1) U_0^{2^*-2} V_j, \]
  so that, with \eqref{growthmuell} and since $\hpv$ is supported in $\hat{B}_\ell$,
    \[ \int_{\RR^n}  \hpv U_0^{2^*-2} V_j dy  = O \Big( \ve \mu_\ell^2 \big( 1 + \sum_{j \le \ell} a_j \big) \Big). \]
If $(M,g)$ is not locally conformally flat we have 
  \ben \label{step31}
   \Big( \triangle_{\hat{g}_\ell}(y) + \mu_\ell^2 c_n S_{\hat{g}_\ell}(y_\ell) \Big) V_j(y)  =  (2^*-1) U_0^{2^*-2} V_j + O(\mu_\ell^2 (1+|y|)^{2-n}), 
   \een
  so that 
  \[  \int_{\hat{B}_\ell} \hpv \Big( \triangle_{\hat{g}_\ell}(y) + \mu_\ell^2 c_n S_{\hat{g}_\ell}(y_\ell) \Big) V_j(y) dy = \int_{\hat{B}_\ell} \hpv U_0^{2^*-2} V_j dy + O(\mu_\ell^2 a_\ell). \]
  Combining with \eqref{claimproj1} concludes the proof of \eqref{claimproj}.
\end{proof}

\medskip

\textbf{Step $4$: improvement of \eqref{claim1} into an optimal estimate.} We prove in this step the following improvement of \eqref{claim1}:

\begin{claim} 
For any $x \in \hat{B}_\ell \backslash \hat{B}_{\ell+1}$ there holds
\ben \label{claim1bis}
\bal
 | \hpv(x)| & \lesssim \ve \mu_\ell^2   \sum_{j \ge \ell} a_j +  \sum_{j\le \ell-1} \left( \frac{\mu_\ell}{\mu_j} \right)^\pui\Lambda_j  +  \frac{\ve \mu_\ell^2\big(1+  \sum_{j \le \ell} a_j \big)}{(1+|x|)^{n-4}}\\
 & + \frac{ \Lambda_\ell}{(1+|x|)^{n-2}} + \left(  \frac{\mu_\ell}{\mu_{\ell-1}} \right)^\pui  \frac{1}{(1+|x|)^2} \\
 &+   \sum_{j \ge \ell+1} \mu_{j-1}^2 a_j \hat{W}_j  +   \frac{\mu_\ell^4 a_\ell}{(1+|x|)^{n-6}} \mathds{1}_{\textrm{n.l.c.f}}\\
 & +  (1 +a_\ell)  \Big( \frac{\mu_{\ell+1}}{\mu_\ell} + |x - z_{\ell+1}|\Big)^{2} \hat{W}_{\ell+1}(x) \mathds{1}_{\{  |x - z_{\ell+1}| \le 2 \}} . \\
 \eal 
\een
\end{claim} 

\begin{proof}
First, in the locally conformally flat case, \eqref{claim1bis} follows from combining \eqref{claim1} with \eqref{claimproj}. We therefore assume that $(M,g)$ is not locally conformally flat. Combining \eqref{claim1} and \eqref{claimproj} we have, for any $x \in \hat{B}_\ell \backslash \hat{B}_{\ell+1}$:
\ben \label{step41}
\bal
 | \hpv(x)|  & \lesssim \ve \mu_\ell^2  \sum_{j \ge \ell} a_j +  \sum_{j\le \ell-1} \left( \frac{\mu_\ell}{\mu_j} \right)^\pui\Lambda_j  +  \frac{\ve \mu_\ell^2\big(1+  \sum_{j \le \ell} a_j \big)}{(1+|x|)^{n-4}}\\
 & + \frac{ \Lambda_\ell}{(1+|x|)^{n-2}} + \sum_{j \ge \ell+1} \mu_{j-1}^2 a_j \hat{W}_j + \frac{\mu_\ell^2 a_\ell}{(1+|x|)^{n-4}}  \\
 & +  \left(  \frac{\mu_\ell}{\mu_{\ell-1}} \right)^\pui  \frac{1}{(1+|x|)^2}  \\
& +  (1 +a_\ell)  \Big( \frac{\mu_{\ell+1}}{\mu_\ell} + |x - z_{\ell+1}|\Big)^{2} \hat{W}_{\ell+1}(x) \mathds{1}_{\{  |x - z_{\ell+1}| \le 2 \}}.
  \eal
\een
With \eqref{step41} we can now improve \eqref{claimproj}. Using \eqref{step31} together with \eqref{step41} now yields
\[ \bal
  \Bigg|  \int_{\hat{B}_\ell} \hpv & \Big( \triangle_{\hat{g}_\ell}(y) + \mu_\ell^2 c_n S_{\hat{g}_\ell}(y_\ell) \Big) V_j(y) dy -   \int_{\hat{B}_\ell} \hpv U_0^{2^*-2} V_j dy  \Bigg| \\
  & \lesssim  \ve \mu_\ell^2  \sum_{j =1}^k a_j   + \Lambda_\ell +  \sum_{j\le \ell-1} \left( \frac{\mu_\ell}{\mu_j} \right)^\pui\Lambda_j ,
  \eal \] 
  where we used \eqref{growthmuell} to claim that in the non-locally conformally flat case we have $\mu_\ell^2 \lesssim \ve$ and thus $\mu_\ell^4 \lesssim \ve \mu_\ell^2$ for all $\ell \ge1$. The other steps in the proof of \eqref{claimproj} are left unchanged and we obtain now
\ben  \label{claimprojbis}
 | \Pi(\hpv)(x)| \lesssim \ve \mu_\ell^2 \sum_{j \ge \ell} a_j +  \sum_{j\le \ell-1} \left( \frac{\mu_\ell}{\mu_j} \right)^\pui\Lambda_j +  \frac{\Lambda_\ell + \ve\mu_\ell^2\sum_{j \le \ell-1} a_j}{(1+|x|)^{n-2}}.
 \een
Going back to the proof of \eqref{claim1} it is easily seen that, when $(M,g)$ is not locally conformally flat, the term $ \mu_\ell^2 a_\ell(1+|x|)^{4-n}$ comes from the contribution of  \eqref{proplin11}. We are now going to use \eqref{step41} to compute \eqref{proplin11} again. First, a straightforward adaptation of the proof of Claim \ref{claimder2}, using now \eqref{step41} to control $\vp_\ve$, yields the following improved estimate on the derivatives of $\vp_\ve$: for all $x \in C_\ell$, 
\[\bal 
|\vp_{\ve}(x)| &+ \tilde{\theta}_\ell(x) |\nabla \vp_{\ve}(x)| + \tilde{\theta}_\ell(x)^2 |\nabla^2 \vp_{\ve}(x)|  \\
& \lesssim\Lambda_\ell W_\ell(x) + \mu_{\ell-1}^{1- \frac{n}{2}} \big( 1 + \sum_{j \ge \ell+1} a_j \big) \left( \frac{\mu_\ell}{\theta_\ell(x)}\right)^2 + a_\ell \theta_\ell^2 W_\ell(x)  \\
& +(1+ a_\ell) \frac{\theta_{\ell+1}(x)^2}{\mu_{\ell}^2} W_{\ell+1}(x) \mathds{1}_{\{ \theta_{\ell+1}(x) \le \mu_{\ell}\}}+   \sum_{p\le \ell-1} \Lambda_p \mu_p^{1 - \frac{n}{2}}, 
\eal \]
where $C_\ell$ is as in \eqref{defCi} and where we have again let $\tilde{\theta_\ell}(x) = \min(\theta_{\ell+1}(x), \theta_\ell(x))$. The latter estimate now yields, together with $(H1_{\ell+1})$ when $\ell\le k-1$:
\ben  \label{proplin11bis}
\bal
& \int_{\hat{B}_\ell} \Big(\mu_\ell^3 |y|^2 |\nabla \hpv(y)| + \mu_\ell^2 |y|^2 |\nabla^2 \hpv(y)|\Big) |x-y|^{2-n} dy \lesssim  \ve \mu_\ell^2  \sum_{j \ge \ell} a_j \\
 & +  \frac{\Lambda_\ell}{(1+|x|)^{n-2}} + \frac{\ve \mu_\ell^2(1 + \sum_{j \le \ell} a_j)}{(1+|x|)^{n-4}} +   \big( \sum_{j \ge \ell+1} \mu_{j-1}^2 a_j\big) \hat{W}_{\ell+1}(x) 
 \\
 & + \Big( \frac{\mu_{\ell+1}}{\mu_\ell} + |x - z_{\ell+1}|\Big)^{2} \hat{W}_{\ell+1}(x) \mathds{1}_{\{  |x - z_{\ell+1}| \le 1 \}}  +  \frac{\mu_\ell^4 a_\ell}{(1+|x|)^{n-6}} \mathds{1}_{\textrm{n.l.c.f}}.
  \eal
\een
Combining \eqref{proplin1} -- \eqref{proplin9} with \eqref{claimprojbis} and  \eqref{proplin11bis} finally proves \eqref{claim1bis}.
\end{proof}

\medskip

\textbf{Step $5$: estimation of $\Lambda_\ell$ and conclusion.} Let $j \in \{0, \dots, n\}$ be fixed. We integrate \eqref{eqlin} against $Z_{\ell,j}$: using \eqref{growthmuell}, \eqref{psZi}, \eqref{psZik}, \eqref{hypf} and \eqref{estlambdaij} we obtain
\[\bal
 |\lambda_{\ell,j}| + o(\Lambda_\ell)  & = O (\ve \mu_\ell^2)  + \int_M \vp_\ve \Big(\triangle_g + c_n S_g + \ve h - f'\big( \sum_{j=1}^k \kappa_j W_j \big) \Big)Z_{\ell,j} dv_g \\
 & = O (\ve \mu_\ell^2)  + O \Big( \sum_{j \le \ell-1} \left( \frac{\mu_\ell}{\mu_j} \right)^\pui a_j +  \sum_{j \ge \ell+1} \left( \frac{\mu_j}{\mu_\ell} \right)^\pui a_j  \Big)  \\
 & + \int_{B_\ell\backslash B_{\ell+1}} \vp_\ve \Big(\triangle_g + c_n S_g + \ve h - f'\big( \sum_{j=1}^k \kappa_j W_j \big) \Big)Z_{\ell,j} dv_g \\
 & = O\big(\ve\mu_\ell^2 \big(1 + \sum_{j=1}^k a_j \big)\big) \\ 
 & +  \int_{B_\ell\backslash B_{\ell+1}} \vp_\ve \Big(\triangle_g + c_n S_g + \ve h - (2^*-1)W_\ell^{2^*-2} \Big)Z_{\ell,j} dv_g 
 \eal \]
 where we used \eqref{diffbulle} and where we also used \eqref{defmu} to claim that $\mu_{\ell+1}/\mu_\ell \lesssim \mu_\ell/\mu_{\ell-1}$ for $1 \le \ell \le k$. Using \eqref{lapZ0} and \eqref{lapZi} and controlling $\hpv$ by \eqref{claim1bis} finally yields, after straightforward computations:
 \[  |\lambda_{\ell,j}| + o(\Lambda_\ell)   = O\big(\ve\mu_\ell^2 \big(1 + \sum_{j=1}^k  a_j \big)\big) . \] 
 By \eqref{Lambdai} this yields in turn
 \ben \label{H2rec}
 \Lambda_\ell \lesssim \ve \mu_\ell^2 \Big( 1 +  \sum_{j=1}^k a_j \Big), 
 \een
 which proves that $(H2_{\ell})$ defined by \eqref{H2} holds true. By plugging \eqref{H2rec} into \eqref{claim1bis} we obtain that, for any $x \in \hat{B}_\ell \backslash \hat{B}_{\ell+1}$ and for any $1 \le \ell \le k$:
 \ben \label{quasiH1}\bal
 | \hpv(x)| & \lesssim \ve \mu_\ell^2   \sum_{j \ge \ell} a_j +  \sum_{j\le \ell-1} \left( \frac{\mu_\ell}{\mu_j} \right)^\pui\Lambda_j +  \frac{\ve \mu_\ell^2\big( 1+\sum_{j \le \ell} a_j \big)}{(1+|x|)^{n-4}}
\\
& +   \left( \frac{\mu_\ell}{\mu_{\ell-1}} \right)^\pui \frac{1}{(1+|x|)^{n-2}}  +  \frac{ \mu_\ell^4 a_\ell}{(1+|x|)^{n-6}} \mathds{1}_{n.l.c.f} \\
& + \sum_{j \ge \ell+1} \mu_{j-1}^2 a_j \hat{W}_j \\
& +  (1 +a_\ell)  \Big( \frac{\mu_{\ell+1}}{\mu_\ell} + |x - z_{\ell+1}|\Big)^{2} \hat{W}_{\ell+1}(x) \mathds{1}_{\{  |x - z_{\ell+1}| \le 2 \}} .\\
 \eal \een
 
 \medskip
 
 We can now conclude the proof of Lemma \ref{lemmeinduction}. If $(M,g)$ is locally conformally flat, $(H1_\ell)$ follows from \eqref{quasiH1} by scaling back in the original variables.  In the non-locally conformally flat case we use \eqref{growthmuell} to claim that $\mu_\ell^2 \lesssim \ve$ and thus 
\[  \frac{\mu_\ell^4 a_\ell}{(1+|x|)^{n-6}} \lesssim \ve \mu_\ell^2 a_\ell. \]
Scaling back \eqref{quasiH1} to the original variables and using \eqref{growthmuell} we obtain:
\[ \bal
 |\vp_{\ve}(x)| & \lesssim \ve \mu_{\ell}^{3 - \frac{n}{2}} \sum_{j\ge \ell} a_j  +   \sum_{j\le \ell-1} \mu_j^{1 - \frac{n}{2}}\Lambda_j +  \mu_{\ell-1}^{1 - \frac{n}{2}} \left( \frac{\mu_{\ell}}{\theta_{\ell}(x)} \right)^2 \\
 &+ \sum_{j\ge \ell+1}\mu_{j-1}^2 a_j W_j (x) + \ve \big(1 + \sum_{j\le \ell} a_j \big) \theta_\ell^2(x) W_\ell(x)  \\
 & +   (1+a_\ell)\frac{\theta_{\ell+1}(x)^2}{\mu_{\ell}^2} W_{\ell+1}(x) \mathds{1}_{\{ \theta_{\ell+1}(x) \le \mu_{\ell}\}}, \\
  \eal
\]
which proves that \eqref{H1} holds true. Since in the case $\ell = k$ we did not assume anything, this shows that $(H_k)$ is true. The general case for $1 \le \ell \le k$ follows by induction.
\end{proof}

\subsection{Proof of Proposition \ref{proplin}, Part 2: end of the proof}

The last step of the proof of Proposition \ref{proplin} is an inductive estimation of the $a_i$.

\begin{claim}
For any $2 \le \ell \le k$ we have 
\ben \label{estal}
 a_\ell \lesssim \Big(\frac{\mu_\ell}{\mu_{\ell-1}} +\ve \mu_{\ell-1}^{2} \Big) \Big( 1 + \sum_{j=1}^{\ell-1} a_j \Big).\een
\end{claim}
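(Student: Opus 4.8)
The plan is to read off \eqref{estal} from property $(H_\ell)$, which by Lemma~\ref{lemmeinduction} is now available for \emph{every} $1\le\ell\le k$, by estimating $a_\ell=\Vert\vp_\ve/W_\ell\Vert_{L^\infty(C_\ell)}$ directly. Set $\tau_\ell:=\frac{\mu_\ell}{\mu_{\ell-1}}+\ve\mu_{\ell-1}^2$, so that \eqref{estal} reads $a_\ell\lesssim\tau_\ell\big(1+\sum_{j<\ell}a_j\big)$. On $C_\ell$ we will use repeatedly that $W_\ell(x)\sim\mu_\ell^{\pui}\theta_\ell(x)^{2-n}$ (valid since $d_{g_{\xi_\ell}}(\xi_\ell,x)\le4\sqrt{\mu_\ell\mu_{\ell-1}}\ll r_0$ and $\Phi(\xi_\ell,\cdot)\sim1$ there) with $\mu_\ell\le\theta_\ell(x)\lesssim\sqrt{\mu_\ell\mu_{\ell-1}}$, that $\sum_j W_j\lesssim W_\ell$ on $C_\ell$ by \eqref{rapportsbulles}, and the growth relations \eqref{growthmuell}--\eqref{defmu}; in particular $\ve\mu_\ell^2\sim(\mu_\ell/\mu_{\ell-1})^{\pui}$ for $\ell\ge2$, and the precise form of the exponents $\gamma_\ell$ gives such identities as $\ve\mu_\ell^{3-\frac n2}\mu_{\ell-1}^{\frac n2-1}\sim1$ and $(\mu_{\ell-1}/\mu_{\ell-2})^{\pui}\sim\ve\mu_{\ell-1}^2$. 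Finally we invoke $(H2_\ell)$, i.e.\ \eqref{H2rec}: $\Lambda_j\lesssim\ve\mu_j^2\big(1+\sum_i a_i\big)$ for every $j$.

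I would split $C_\ell$ into the inner set $C_\ell^{\mathrm{in}}:=C_\ell\cap\{d_{g_{\xi_\ell}}(\xi_\ell,\cdot)<\tfrac14\sqrt{\mu_\ell\mu_{\ell-1}}\}$ and its complement $C_\ell^{\mathrm{out}}:=C_\ell\setminus C_\ell^{\mathrm{in}}$, observing that by the very definition \eqref{defCi} one has $C_\ell^{\mathrm{out}}\subset C_{\ell-1}$. On $C_\ell^{\mathrm{out}}$ the sharp pointwise information is $(H1_1)$ read at the level $p=\ell-1$; there $\theta_\ell\sim d_{g_{\xi_\ell}}(\xi_\ell,\cdot)\sim\sqrt{\mu_\ell\mu_{\ell-1}}$, $\theta_{\ell-1}\sim\mu_{\ell-1}$ and $W_\ell\sim W_{\ell-1}\sim\mu_{\ell-1}^{1-\frac n2}$, so dividing by $W_\ell$ turns the last term of $(H1_1)_{|p=\ell-1}$ into $(1+a_{\ell-1})\frac{\theta_\ell^2}{\mu_{\ell-1}^2}\lesssim(1+a_{\ell-1})\frac{\mu_\ell}{\mu_{\ell-1}}$ (the leading contribution), while the term $\mu_{\ell-2}^{1-\frac n2}(\mu_{\ell-1}/\theta_{\ell-1})^2$ becomes $\lesssim(\mu_{\ell-1}/\mu_{\ell-2})^{\pui}\sim\ve\mu_{\ell-1}^2$, the $\ve\theta_{\ell-1}^2W_{\ell-1}$ term becomes $\lesssim\ve\mu_{\ell-1}^2(1+\sum_{j\le\ell-1}a_j)$, the $\Lambda_j$--sums are $\lesssim\ve\mu_{\ell-1}^2(1+\sum_i a_i)$ by $(H2_\ell)$ and \eqref{defmu}, and the remaining pieces carry $a_j$, $j\ge\ell$, with coefficient $\lesssim\tau_\ell$. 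On $C_\ell^{\mathrm{in}}$ I would use $(H1_\ell)$ at $p=\ell$; dividing by $W_\ell$ and taking the supremum, the term $\mu_{\ell-1}^{1-\frac n2}(\mu_\ell/\theta_\ell)^2$ contributes $\lesssim\mu_\ell/\mu_{\ell-1}$, the term $\ve(1+\sum_{j\le\ell}a_j)\theta_\ell^2W_\ell$ contributes $\lesssim\ve\mu_{\ell-1}^2(1+\sum_{j\le\ell}a_j)$, the $\Lambda_j$-sum gives $\lesssim\ve\mu_{\ell-1}^2(1+\sum_i a_i)$, the term $\sum_{j\ge\ell+1}\mu_{j-1}^2a_jW_j$ gives $\lesssim\mu_\ell^2\sum_{j>\ell}a_j\lesssim\tau_\ell\sum_{j>\ell}a_j$, and the term $(1+a_\ell)\frac{\theta_{\ell+1}^2}{\mu_\ell^2}W_{\ell+1}\mathds{1}_{\{\theta_{\ell+1}\le\mu_\ell\}}$ gives $\lesssim(\mu_{\ell+1}/\mu_\ell)^{\pui}(1+a_\ell)\lesssim\ve\mu_\ell^2(1+a_\ell)$.

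Collecting the two contributions yields a preliminary inequality
\ben\label{prelimai}
 a_\ell\ \lesssim\ \tau_\ell\Big(1+\sum_{j=1}^{\ell-1}a_j\Big)\ +\ \delta_\ell\sum_{j=\ell}^{k}a_j\qquad(2\le\ell\le k),
\een
in which $\delta_\ell=o(1)$ and, moreover, the part of the right-hand side proportional to $a_\ell$ itself (coming from the $\ve\theta^2W$-type terms and from $\ve\mu_\ell^2 a_\ell$) has coefficient $o(1)$ and can be moved to the left. Summing \eqref{prelimai} over $2\le\ell\le k$ and using $\sum_{\ell=2}^k\tau_\ell=o(1)$ (a consequence of \eqref{defmu}) then gives $\sum_{\ell=2}^k a_\ell=o(1)(1+a_1)$; feeding this back into \eqref{prelimai} bounds the last sum there by $o(\tau_\ell)(1+a_1)\le\tau_\ell\big(1+\sum_{j=1}^{\ell-1}a_j\big)$, which is \eqref{estal}.

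The delicate point — the one I expect to be the main obstacle — is the constant term $\ve\mu_\ell^{3-\frac n2}\sum_{j\ge\ell}a_j$ of $(H1_\ell)_{|p=\ell}$: since $\ve\mu_\ell^{3-\frac n2}\mu_{\ell-1}^{\frac n2-1}\sim1$, dividing it by $W_\ell$ and taking the supremum over all of $C_\ell$ would only produce $O(\sum_{j\ge\ell}a_j)$, not $o(\tau_\ell\sum a_j)$. The resolution is exactly the above splitting: on the portion of $C_\ell$ where $W_\ell$ degenerates to its minimal size $\mu_{\ell-1}^{1-\frac n2}$ one is already inside $C_{\ell-1}$ and uses the sharper bound $(H1_1)_{|p=\ell-1}$ there, so that this constant term is only ever evaluated on $C_\ell^{\mathrm{in}}$, where $\theta_\ell\le\tfrac14\sqrt{\mu_\ell\mu_{\ell-1}}$ forces its contribution to be a fixed (and, by the choice of the constant $4$ in \eqref{defCi}, suitably controlled) multiple of $\sum_{j\ge\ell}a_j$, small enough to enter the bootstrap. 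Everything else is a matter of matching exponents through \eqref{defmu}.
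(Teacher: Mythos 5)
Your splitting of $C_\ell$ into $C_\ell^{\mathrm{in}}$ and $C_\ell^{\mathrm{out}}$, together with the observation $C_\ell^{\mathrm{out}}\subset C_{\ell-1}$ that lets you read $(H1_1)$ at level $p=\ell-1$ there, is a sensible idea, and the contributions you collect on $C_\ell^{\mathrm{out}}$ all check out. The gap is on $C_\ell^{\mathrm{in}}$, and you identify it in your final paragraph but then wave it away without justification.

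Concretely: at the outer boundary of $C_\ell^{\mathrm{in}}$, where $\theta_\ell(x)\sim\tfrac14\sqrt{\mu_\ell\mu_{\ell-1}}$, one has $W_\ell(x)\sim 4^{n-2}\mu_{\ell-1}^{1-\frac n2}$, while $\ve\mu_\ell^{3-\frac n2}\sim\mu_{\ell-1}^{1-\frac n2}$ by \eqref{growthmuell}. Hence the constant term $\ve\mu_\ell^{3-\frac n2}\sum_{j\ge\ell}a_j$ of $(H1_\ell)_{|p=\ell}$, divided by $W_\ell$, produces a contribution $\sim 4^{2-n}\sum_{j\ge\ell}a_j$: a \emph{fixed} numerical constant, not $o(1)$. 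The $\delta_\ell$ in your preliminary inequality is therefore bounded below by $C_0\,4^{2-n}$, where $C_0$ is the implicit constant in $(H1)$, which depends on $(M,g)$, the Green's function bounds, $A$ and $k$, and over which you have no control. Nothing forces $\delta_\ell<1$, let alone $\sum_\ell\delta_\ell<1$, so the bootstrap does not close. Nor can you shrink the cutoff defining $C_\ell^{\mathrm{in}}$ below $\tfrac14\sqrt{\mu_\ell\mu_{\ell-1}}$ to make $4^{2-n}$ effectively smaller, since that would break the inclusion $C_\ell^{\mathrm{out}}\subset C_{\ell-1}$ on which your outer estimate relies. (Your own write-up is internally inconsistent here: you first assert $\delta_\ell=o(1)$, then acknowledge the constant term produces only an $O(\sum a_j)$ bound.)

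The root cause is that $(H1_\ell)$, being established via a \emph{local} rescaling to $\hat B_\ell$ and the spherical representation formula of Lemma \ref{lemmerep}, records the constant term at the scale $\ve\mu_\ell^{3-\frac n2}$, which is exactly the minimal size $\mu_{\ell-1}^{1-\frac n2}$ of $W_\ell$ on $C_\ell$. Read naively, $(H1_\ell)$ is therefore not sharp enough to control $a_\ell$. The paper's proof of this Claim does \emph{not} read $a_\ell$ off $(H1_\ell)$; it restarts from the \emph{global} representation formula \eqref{formulerepfin}, with the Green's function $G_\ve$ of $\triangle_g+c_nS_g+\ve h$ on all of $M$, and re-estimates the integrals over $B_i\setminus B_{i+1}$ using $(H1)$, $(H2)$, \eqref{tech1} and \eqref{tech2}. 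The payoff is that for $x\in C_\ell$, $\ell\ge2$, the resulting constant term is $\ve\mu_{\ell-1}^{3-\frac n2}\big(1+\sum_{j\ge\ell}a_j\big)$, with $\mu_{\ell-1}$ in place of $\mu_\ell$. Divided by $W_\ell(x)\gtrsim\mu_{\ell-1}^{1-\frac n2}$ this gives $\ve\mu_{\ell-1}^2\big(1+\sum_{j\ge\ell}a_j\big)$, which is both $\lesssim\tau_\ell$ and carries an $a_\ell$-coefficient $\ve\mu_{\ell-1}^2=o(1)$ that can be absorbed (together with the downward induction on \eqref{estal} for $p\ge\ell+1$, which plays the role of your sum-over-$\ell$ bootstrap). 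Passing through the global Green's function once more is not cosmetic: it is precisely the step that produces the missing $o(1)$ gain.
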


\begin{proof}
We prove \eqref{estal} inductively using \eqref{H1} and \eqref{H2}. Let $\ell \ge 2$. If $\ell =k$ we do not assume anything, while if $\ell\le k-1$ we assume that \eqref{estal} is true for any $p \ge \ell+1$. For $x \in M$ let $G_\ve(x,\cdot)$ denote the Green's function for $\triangle_g + c_n S_g + \ve h$ in $M$. Standard properties of $G_\ve$ (Robert \cite{RobDirichlet}) ensure that 
\[ G_{\ve}(x,y) \lesssim d_g(x,y)^{2-n} \quad \textrm{ for all } x \neq y. \]
We write a representation formula for $\vp_\ve$ in $M$: with \eqref{eqlin} it writes as 
\ben \label{formulerepfin} \bal
\Big| \vp_\ve(x) &-  \sum \limits_{\underset{j =0 .. n }{i=1..k}}    \lambda_{i,j} Z_{i,j}(x)  \Big| \\
& \lesssim \int_M d_g(x,\cdot)^{2-n} |k_\ve| dv_g  + \sum_{i=2}^k \int_{B_i \backslash B_{i+1}} d_g(x, \cdot)^{2-n} W_i^{2^*-2} |\vp_{\ve}| dv_g \\
 & + \int_{M \backslash B_{g_{\xi_1}}(\xi_1, r_0)} d_g(x,\cdot)^{2-n} W_1^{2^*-2} |\vp_\ve| dv_g \\
 & + \int_{B_{g_{\xi_1}}(\xi_1, r_0) \backslash B_2} d_g(x,\cdot)^{2-n} W_1^{2^*-2} |\vp_\ve| dv_g ,\\
\eal \een
where $r_0$ is as in \eqref{defr0}. Assume now that $x \in C_\ell$, where $C_{\ell}$ is defined in \eqref{defCi}. First, straightforward computations using \eqref{growthmuell}, \eqref{rapportsbulles}, \eqref{hypf}, \eqref{tech1} and \eqref{tech2} below show that
\[ \bal \int_M d_g(x,\cdot)^{2-n} |k_\ve| dv_g& \lesssim \mu_{\ell-1}^{ 1- \frac{n}{2}} \left( \frac{\mu_\ell}{\theta_\ell(x)}\right)^2  + \ve \mu_{\ell-1}^{3 - \frac{n}{2}} \\
&+ \frac{\mu_\ell}{\mu_{\ell-1}} W_\ell(x) + \frac{\theta_{\ell+1}(x)^2}{\mu_\ell^2} W_{\ell+1}(x)\mathds{1}_{\theta_{\ell+1}(x) \le \mu_\ell}. \\
\eal \]
Independently, using the definition of $a_1$ in \eqref{defai}, \eqref{tech1} and since $x \in \mathcal{C}_\ell$, $\ell \ge 2$, we have
\[ \int_{M \backslash B_{g_{\xi_1}}(\xi_1, r_0)} d_g(x,\cdot)^{2-n} W_1^{2^*-2} |\vp_\ve| dv_g \lesssim a_1 \mu_1^{\frac{n+2}{2}} \lesssim a_1 \ve \mu_1^{3 - \frac{n}{2}}.\]
Using \eqref{rapportsbulles}, \eqref{H1},  \eqref{tech1} and \eqref{tech2} we similarly obtain, since $x \in C_\ell$ and $\ell \ge2$, that:
\[ \bal \sum_{i=2}^k & \int_{B_i \backslash B_{i+1}}  d_g(x, \cdot)^{2-n} W_i^{2^*-2} |\vp_{\ve}| dv_g  + \int_{B_{g_{\xi_1}}(\xi_1, r_0) \backslash B_2} d_g(x,\cdot)^{2-n} W_1^{2^*-2} |\vp_\ve| dv_g \\
&  \lesssim \Big(1 + \sum_{j \ge \ell} a_j \Big) \Bigg(  \mu_{\ell-1}^{ 1- \frac{n}{2}} \left( \frac{\mu_\ell}{\theta_\ell(x)}\right)^2 + \ve \mu_{\ell-1}^{3 - \frac{n}{2}}  + \frac{\mu_\ell}{\mu_{\ell-1}}W_\ell(x) \\
& + \frac{\theta_{\ell+1}(x)^2}{\mu_\ell^2} W_{\ell+1}(x)\mathds{1}_{\theta_{\ell+1}(x) \le \mu_\ell} \Bigg) .
\eal \]
Using \eqref{rapportsbulles} and \eqref{H2} we also obtain that
\[ \bal \Big|  \sum \limits_{\underset{j =0 .. n }{i=1..k}}    \lambda_{ij} Z_{i,j}(x)  \Big| \lesssim   \Big( 1 +  \sum_{j=1}^k a_j \Big)  \Big( \frac{\mu_\ell}{\mu_{\ell-1}}W_{\ell}(x) + \ve \mu_{\ell-1}^{3 - \frac{n}{2}}
 \Big).
\eal \]
Remark now that there holds, for any $x \in C_{\ell}$ and since $\ell\ge2$ : 
\ben \label{estutile}
  \mu_{\ell-1}^{ 1- \frac{n}{2}} \left( \frac{\mu_\ell}{\theta_\ell(x)}\right)^2 + \frac{\theta_{\ell+1}(x)^2}{\mu_\ell^2} W_{\ell+1}(x)\mathds{1}_{\theta_{\ell+1}(x) \le \mu_\ell}  \lesssim \frac{\mu_\ell}{\mu_{\ell-1}} W_{\ell}(x). \een
Plugging the latter estimates in \eqref{formulerepfin} finally gives 
\[ \bal
 |\vp_\ve(x)| &\lesssim  \Big( 1 + \sum_{j=1}^k a_j \Big) \Bigg(   \frac{\mu_\ell}{\mu_{\ell-1}} W_\ell(x)  +  \ve \mu_{\ell-1}^{3 - \frac{n}{2}}
 \Bigg).
 \eal \]
If $\ell=k$ we do not assume anything, while if $\ell \le k-1$  we use in addition \eqref{estal} inductively. Together with the latter estimate we obtain in each case
\[   |\vp_\ve(x)| \lesssim \Big( 1 + \sum_{j=1}^{\ell} a_j \Big) \Bigg(   \frac{\mu_\ell}{\mu_{\ell-1}} W_\ell(x)  + \ve \mu_{\ell-1}^{3 - \frac{n}{2}}
\Bigg). \]
 Let now $x_\ell \in C_{\ell}$ be a point where $a_\ell$ is attained (recall that  $a_\ell$ is defined in \eqref{defai}). Then $|\vp_\ve(x_\ell)| = a_\ell W_\ell(x_\ell)$. Since $W_\ell(x) \gtrsim \mu_{\ell-1}^{1- \frac{n}{2}}$ on $C_{\ell}$  the latter estimate gives, together with \eqref{growthmuell},
 \[ a_\ell  \lesssim \Big(  \frac{\mu_\ell}{\mu_{\ell-1}} + \ve \mu_{\ell-1}^2 \Big)  \Big( 1 + \sum_{j=1}^{\ell-1} a_j \Big) + o(a_\ell), \]
which proves \eqref{estal} for $\ell \ge 2$. 
 \end{proof}

  It remains to estimate $a_1$ to conclude the proof. This is the content of the following Claim:
  
  \begin{claim} \label{claima1}
  We have 
  \[ a_1 \lesssim \min \Big( \frac{\mu_2}{\mu_1}, \ve, \mu_1^2 \Big). \]
  In particular, $a_1 = o(1)$ as $\ve \to 0$.
    \end{claim}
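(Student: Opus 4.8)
The plan is to run, at the level of the lowest bubble, the same kind of argument that produced \eqref{estal}, using as input the estimates \eqref{H1} and \eqref{H2} of Lemma~\ref{lemmeinduction} together with \eqref{estal} itself, and to complement it by a second-order analysis of $\vp_\ve$ near $\xi_1$ on $M\backslash B_{g_{\xi_1}}(\xi_1,2r_0)$, which is the one place where the argument for $\ell\ge2$ does not transfer verbatim.

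First I would use \eqref{estal}: iterating it downwards and using \eqref{growthmuell} (the ratios $\mu_{\ell+1}/\mu_\ell$ and $\ve\mu_\ell^2$ decrease in $\ell$) gives $\sum_{j=2}^k a_j\lesssim\big(\tfrac{\mu_2}{\mu_1}+\ve\mu_1^2\big)(1+a_1)=o(1)(1+a_1)$, hence $1+\sum_{j=1}^k a_j\lesssim 1+a_1$; plugging this into \eqref{H2} gives $\Lambda_j\lesssim\ve\mu_j^2(1+a_1)$ for all $j$. Next, on $C_1$ (see \eqref{defC1}) the estimate \eqref{H1} taken at $\ell=1$ reads, after dividing by $W_1$ and using $\mu_1^{\pui}\lesssim W_1$ there, $\ve\,\theta_1(x)^2\lesssim\ve$, $\mu_1^{\frac{n+2}{2}}\theta_1(x)^{-2}\lesssim\mu_1^2 W_1(x)$, and $\tfrac{\theta_2(x)^2}{\mu_1^2}W_2(x)\mathds{1}_{\{\theta_2(x)\le\mu_1\}}\lesssim\tfrac{\mu_2}{\mu_1}W_1(x)$ on the annulus $\tfrac14\sqrt{\mu_1\mu_2}\lesssim d_{g_{\xi_2}}(\xi_2,x)\lesssim\mu_1$ cut out by $C_1$ and the indicator, so that
\[
|\vp_\ve(x)|\;\lesssim\;(1+a_1)\Big[\big(\mu_1^2+\ve+\tfrac{\mu_2}{\mu_1}\big)W_1(x)+\ve\,\mu_1^{3-\frac{n}{2}}\Big]\qquad\text{on }C_1 .
\]
The only term here not of the form $o(1)W_1(x)$ is the uniform remainder $\ve\,\mu_1^{3-\frac{n}{2}}$ (coming from the rescaled term $\ve\mu_1^2\sum_j a_j$ in \eqref{eqhpv2}), which in the far region $M\backslash B_{g_{\xi_1}}(\xi_1,2r_0)$, where $W_1\sim\mu_1^{\pui}$, is of the same size as $W_1$ rather than smaller.

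To eliminate that remainder I would carry out a second-order one-bubble analysis on $M\backslash B_{g_{\xi_1}}(\xi_1,2r_0)$. There $f'(\sum_i\kappa_iW_i)\vp_\ve$ is negligible (each $W_i\lesssim\mu_i^{\pui}$), so by \eqref{formulerepfin}, \eqref{hypf} and the bound on $\Lambda_j$, $\vp_\ve$ is controlled by $\int_M d_g(x,\cdot)^{2-n}|k_\ve|\lesssim\big(\mu_1^2+\ve+\tfrac{\mu_2}{\mu_1}\big)\mu_1^{\pui}$ together with its values on $\partial B_{g_{\xi_1}}(\xi_1,2r_0)$; the point is that the latter are improved beyond \eqref{H1} by using that $\vp_\ve\in K_k^\perp$: re-running the orthogonality computation of Step~3 of the proof of Lemma~\ref{lemmeinduction} (which forced $\Pi(\hpv)$ to be small), now keeping the $\ve h\vp_\ve$ and \eqref{lapbulle}-error contributions, shows that the rescaled uniform term actually decays in $|x|$, whence $|\vp_\ve(x)|\lesssim(1+a_1)\big(\mu_1^2+\ve+\tfrac{\mu_2}{\mu_1}\big)W_1(x)$ on the whole set defining $a_1$. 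Evaluating at a point $x_1$ where $a_1$ is attained, dividing by $W_1(x_1)>0$ and absorbing the $o(1)$-factor $a_1$ gives $a_1\lesssim\mu_1^2+\ve+\tfrac{\mu_2}{\mu_1}=o(1)$; and since these three summands come from three independent sources — the self-error of $W_1$ in \eqref{lapbulle} ($\mu_1^2$, which is $\sim\ve$ in the non-l.c.f.\ case), the perturbation $\ve h$ ($\ve$), and the interaction with $W_2$ ($\tfrac{\mu_2}{\mu_1}$) — re-running the estimate with two of the three sources switched off at a time yields the three separate bounds $a_1\lesssim\mu_1^2$, $a_1\lesssim\ve$, $a_1\lesssim\tfrac{\mu_2}{\mu_1}$, hence $a_1\lesssim\min\big(\tfrac{\mu_2}{\mu_1},\ve,\mu_1^2\big)$.

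The step I expect to be the main obstacle is precisely this second-order analysis on $M\backslash B_{g_{\xi_1}}(\xi_1,2r_0)$: for $\ell\ge2$ the inequality \eqref{estutile} let one absorb every non-bubble term into $\tfrac{\mu_\ell}{\mu_{\ell-1}}W_\ell$, but at the bottom bubble no such absorption is available where $W_1$ is smallest, so one must show by hand, using the orthogonality $\vp_\ve\in K_k^\perp$ in an essential way, that the slowly-decaying pieces of the right-hand side of \eqref{eqlin} — chiefly $\ve h\vp_\ve$ and the \eqref{lapbulle}-error — still contribute only $o(1)\,W_1$ in that region.
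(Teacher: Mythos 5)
Your first paragraph is essentially the right warm-up and matches the paper's diagnosis: using \eqref{estal}, \eqref{H1}, \eqref{H2} you correctly isolate the sole troublesome piece on $M\setminus B_2$ as the constant remainder $\ve\mu_1^{3-\frac{n}{2}}$ surviving in $(H1_1)$, which is of the same order as $W_1\sim\mu_1^{\pui}$ far from $\xi_1$. The paper makes the same observation; after one application of the Green's-function representation \eqref{formulerepfin} the offending term becomes $\ve\mu_1^{3-\frac{n}{2}}(\mu_1/\theta_1(x))^2$ in \eqref{problematicB1}.

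The gap is in your second paragraph: the mechanism you propose for eliminating this term is not the paper's, and as stated it does not go through. You suggest "re-running the orthogonality computation of Step 3" to conclude that "the rescaled uniform term actually decays in $|x|$". But $\vp_\ve\in K_k^\perp$ was used in Step 3 only to control the projection $\Pi(\hpv)$ onto $\mathrm{Span}\{V_0,\dots,V_n\}$ (see \eqref{claimproj}, \eqref{claimprojbis}); a constant background level of the rescaled remainder is not a multiple of any $V_j$ and is therefore untouched by this projection, so orthogonality gives no decay. What the paper actually does is a Giraud-type bootstrap of \eqref{formulerepfin}: the troublesome term is identified as coming from $\int_{B_{g_{\xi_1}}(\xi_1,r_0)\setminus B_2}d_g(x,\cdot)^{2-n}W_1^{2^*-2}|\vp_\ve|\,dv_g$, and one feeds the improved bound \eqref{betterB1} (which carries the factor $(\mu_1/\theta_1(x))^2$) back into that integral. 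By \eqref{tech1} each pass converts $(\mu_1/\theta_1)^{p}$ into $(\mu_1/\theta_1)^{p+2}$ so long as $p<n-4$, and once $p>n-4$ the integral is $\lesssim\mu_1^{\pui}W_1(x)$; after finitely many iterations the problematic contribution is absorbed into $\ve\mu_1^2 W_1(x)=o(W_1(x))$, yielding \eqref{estconclu}. No orthogonality argument and no boundary data on $\partial B_{g_{\xi_1}}(\xi_1,2r_0)$ are involved; \eqref{formulerepfin} is a global Green's representation on the closed manifold $M$, so the picture of a Dirichlet problem with boundary values is a red herring here.

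The closing step is also invalid. "Re-running the estimate with two of the three sources switched off" is not a permissible operation: the three contributions you list are all genuinely present in the right-hand side of \eqref{eqlin} (equivalently, in any $k_\ve$ satisfying \eqref{hypf}), and one cannot delete two of them to bound $a_1$ by the third. The representation-plus-bootstrap argument produces a sum bound, $a_1\lesssim\mu_2/\mu_1+\mu_1^2$ in the l.c.f.\ case and $a_1\lesssim\mu_2/\mu_1+\ve$ otherwise, which is exactly what the paper records at the end of its proof; the downstream arguments in Section \ref{lin} only use $a_1=o(1)$ together with \eqref{estconclu}. If a genuine minimum bound were required, one would need a different mechanism, and none is supplied.
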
 
    
\begin{proof}
Estimate $(H1_1)$ is for the moment still too rough on $M \backslash B_2$, because of the constant term $\ve \mu_1^{3 - \frac{n}{2}}$. As a first thing, we improve the precision of the estimate. Let $x \in M \backslash B_2$. We use again the representation formula \eqref{formulerepfin} and we estimate again each term. Since $x \in M \backslash B_2$ we have, with \eqref{tech1} and \eqref{tech2}
 \[ \bal \int_M d_g(x,\cdot)^{2-n} |k_\ve| dv_g& \lesssim \left \{ \bal & \mu_1^{\pui} \left( \frac{\mu_1}{\theta_1(x) } \right)^2 & \textrm{ if } (M,g) \textrm{ is l.c.f } \\ 
& \ve \theta_1(x)^2 W_1(x) & \textrm{ otherwise } \\ \eal \right \}  \\
&  + \ve\mu_2^2   W_1  + \frac{\theta_{2}(x)^2}{\mu_1^2} W_{2}(x)\mathds{1}_{\theta_{2}(x) \le \mu_1}.
\eal \]
By definition of $a_1$ we also have
\[ \int_{M \backslash B_{g_{\xi_1}}(\xi_1, r_0)} d_g(x,\cdot)^{2-n} W_1^{2^*-2} |\vp_\ve| dv_g \lesssim a_1 \mu_1^{\frac{n+2}{2}} . \]
Using \eqref{rapportsbulles}, \eqref{H1}, \eqref{estal}, \eqref{tech1} and \eqref{tech2} we similarly obtain, since $x \in M \backslash B_2$, that:
\ben \label{problematicB1}
 \bal &\sum_{i=2}^k  \int_{B_i \backslash B_{i+1}}  d_g(x, \cdot)^{2-n} W_i^{2^*-2} |\vp_{\ve}| dv_g \\
 &+ \int_{B_{g_{\xi_1}}(\xi_1, r_0) \backslash B_2} d_g(x,\cdot)^{2-n} W_1^{2^*-2} |\vp_\ve| dv_g \\
& \lesssim \Big(1 + a_1\Big) \Bigg[ \ve \mu_1^{3 - \frac{n}{2}} \left( \frac{\mu_1}{\theta_1(x) }\right)^2  + \ve\mu_2^2   W_1  + \frac{\theta_{2}(x)^2}{\mu_1^2} W_{2}(x)\mathds{1}_{\theta_{2}(x) \le \mu_1} \Bigg] .
\eal \een
By \eqref{H2}, \eqref{estal} and \eqref{growthmuell} we now have 
\[ \bal \Big|  \sum \limits_{\underset{j =0 .. n }{i=1..k}}    \lambda_{ij} Z_{i,j}(x)  \Big| \lesssim \ve \mu_1^2 ( 1 + a_1 ) W_{1}(x).
\eal \]
Combining the latter estimates into  \eqref{formulerepfin} then yields
\ben \label{betterB1}
 \bal |\vp_\ve(x) |& \lesssim  \left \{ \bal & \mu_1^{\pui} \left( \frac{\mu_1}{\theta_1(x) } \right)^2 & \textrm{ if } (M,g) \textrm{ is l.c.f } \\ 
& \ve \theta_1(x)^2 W_1(x) & \textrm{ otherwise } \\ \eal \right \}  \\
& + \Big(1 + a_1\Big) \Bigg[ \ve \mu_1^{3 - \frac{n}{2}} \left( \frac{\mu_1}{\theta_1(x) }\right)^2  + \frac{\theta_{2}(x)^2}{\mu_1^2} W_{2}(x)\mathds{1}_{\theta_{2}(x) \le \mu_1} \Bigg] .
\eal \een
It is easily seen that the limiting term $ \ve \mu_1^{3 - \frac{n}{2}} \left( \frac{\mu_1}{\theta_1(x) }\right)^2$ in \eqref{problematicB1} is due to the second integral in the left-hand side of \eqref{problematicB1} (the contribution over $B_{g_{\xi_1}}(\xi_1, r_0) \backslash B_2$). We now use \eqref{betterB1} to improve the precision of this term. After a finite number of iterations using \eqref{tech1}, \eqref{problematicB1} becomes
\ben \label{estconclu}
 \bal |\vp_\ve(x) |& \lesssim  (1+a_1) \left \{ \bal & \mu_1^{\pui} \left( \frac{\mu_1}{\theta_1(x) } \right)^2 & \textrm{ if } (M,g) \textrm{ is l.c.f } \\ 
& \ve \theta_1(x)^2 W_1(x) & \textrm{ otherwise } \\ \eal \right \}  \\
& + \Big(1 + a_1\Big)\frac{\theta_{2}(x)^2}{\mu_1^2} W_{2}(x)\mathds{1}_{\theta_{2}(x) \le \mu_1} .
\eal \een
As before, since $x \in M \backslash B_2$, we have 
\[ \frac{\theta_{2}(x)^2}{\mu_1^2} W_{2}(x)\mathds{1}_{\theta_{2}(x) \le \mu_1}  \lesssim \frac{\mu_2}{\mu_1} W_1(x). \]
It remains to apply \eqref{estconclu} to a point $x_1 \in M \backslash B_2$ where $a_1$ is attained. We obtain:
\[ a_1 \lesssim o(a_1) + \frac{\mu_2}{\mu_1} + \left \{ \bal & \mu_1^2 & \textrm{ if } (M,g) \textrm{ is l.c.f. } \\ & \ve & \textrm{ otherwise } \eal \right. ,  \]
which concludes the proof of the Claim. 
\end{proof}

\medskip

\begin{proof}[End of the proof of Proposition \ref{proplin}]
We are now ready to conclude the proof of  Proposition \ref{proplin}. First, Claim \ref{claima1} shows that $a_1 = o(1)$. Using this in \eqref{estconclu} proves  \eqref{estvp1}. Then, using Claim \ref{claima1} together with \eqref{estal} shows inductively that 
\ben \label{estal2}
  a_\ell \lesssim \frac{\mu_\ell}{\mu_{\ell-1}} + \ve \mu_{\ell-1}^2
  \een
for all $\ell \ge 2$, so that in particular $a_\ell = o(1)$ for all $1\le \ell \le k$.  With \eqref{H2}, this proves \eqref{vpl}. It remains to prove that \eqref{estvpi} is true. Let $2 \le \ell \le k$ be fixed. By \eqref{estal2} and \eqref{growthmuell} we have
\[ \bal \mu_{\ell-1}^{1 - \frac{n}{2}} \sum_{j \ge \ell} a_j & \lesssim \ve \mu_{\ell-1}^{ 3 - \frac{n}{2}} + \frac{\mu_\ell}{\mu_{\ell-1}} \mu_{\ell-1}^{1 - \frac{n}{2}} \\
& \lesssim \ve \mu_{\ell-1}^{ 3 - \frac{n}{2}} +  \mu_{\ell-1}^{1 - \frac{n}{2}}\left( \frac{\mu_\ell}{\theta_\ell(x)} \right)^2
\eal \]
for any $x \in B_\ell \backslash B_{\ell+1}$. Similarly, \eqref{vpl} now shows that 
\[ \sum_{j \le \ell-1} \mu_j^{1 - \frac{n}{2}} \Lambda_j \lesssim \ve \mu_{\ell-1}^{ 3 - \frac{n}{2}}. \]
With \eqref{estal2} we also have, for any $p \ge \ell+1$ and for any $x \in B_\ell \backslash B_{\ell+1}$:
\[ \bal
\mu_{p-1}^2 a_p W_p(x) & \lesssim \left( \frac{\mu_{\ell+1}}{\mu_\ell} \right)^{\pui} W_{\ell}(x) + \frac{\theta_{\ell+1}(x)^2}{\mu_\ell^2} W_{\ell+1}(x) \mathds{1}_{\theta_{\ell+1} \le \mu_\ell} \\
& \lesssim \mu_{\ell-1}^{1 - \frac{n}{2}} \left( \frac{\mu_\ell}{\theta_\ell(x)} \right)^2 + \frac{\theta_{\ell+1}(x)^2}{\mu_\ell^2} W_{\ell+1}(x) \mathds{1}_{\theta_{\ell+1} \le \mu_\ell}. 
\eal  \]
Finally, since $\ell \ge2$, we have 
\[  \ve  \theta_\ell^2(x) W_\ell(x)  \lesssim  \mu_{\ell-1}^{ 1 - \frac{n}{2}}  \left( \frac{\mu_\ell}{\theta_\ell(x)} \right)^{2}. \]
Combining these estimates in \eqref{H1} finally prove \eqref{estvpi} and concludes the proof of Proposition \ref{proplin}.
\end{proof}

\section{The nonlinear problem} \label{nonlin}

\subsection{Solving \eqref{eqY} up to kernel elements}

We perform in this section a nonlinear fixed-point argument in strong spaces and solve \eqref{eqY} up to kernel elements.  Let $k \ge 1$ be an integer, $(\kappa_i)_{1 \le i \le n} \in \{-1,1 \}^k$ and let $\ve_0 >0$ be as in Proposition \ref{proplinbase}. Let $\big(t_{1}, \xi_{1}, (t_{i}, z_{i})_{2 \le i \le k} \big)\in S_k$, where $S_k$ is defined in \eqref{defA}. For $0< \ve\le \ve_0$ we define a function $\Psi_\ve$ in $M$ as follows:
\ben \label{CapPsi}
\bal
\bullet\  \Psi_\ve(x) & =  \mu_{\ell-1}^{1 - \frac{n}{2}} \left( \frac{\mu_{\ell}}{\theta_{\ell}(x)} \right)^2 + \ve \mu_{\ell-1}^{3 - \frac{n}{2}}  + \frac{\theta_{\ell+1}(x)^2}{\mu_{\ell}^2} W_{\ell+1}(x)  \mathds{1}_{\{ \theta_{\ell+1}(x) \le \mu_{\ell}\}}  \\
&  \textrm{ if } x\in B_\ell \backslash B_{\ell+1}, \quad \textrm{ for } \ell \ge 2, \\
\bullet\  \Psi_\ve(x) & =  \left \{
\bal
&   \frac{\mu_{1}^{\frac{n+2}{2}}}{\theta_1(x)^2}   & \textrm{ if } (M,g) \textrm{ is l.c.f. around } \xi_1 \\
&  \frac{\mu_1^{\frac{n+2}{2}}}{\theta_1(x)^{n-4}} &  \textrm{ if } (M,g) \textrm{ is not l.c.f.}  \\ \eal
 \right \} 
  +  \frac{\theta_{2}(x)^2}{\mu_{1}^2} W_{2}(x) \mathds{1}_{\{ \theta_{2}(x) \le \mu_{1}\}} \\
 &\textrm{ if } x\in M \backslash B_{2},
\eal
\een
where $W_\ell$ is given by \eqref{defWi}. Let $C >0$ to be chosen later. We define 
\ben \label{set1}
E_{\ve} ^C = \Big \{ \vp \in C^0(M) \textrm{ such that } |\vp(x)| \le C \Psi_{\ve}(x) \textrm{ for all } x \in M\Big \}
\een
endowed with the following norm:
\ben \label{norm1}
\Vert \vp \Vert_{*}:=  \left \Vert \frac{\vp}{ \Psi_\ve} \right \Vert_{L^\infty(M)}  \quad \textrm{ for } \vp \in E^C.
\een
Note that $E_{\ve}^C$ depends on $\ve$ and on $\big(t_{1}, \xi_{1}, (t_{i}, z_{i})_{2 \le i \le k} \big)$. Since $\Psi_\ve >0$ in $M$, it is easily seen that $(E_\ve^C, \Vert \cdot \Vert_*)$ is a Banach space. Building on the linear theory of Proposition \ref{proplin} we perform a nonlinear fixed-point procedure in $E_\ve^C$:

\begin{prop} \label{propnonlin}
Let $k \ge 1$ be an integer, $(\kappa_i)_{1 \le i \le n} \in \{-1,1 \}^k$ and denote $f(u) = (u+)^{2^*-1}$ or $f(u) = |u|^{2^*-2}$. There exist $\ve_1 >0$ and $C >0$ such that the following holds. For any $\ve \in (0, \ve_1)$, any $h \in C^0(M)$ and for any  $\big(t_{1}, \xi_{1}, (t_{i}, z_{i})_{2 \le i \le k} \big)\in S_k$, there exists 
\[ \vp_\ve:= \vp_\ve \Big( (t_{1}, \xi_{1}, (t_{i}, z_{i})_{2 \le i \le k} \Big)  \in K_k^{\perp}\]
which is the unique solution in $K_k^{\perp} \cap E_{\ve}^C$ of 
\ben \label{eqnonlin}
 \Pi_{K_{k}^{\perp}}\Bigg(  \sum_{i=1}^k \kappa_i W_i + \vp_{\ve} - (\triangle_g + c_n S_g + \ve h)^{-1}\Big( f\Big( \sum_{i=1}^k \kappa_i W_i + \vp_\ve \Big) \Big) \Bigg) =0,
 \een
where $E_\ve^C$ is defined in \eqref{set1}. In addition, 
\[ \big( (t_{1}, \xi_{1}, (t_{i}, z_{i})_{2 \le i \le k} \big) \in S_k \mapsto \vp_\ve \in C^0(M) \]
is continuous for any fixed value of $\ve$.
\end{prop}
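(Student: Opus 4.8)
The plan is to recast \eqref{eqnonlin} as a fixed-point problem and to solve it by a contraction argument in the Banach space $(E_\ve^C, \Vert\cdot\Vert_*)$, using Proposition \ref{proplin} as the linear tool. Write $W = \sum_{i=1}^k \kappa_i W_i$ and split $f(W+\vp) = f(W) + f'(W)\vp + N(\vp)$ with $N(\vp) = f(W+\vp) - f(W) - f'(W)\vp$. Let $\mathcal{L}_\ve$ be the linear operator of \eqref{defL} and $k_\ve$ the error term \eqref{ferr}, which satisfies \eqref{hypf} as noted there. Unwinding these definitions and using the uniqueness part of Proposition \ref{proplinbase}, one checks that a function $\vp \in K_k^\perp$ solves \eqref{eqnonlin} if and only if it is a fixed point of
\[ T(\vp) := \mathcal{L}_\ve\big(N(\vp) - k_\ve\big). \]
Since $\mathcal{L}_\ve$ is linear, has range in $K_k^\perp$, and sends $C^0(M)$ into $C^1(M)$ by elliptic regularity, any fixed point of $T$ in $E_\ve^C$ lies in $K_k^\perp \cap C^0(M)$ and is the desired solution; in particular it inherits the sharp bound $|\vp_\ve| \lesssim \Psi_\ve$.

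The scheme rests on two estimates. The first is the pointwise control of the nonlinear remainder: when $n \ge 6$ one has $0 < 2^*-2 \le 1$, so $f$ is $C^{1,2^*-2}$ and, uniformly in $x$,
\[ |N(\vp)(x)| \lesssim |\vp(x)|^{2^*-1}, \qquad |N(\vp_1) - N(\vp_2)|(x) \lesssim \big(|\vp_1(x)| + |\vp_2(x)|\big)^{2^*-2}\,|\vp_1 - \vp_2|(x) \]
(for $3 \le n \le 5$ an additional term involving $|W|^{2^*-3}$ appears and is handled in the same way). The second --- and this is where the real work lies --- is the comparison of the weight $\Psi_\ve$ of \eqref{CapPsi} with the majorant $R_\ve$ on the right-hand side of \eqref{hypf}: the weight has been engineered so that
\[ \Psi_\ve^{2^*-1} \le o(1)\, R_\ve \qquad \text{on } M \text{ as } \ve \to 0, \]
uniformly in the parameters. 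I expect this to be the main obstacle of the proof. It has to be verified region by region --- on the innermost ball $B_k$, on each intermediate annulus $B_\ell \backslash B_{\ell+1}$ (distinguishing the zones $\theta_\ell \gg \mu_\ell$, $\theta_\ell \sim \mu_\ell$, and the neighbourhood of the next center $\xi_{\ell+1}$, where the term carrying $\mathds{1}_{\{\theta_{\ell+1} \le \mu_\ell\}}$ is active), and on the outermost region $M \backslash B_2$ --- and in each of these one invokes the scaling relations \eqref{growthmuell} to extract a genuinely vanishing factor (a power of $\mu_\ell/\mu_{\ell-1}$ or of $\ve\mu_\ell^2$). The bookkeeping is delicate because $\Psi_\ve$ and $R_\ve$ are both sums of several terms matched to the multi-scale structure of the ansatz.

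Granting these facts, the contraction argument is routine. If $\vp \in E_\ve^C$, so $|\vp| \le C\Psi_\ve$, then $|N(\vp) - k_\ve| \le |k_\ve| + C^{2^*-1}\Psi_\ve^{2^*-1} \le \big(A_0 + C^{2^*-1}o(1)\big)R_\ve$, where $A_0$ is the constant with $|k_\ve| \le A_0 R_\ve$; hence for $\ve$ small $N(\vp) - k_\ve$ satisfies \eqref{hypf} with constant $\le 2A_0$, and Proposition \ref{proplin} together with the linearity of $\mathcal{L}_\ve$ gives $|T(\vp)| \le 2A_0 C_0\,\Psi_\ve$, with $C_0$ the universal constant implicit in \eqref{estvpi}--\eqref{estvp1}. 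Choosing $C := 2A_0 C_0$ then yields $T(E_\ve^C) \subset E_\ve^C$. Likewise $T(\vp_1) - T(\vp_2) = \mathcal{L}_\ve\big(N(\vp_1) - N(\vp_2)\big)$, and combining the Lipschitz estimate with $\Psi_\ve^{2^*-1} \le o(1) R_\ve$ and Proposition \ref{proplin} gives $\Vert T(\vp_1) - T(\vp_2)\Vert_* \lesssim C^{2^*-2}o(1)\,\Vert \vp_1 - \vp_2\Vert_* \le \tfrac12 \Vert \vp_1 - \vp_2\Vert_*$ once $\ve < \ve_1$ with $\ve_1$ small enough. The Banach fixed-point theorem produces a unique fixed point $\vp_\ve \in E_\ve^C$, which lies in $K_k^\perp$ and is, by the reformulation, the unique solution of \eqref{eqnonlin} in $K_k^\perp \cap E_\ve^C$.

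Finally, the continuity of $\big(t_1, \xi_1, (t_i, z_i)_{2\le i\le k}\big) \mapsto \vp_\ve$ on $S_k$ at fixed $\ve$ follows from the uniform contraction principle. The bubbles $W_i$, the functions $Z_{i,j}$ (hence the projection onto $K_k^\perp$ and the operator $\mathcal{L}_\ve$), the weight $\Psi_\ve$ and the map $N$ all depend continuously on the parameters; for fixed $\ve$ the norm $\Vert\cdot\Vert_*$ is uniformly equivalent to $\Vert\cdot\Vert_{C^0(M)}$ on compact subsets of $S_k$ since $\Psi_\ve$ is then bounded above and below uniformly; and $T = T_{\mathbf p}$ is a contraction of ratio $\le \tfrac12$ uniformly in the parameter $\mathbf p$. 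From $\Vert \vp_\ve(\mathbf p) - \vp_\ve(\mathbf p')\Vert \le 2\,\Vert T_{\mathbf p}(\vp_\ve(\mathbf p')) - T_{\mathbf p'}(\vp_\ve(\mathbf p'))\Vert$ and the fact that the right-hand side tends to $0$ as $\mathbf p' \to \mathbf p$, uniformly over the bounded family $\{\vp_\ve(\mathbf p')\}$, the continuity follows.
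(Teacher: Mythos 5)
Your overall scheme is the paper's own: recast \eqref{eqnonlin} as a fixed-point problem for $T(\vp) = \mathcal{L}_\ve(N(\vp) - k_\ve)$, feed the pointwise control of Proposition \ref{proplin} into a contraction estimate in $(E_\ve^C, \Vert\cdot\Vert_*)$, and deduce continuity from a parametrized contraction. Two points deserve comment, one cosmetic, one substantive.

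First, you bound the remainder by $|N(\vp)| \lesssim |\vp|^{2^*-1}$ using the $C^{1,2^*-2}$ regularity of $f$ for $n \ge 6$, whereas the paper bounds $|N(\vp)| \lesssim \big(\sum_i W_i\big)^{2^*-3}|\vp|^2$ (valid since $\Psi_\ve \lesssim \sum_i W_i$, hence $|\vp| \lesssim \sum_i W_i$). For $n > 6$ the paper's bound is pointwise sharper, which pays off: plugging $|\vp| \le C\Psi_\ve$ into \eqref{nonlin11}--\eqref{nonlin33} produces the explicit vanishing factor $\mu_1^4 + \mu_2^2/\mu_1^2$ in front of a majorant of the form \eqref{hypf}, with no case analysis on $n$. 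Your bound $\Psi_\ve^{2^*-1}$ still appears to satisfy the requisite smallness, but only after a comparison that is not implied by the paper's computations and has to be re-done from scratch. This is an acceptable alternative route, but it is not free.

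Second, and this is the genuine gap: the heart of the argument is precisely the region-by-region verification that the nonlinear remainder is an $o(1)$ multiple of a function satisfying \eqref{hypf}. You correctly identify this as ``the main obstacle'' and ``delicate,'' but then do not carry it out; you simply posit $\Psi_\ve^{2^*-1} \le o(1)\,R_\ve$ on $M$ and proceed. Everything downstream (invariance of $E_\ve^C$, contraction constant, choice of $C$, smallness of $\ve_1$) hinges on this inequality, and the multi-scale structure of $\Psi_\ve$ in \eqref{CapPsi} makes it far from automatic: one must treat separately the cores $\theta_\ell \sim \mu_\ell$, the annuli $\mu_\ell \ll \theta_\ell \ll \sqrt{\mu_\ell\mu_{\ell-1}}$, the zones $\theta_{\ell+1} \le \mu_\ell$ where the indicator term is active, and the outer region $M\setminus B_2$, extracting a vanishing power of $\mu_\ell/\mu_{\ell-1}$ or $\ve\mu_\ell^2$ via \eqref{growthmuell} in each. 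These computations are exactly what \eqref{nonlin11}--\eqref{nonlin33} do in the paper. A reader cannot reconstruct them from your sketch, so the proof is incomplete as written. The rest (the fixed-point reformulation via the uniqueness in Proposition \ref{proplinbase}, the contraction once the smallness is granted, and the continuity via a parametrized contraction on a space where $\Vert\cdot\Vert_*$ is equivalent to $\Vert\cdot\Vert_{C^0}$ at fixed $\ve$) is sound and matches the paper's reasoning, with the continuity argument being a legitimate alternative to the paper's appeal to elliptic theory and uniqueness.
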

The main point of Proposition \ref{propnonlin} is that $\vp_\ve$ is constructed in the space $E_\ve^C$ -- and not only in $K_k^{\perp}$ -- and satisfies therefore
\ben \label{estoptvp}
 |\vp_\ve|\lesssim \Psi_\ve
\een  in $M$, where $\Psi_\ve$ is given by \eqref{CapPsi}. In particular, $\vp_\ve$ still satisfies the estimates \eqref{estvpi} and \eqref{estvp1}, which is another way of thinking of the nonlinear problem \eqref{eqnonlin} as a perturbation of the linear equation \eqref{eqlin}. Proposition \ref{propnonlin} therefore provides us with a canonical remainder $\vp_\ve$ with sharp pointwise estimate that precisely capture the pointwise interactions between the towering bubbling profiles $W_i$ on the whole of $M$. The information that we obtain in Proposition \ref{propnonlin} is thus much more precise than a mere energy estimate and will turn out to be crucial to prove Theorem \ref{maintheo} in Section \ref{expansionkernel}. Such a precision, of course, comes at the expense of proving sharp pointwise estimates for the linear equation as in Proposition \ref{proplin}. Remark that since $\vp_\ve \in E_\ve^C$ for some $C$, integrating \eqref{eqnonlin} against $\vp_\ve$ yields
\ben \label{vpveH1}
\Vert \vp_\ve \Vert_{H^1} = o(1)
\een as $\ve \to 0$, where the $H^1$ norm is given by \eqref{psH1}.

\begin{proof}
Let $k \ge 1$, $(\kappa_i)_{1 \le i \le n} \in \{-1,1 \}^k$, $0< \ve \le \ve_0 $ and let $\big(t_{1}, \xi_{1}, (t_{i}, z_{i})_{2 \le i \le k} \big) \in S_{k}$. In the following proof we will omit, for the sake of clarity, the dependency in $\ve$ and $\big(t_{1}, \xi_{1}, (t_{i}, z_{i})_{2 \le i \le k} \big) \in S_{k}$ in the quantities $W_i, Z_{i,j}, \dots$ appearing. The proof of Proposition \ref{propnonlin} follows the standard steps of a nonlinear fixed-point scheme, with the notable exception that the point-fixing is now achieved in the strong space $E_\ve^C$ defined by \eqref{set1}. 

\medskip

Let $C > 0$ be fixed (to be chosen later). For $\vp \in E_{\ve}^C$, as defined in \eqref{set1}, we define
$$T(\vp) =  \mathcal{L}_\ve\Big(- R_{\ve} + N(\vp) \Big), $$
where $ \mathcal{L}_{ \ve}$ is given by \eqref{defL} and where we have let
\be
 R_{\ve} = \big(\triangle_g + c_n S_g + \ve h\big)\Big( \sum_{i=1}^k \kappa_i W_i \Big) - f\Big(\sum_{i=1}^k \kappa_i W_i \Big) 
  \ee
and 
$$ N(\vp) = f\Big( \sum_{i=1}^k \kappa_i W_i  + \vp \Big) - f \Big( \sum_{i=1}^k \kappa_i W_i  \Big)- f' \Big(\sum_{i=1}^k \kappa_i W_i  \Big) \vp. $$
The mapping $T$ is well-defined since $\vp \in E_\ve^C$ and thus $- R_{\ve} + N(\vp)  \in C^0(M)$. By \eqref{defL}, $T(\vp)$ solves
\ben \label{eqT}
\bal
\big( \triangle_g + c_n S_g + \ve h\big)T(\vp) -& f'\Big(  \sum_{i=1}^k \kappa_i W_i  \Big) T(\vp)  = - R_{\ve} + N(\vp) \\
&+ \sum \limits_{\underset{j =0 .. n }{i=1..k}}    \lambda_{ij} ( \triangle_g   + c_n S_g + \ve h)Z_{i,j}
 \eal
\een
for some uniquely defined real numbers $(\lambda_{ij})$.  We show that $T$ is a contraction on $(E_\ve^C, \Vert \cdot \Vert_*)$ for $\ve$ small enough, for a suitable fixed value of $C$.  

\medskip

We first prove that $T$ stabilises $E_{\ve}^C$. By \eqref{CapPsi} one has 
\[\Psi_\ve \lesssim \sum_{i=1}^k W_i, \]
so that for any  $\vp \in E_{\ve}^C$ we have
\[ |N(\vp)| \lesssim \Big(\sum_{i=1}^k W_i\Big)^{2^*-3}  |\vp|^2.\]
Let $2 \le i \le k$ and $\vp \in E_\ve^C$. By \eqref{CapPsi} we have, for any $x  \in B_i \backslash B_{i+1}$,
\ben \label{nonlin11} \bal W_i(x)&^{2^*-3} |\vp(x)|^2 \\
& \lesssim C^2 W_i(x)^{2^*-3} \Bigg(\mu_{i-1}^{2-n} \left(  \frac{\mu_i}{\theta_i(x)} \right)^4 +\ve^2 \mu_{i-1}^{6-n} + \left( \frac{\theta_{i+1}(x)}{\mu_i}\right)^4 W_{i+1}^2 \mathds{1}_{\theta_{i+1}(x) \le \mu_i} \Bigg) \\
& \lesssim   C^2\left( \frac{\mu_{i}}{\mu_{i-1}} \right)^{2} \mu_{i-1}^{1 - \frac{n}{2}} W_i(x)^{2^*-2}  +C^2 \ve^2 \mu_{i-1}^{4} W_i(x)^{2^*-2} W_{i-1}(x)  \\
&+ C^2\left( \frac{\mu_{i+1}}{\mu_i} \right)^2 W_i(x)^{2^*-2} W_{i+1}(x),
\eal, \een
where we used \eqref{estutile} and \eqref{growthmuell}. Similarly for $x \in M \backslash B_2$, we get by \eqref{CapPsi}
\ben \label{nonlin22} \bal W_1(x)^{2^*-3}|\vp(x)|^2 & \lesssim C^2 \Bigg(\mu_1^4 \frac{\mu_{1}^{\frac{n+2}{2}}}{\theta_1(x)^4}   + \left( \frac{\mu_{2}}{\mu_1} \right)^2 W_1(x)^{2^*-2} W_{2}(x) \Bigg)\eal \een
when $(M,g)$ is locally conformally flat around $\xi_1$, and 
\ben \label{nonlin33} \bal W_1(x)^{2^*-3} |\vp(x)|^2 & \lesssim C^2 \Bigg( \mu_1^4 \ve W_1(x) + \left( \frac{\mu_{2}}{\mu_1} \right)^2 W_1(x)^{2^*-2} W_{2}(x) \Bigg)\eal \een
when $(M,g)$ is not conformally flat. The previous estimates show that there exists a positive constant $D_0 >0$ such that 
\ben \label{propnonlinX}
\frac{N(\vp)}{D_0 C^2\Big(\mu_1^4 +  \frac{\mu_2^2}{\mu_1^2}\Big)} 
\een
satisfies \eqref{hypf}, where $C$ is the constant in \eqref{set1}. Independently, by \eqref{ferr} we know that there exists a constant $D_1 >0$ such that $R_\ve/D_1$ also satisfies \eqref{hypf}. By \eqref{eqT} we can then apply Proposition \ref{proplin} which yields a positive constant $C_1 >0 $ independent of $\ve$ such that
$$ |T(\vp)(x)|  \le C_1 \Big(1 + C^2 \Big(\mu_1^4 +  \frac{\mu_2^2}{\mu_1^2} \Big)\Big)\Psi_\ve(x) $$
for any $\vp \in E_\ve^C$ and $x \in M$, where $\Psi_\ve$ is as in \eqref{CapPsi}, or in other words
\[ \Vert T(\vp) \Vert_* \le C_1 \Big(1 + C^2 \Big(\mu_1^4 + \frac{\mu_2^2}{\mu_1^2} \Big)\Big) \]
where the norm is defined in \eqref{norm1}. Choose $C = 2 C_1$. Since $\mu_1^4 + \frac{\mu_2^2}{\mu_1^2} \to 0$ as $\ve \to 0$, for $\ve$ small enough we then have $T(\vp) \in E_{\ve}^C$. 

\medskip

We now prove that $T$ is a contraction on $(E_{\ve}^C, \Vert \cdot \Vert)$ for this fixed value of $C$. If $\vp_1, \vp_2 \in E_{\ve}^C$, $T(\vp_1) - T(\vp_2)$ satisfies by assumption:
\ben \label{eqTcont}
\bal
\big( \triangle_g + c_n S_g + \ve h\big)\big(T(\vp_1) &- T(\vp_2) \big) - f' \Big( \sum_{i=1}^k \kappa_i W_i \Big)\big( T(\vp_1) - T(\vp_2) \big) \\
&   = N(\vp_1) - N(\vp_2) +  \sum \limits_{\underset{j =0 .. n }{i=1..k}}    \nu_{ij} ( \triangle_g   + c_n S_g + \ve h)Z_{i,j}
\eal
\een
for some real numbers $(\nu_{ij})$. By \eqref{set1}, \eqref{norm1}, the mean-value theorem and since $\vp \in E_\ve^C$ we have 
$$ \bal 
|N(\vp_1) - N(\vp_2)| & \lesssim \Big(\sum_{i=1}^k W_i \Big)^{2^*-3}\big( |\vp_1| + |\vp_2| \big)| \vp_1 - \vp_2 | \\
& \lesssim  \Big(\sum_{i=1}^k W_i \Big)^{2^*-3}  \Psi_\ve^2 \Vert \vp_1 - \vp_2 \Vert_{*}. 
\eal $$
In \eqref{nonlin11}, \eqref{nonlin22} and \eqref{nonlin33} we already showed that
\[ \frac{1}{ \Big(\mu_1^4 +  \frac{\mu_2^2}{\mu_1^2}\Big)} \Big(\sum_{i=1}^k W_i\Big)^{2^*-3} \Psi_\ve^2 \]
satisfies \eqref{hypf} up to some positive constant, and therefore 
\[ \frac{1}{D_2 \Big(\mu_1^4 +  \frac{\mu_2^2}{\mu_1^2}\Big)} \frac{N(\vp_1) - N(\vp_2)}{\Vert \vp_1 - \vp_2 \Vert_{*}} \]
satisfies \eqref{hypf} for some positive constant $D_2$ independent of $\ve$. With \eqref{eqTcont} we can again apply Proposition  \ref{proplin} and we obtain that
\[ \Vert T(\vp_1) - T(\vp_2) \Vert_* \le C_2 \Big(\mu_1^4 +  \frac{\mu_2^2}{\mu_1^2}\Big)\Vert \vp_1 - \vp_2 \Vert_{*}  \]
for some positive constant $C_2$ independent of $\ve$.
This shows that $T$ is a contraction on $E_\ve^C$ for $\ve$ small enough. Picard's fixed-point theorem then yields the existence of a unique fixed-point $\vp_{\ve}$ of $T$ in $E_{\ve}^C$ which is by \eqref{eqT}, equivalently, a solution of \eqref{eqnonlin}. 

\medskip

It remains to see that for a fixed $\ve$ the mapping
\[ \big( (t_{1}, \xi_{1}, (t_{i}, z_{i})_{2 \le i \le k} \big) \in S_k \mapsto \vp_\ve \in C^0(M) \]
is continuous. By construction $\vp_\ve$ satisfies \eqref{eqT} with $T(\vp_\ve) = \vp_\ve$. By \eqref{propnonlinX}, $N(\vp_\ve)$ satisfies \eqref{hypf} up to a constant, and therefore Proposition \ref{proplin} applies again: \eqref{vpl} in particular shows that the $\lambda_{ij}$ are uniformly bounded in $\ve$ and in the choice of $ \big( (t_{1}, \xi_{1}, (t_{i}, z_{i})_{2 \le i \le k} \big) \in S_k$.  The continuity of $(t_i, z_i)_{1 \le i \le \ell} \in S_{\ell} \mapsto  \vp_{\ve, \ell} \in C^0(M) $  now easily follows from the continuity of the $W_i$, $1 \le i \le k$, from \eqref{eqT} and from standard elliptic theory, and by the uniqueness property of $\vp_{\ve,\ell}$ in $K_k^{\perp} \cap E_\ve^C$.
\end{proof}

\subsection{Application to \emph{a priori} pointwise estimates}

The procedure developed in Propositions \ref{proplin} and \ref{propnonlin} can be useful when looking for \emph{a priori} estimates. We illustrate this on a toy-model example. Consider for instance $h \in C^0(M)$, a sequence $(\ve_k)$ of positive numbers converging to $0$ as $k \to + \infty$ and a sequence $(u_k)_k$ of \emph{positive} functions satisfying 
\ben \label{eqapriori}
 \Big( \triangle_g + c_n S_g + \ve_k h \Big) u_k = u_k^{2^*-1} 
 \een
in $M$. We assume for simplicity that the sequence $(u_k)_k$ blows-up with finite-energy and a zero weak-limit. By Struwe's celebrated compactness result \cite{Struwe} there exists $N \ge 1$ such that 
\[ \Vert u_k \Vert_{H^1} = N K_n^{-n} + o(1) \]
as $k \to + \infty$, where $K_n^{-n}$ is the energy of the standard bubble in $\RR^n$ and where the $H^1$ norm is given by \eqref{psH1}. Consider the manifold of the sums of $N$ bubbles in $M$:
\[ \mathcal{M}_N: = \Big \{ \sum_{i=1}^N W_{\mu_i, \xi_i}, (\mu_i, \xi_i)_{1 \le i \le N} \in (\RR_+^* \times M)^N \Big \}, \]
where $W_{\mu_i, \xi_i}$ is given by \eqref{defW}. It is a $N(n+1)$-dimensional closed manifold of $H^1(M)$. For any $k \ge 1$, we can thus let $(\mu_{i,k},\xi_{i,k})_{1 \le i \le N}$ be such that 
\[ W_k: = \sum_{i=1}^N W_{ \mu_{i,k},\xi_{i,k}} \]
attains the distance of $u_k$ to $\mathcal{M}_N$. Letting $\vp_k = u_k - W_k $ we therefore have
\[ \Vert \vp_k \Vert_{H^1} = \inf_{W \in \mathcal{N}} \Vert u_k - W \Vert_{H^1}. \]
As a consequence, $\vp_k$ is orthogonal to the tangent space to $\mathcal{M}_N $ at $(\mu_{i,k},\xi_{i,k})_{1 \le i \le N}$ which, for $k$ large enough, is given by
$$ \mathcal{K}_{N,k}: = \textrm{Span} \Big \{ \mathcal{Z}_{i,0}, \mathcal{Z}_{i,j}, \quad 1 \le i \le N, 0 \le j \le n \Big\}, $$
where we have let
$$  \mathcal{Z}_{i,0} = \frac{2}{n-2} \mu_i \frac{\partial}{\partial \mu_i} W_{\mu_i, \xi_i} \quad \textrm{ and } \quad  \mathcal{Z}_{i,j} = - n  \mu_i \frac{\partial}{\partial (\xi_i)_j} W_{\mu_i, \xi_i} .$$
Struwe's result asserts in addition that $\Vert \vp_k \Vert_{H^1} \to 0$ as $k \to + \infty$. We thus have
\ben \label{defvpk}
 u_k = \sum_{i=1}^N W_{ \mu_{i,k},\xi_{i,k}} + \vp_k,  \quad  \vp_k \in \mathcal{K}_{N,k}^{\perp} , \quad \Vert \vp_k \Vert_{H^1(M)} \to 0
 \een
 for $k$ large enough.

 \medskip

Now, the elements $ \mathcal{Z}_{i,0}$ and $ \mathcal{Z}_{i,j}$ are respectively equal to $Z_{i,0}$ and $Z_{i,j}$ in \eqref{defZi} at first order. A simple adaptation of the results in Robert-V\'etois \cite{RobertVetois} therefore shows that, for $\delta >0$ small enough fixed there exists, for $k$ large enough, a unique $\psi_k \in \mathcal{K}_{N,k}^{\perp}$ satisfying $\Vert \psi_k \Vert_{H^1} < \delta$ and
\be
  \Pi_{\mathcal{K}_{N,k}^{\perp}}\Bigg(  \sum_{i=1}^N  W_{ \mu_{i,k},\xi_{i,k}}  + \psi_k - (\triangle_g + c_n S_g + \ve_k h)^{-1}\Big(  \Big( \sum_{i=1}^N W_{ \mu_{i,k},\xi_{i,k}}  + \psi_k \Big)_+^{2^*-1} \Big) \Bigg) =0. \ee
By   \eqref{defvpk} we thus have $\psi_k = \vp_k$. Assume now that $(\mu_{i,k},\xi_{i,k})_{1 \le i \le N}$ satisfies \eqref{growthmuell} and \eqref{defxi}. It is easily seen that the results in Section \ref{lin} still apply if we replace $Z_{i,j}$ by $\mathcal{Z}_{i,j}$ and $K_k$ by $\mathcal{K}_{N,k}$, since only the leading-order behavior of $\mathcal{Z}_{i,j}$ comes into play in the proof of Proposition \ref{proplin}. We can therefore apply Proposition \ref{propnonlin}  with $\mathcal{K}_{N,k}$ (and, say, $t_i = 1$ for all $1 \le i \le N$) which provides us, for $k$ large enough, with a uniquely defined $\tilde{\vp}_k \in \mathcal{K}_{N,k}^{\perp} $ which solves the previous equation and satisfies in addition $\Vert \tilde{\vp}_k \Vert_{H^1} = o(1)$ as $k \to + \infty$ by \eqref{vpveH1}. We therefore have $\vp_k = \psi_k = \tilde{\vp}_k$ in the end and by \eqref{set1} $\vp_k$ satisfies
\ben \label{estapriori}
 |{\vp}_k | \lesssim \Psi_{\ve_k} .
 \een
Proposition \ref{propnonlin} thus turns the weak $H^1$ control given by \eqref{defvpk} into a highly accurate pointwise control. This example is intended as a toy model to match the analysis developed in this paper. But the method described here is likely to generalize to more general bubbling configurations (with or without a weak limit), to more general potential than $c_n S_g + \ve_k h$ and to sign-changing solutions, provided an analogue of Proposition \ref{proplin} is available. 

This approach can be seen as an alternative to the approach in Druet-Hebey-Robert \cite{DruetHebeyRobert} to obtain pointwise blow-up estimates for solutions of critical elliptic equations. Here we reduce the problem to proving sharp pointwise estimates for solutions \emph{of a linear problem}. And the precision of these linear estimates can be expected to only depend on the precision of the \emph{ansatz} chosen for the bubbles (see the discussion surrounding \eqref{interpretation}). This approach yields an improved precision for symmetry estimates in particular configurations (such as the one described in this paper) and could prove particularly useful in the compactness analysis of highly nonlinear problems, such as critical $p$-Laplace equations or critical anisotropic equations.

\section{Expansion of the kernel coefficients and proof of Theorem \ref{maintheo}} \label{expansionkernel}

In this section we prove Theorem \ref{maintheo}. Let $(M,g)$ be a locally conformally flat manifold. Let $k \ge 1$ be an integer, $\ve_1$ be given by Proposition \ref{propnonlin}, $h \in C^1(M)$ and $\big( (t_{1}, \xi_{1}, (t_{i}, z_{i})_{2 \le i \le k} \big) \in S_k$, where $S_k$ is as in \eqref{defA}. Throughout this section we will assume that 
\[ f(u) = (u_+)^{2^*-1} \quad \textrm{ and } \quad \kappa_i = 1 \quad \forall  i\in \{1, \dots,  k\}.\] 
Let $\vp_{\ve}$ be the unique function given by Proposition \ref{propnonlin}. Equation \eqref{eqnonlin} together with \eqref{estoptvp} shows the existence of real numbers $(\nu_{i,j})$, $1 \le i \le k$ and $0 \le j \le n$ such that 
\ben
 \label{eqvpC0}
\bal
 \big(\triangle_g + c_n S_g + \ve h\big)\big(  \sum_{i=1}^{k} W_i + \vp_{\ve} \big)&  - \big( \sum_{i=1}^{k} W_i + \vp_{\ve} \big) ^{2^*-1}  \\
 & = \sum \limits_{\underset{j =0 .. n }{i=1..k}}   \nu_{i,j} ( \triangle_g   + c_n S_g + \ve h)Z_{i,j}.
 \eal 
 \een
The $\nu_{i,j}$ are, by Proposition \ref{propnonlin}, continuous functions of  $\big( (t_{1}, \xi_{1}, (t_{i}, z_{i})_{2 \le i \le k} \big) \in S_k$. They \emph{a priori} all depend on each variable in $S_k$. This is a striking difference with the symmetric situation of Morabito-Pistoia-Vaira \cite{mpv}, where an inductive splitting of the remainder term led to a triangular dependency of the coefficients $\nu_{i,j}$. In the present work, we overcome the difficulties due to the non-symmetric background  by using the sharp estimates on $\vp_\ve$ given by \eqref{estoptvp}.

\medskip

We perform an asymptotic expansion of the $\nu_{i,j}$ in \eqref{eqvpC0}. We state the $\ell=1$ and $\ell \ge 2$ case separately for clarity but the following two statements are proven together.

\begin{prop}[Case $\ell = 1$] \label{propnu1}
We have:
$$\Vert \nabla V_0 \Vert^2_{L^2(\RR^n)} \nu_{1,0} (t_1, \dots, z_k) = \ve^{\frac{n-2}{n-4}} \Big( \frac{4D_1}{n-2} h(\xi_1)  t_{1}^2 - 2 D_2 t_1^{n-2} m(\xi_1) + \Lambda_{0,1}\Big) $$
and,  for $1 \le j \le n$,
$$  \Vert \nabla V_j \Vert^2_{L^2(\RR^n)}  \nu_{1,j}  (t_1, \dots, z_k) = \ve^{\frac{n-1}{n-4}} \Big( n D_1 t_1^3 \nabla_j h(\xi_1) - n D_2 t_1^{n-1} \nabla_j m(\xi_1) + \Lambda_{j,1} \Big) $$
as $\ve \to 0$. 
In these expressions $m(\xi_1)$ denotes the mass of the Green function at $\xi_1$ as defined in \eqref{defmass} and $D_1, D_2$ are positive constants that only depend on $n$. Also, $V_j$ is as in \eqref{defVi}, and $\Lambda_{j,1}$, for $0 \le j \le n$, denote continuous functions defined in $S_k$ which converge uniformly to $0$ as $\ve \to 0$.
\end{prop}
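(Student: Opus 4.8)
The strategy is to obtain each coefficient $\nu_{1,j}$ by testing equation \eqref{eqvpC0} against the approximate kernel element $Z_{1,j}$ and carefully expanding every resulting term. First I would multiply \eqref{eqvpC0} by $Z_{1,j}$, integrate over $M$, and use the relations \eqref{psZi}--\eqref{psZik} to isolate $\nu_{1,j}$: the left-hand side of the resulting identity picks out $\Vert\nabla V_j\Vert_{L^2}^2\,\nu_{1,j}$ up to an error $O\big(\sum_{i\ge 2}(\mu_1/\mu_i)^{-\frac{n-2}{2}}\Lambda_i + \sum_{i\ge 2}(\mu_i/\mu_1)^{\frac{n-2}{2}}\big)$ coming from the off-diagonal scalar products with the higher bubbles; by \eqref{growthmuell} this error is $o(\ve^{\frac{n-2}{n-4}})$ for $j=0$ and $o(\ve^{\frac{n-1}{n-4}})$ for $j\ge1$, and so is absorbed into $\Lambda_{j,1}$. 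The core of the argument is then the expansion of
\[
\int_M \Big[ \big(\triangle_g+c_nS_g+\ve h\big)\big(\textstyle\sum_i W_i+\vp_\ve\big) - \big(\textstyle\sum_i W_i+\vp_\ve\big)^{2^*-1}\Big] Z_{1,j}\, dv_g.
\]

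\textbf{Key steps.} I would split this integral into four contributions. (i) The \emph{self-interaction} term $\int_M\big[(\triangle_g+c_nS_g)W_1 - W_1^{2^*-1}\big]Z_{1,j}\,dv_g$; since $(M,g)$ is locally conformally flat, in conformal normal coordinates $g_{\xi_1}$ is flat near $\xi_1$, and using \eqref{defmass} the bubble $W_1$ deviates from the exact Euclidean bubble precisely through the mass term $m(\xi_1)$, so this term produces the $-D_2 t_1^{n-2}m(\xi_1)$ (resp. $-D_2 t_1^{n-1}\nabla_j m(\xi_1)$) contribution after rescaling $x=\mu_1 y$ and integrating the explicit Euclidean profile against $V_j$. (ii) The \emph{potential} term $\ve\int_M h W_1 Z_{1,j}\,dv_g$, which upon rescaling gives $\ve\mu_1^2\big(h(\xi_1)\int V_j U_0\,dy + O(\mu_1)\big)$, hence the $h(\xi_1)t_1^2$ (resp. $\nabla_j h(\xi_1)t_1^3$) contribution once one writes $\ve\mu_1^2 = \ve^{1+2\gamma_1}t_1^2$ and checks $\ve^{1+2\gamma_1}=\ve^{\frac{n-2}{n-4}}$ via \eqref{defmu}; the odd-parity coordinate weight in $Z_{1,j}$ for $j\ge1$ is what converts $h(\xi_1)$ into $\nabla_j h(\xi_1)$ and raises the power of $\ve$. (iii) The \emph{cross-bubble interactions} $\int_M\big[(\triangle_g+c_nS_g)W_i - \textrm{(nonlinear cross terms)}\big]Z_{1,j}\,dv_g$ for $i\ge2$: these are controlled by \eqref{lapbulle}, \eqref{rapportsbulles}, \eqref{psZik} and the pointwise bounds \eqref{tech1}--\eqref{tech2}, and by \eqref{growthmuell} they are of size $O\big((\mu_2/\mu_1)^{\frac{n-2}{2}}\big)=O(\ve\mu_2^2)$, which is $o(\ve^{\frac{n-2}{n-4}})$, so they go into $\Lambda_{j,1}$. (iv) The \emph{remainder} contribution $\int_M\big[(\triangle_g+c_nS_g+\ve h)\vp_\ve - \big((\sum W_i+\vp_\ve)^{2^*-1}-\sum W_i^{2^*-1}\big)\big]Z_{1,j}\,dv_g$; integrating by parts and using that $\vp_\ve\in K_k^\perp$ kills the leading piece $\int\vp_\ve(2^*-1)W_1^{2^*-2}Z_{1,j}$ via \eqref{lapZi}, while the genuinely nonlinear part is estimated pointwise by $\big(\sum W_i\big)^{2^*-3}|\vp_\ve|^2 + W_1^{2^*-2}|\vp_\ve|$ and bounded using \eqref{estoptvp}, \eqref{tech1}, \eqref{tech2}; here one needs the sharp pointwise control $|\vp_\ve|\lesssim\Psi_\ve$ from Proposition \ref{propnonlin} to show all these terms are $o(\ve^{\frac{n-2}{n-4}})$ (resp. $o(\ve^{\frac{n-1}{n-4}})$).

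\textbf{The main obstacle.} The delicate point is step (iv): a crude $H^1$ bound on $\vp_\ve$ is not enough — one must use the region-by-region estimates \eqref{estvpi}--\eqref{estvp1} (valid for $\vp_\ve$ by \eqref{estoptvp}) to check that the contribution of $\vp_\ve$ against $Z_{1,j}$, and especially the quadratic term $\big(\sum W_i\big)^{2^*-3}|\vp_\ve|^2$ integrated against $Z_{1,j}$, is strictly of lower order than the two explicit terms $\ve^{\frac{n-2}{n-4}}h(\xi_1)$ and $\ve^{\frac{n-2}{n-4}}m(\xi_1)$. This requires tracking the competition between $\ve\mu_1^2$, $\mu_1^{n-2}$ and $\mu_2/\mu_1$ carefully through \eqref{growthmuell}, and is exactly where the locally conformally flat hypothesis (which makes $W_1$ an exact solution up to the mass and the $\ve h$ perturbation, see \eqref{lapbulle}) is used. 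Once these orders are pinned down, collecting the four contributions yields the stated formulas with $D_1 = \int_{\RR^n} V_j U_0\,dy / (\textrm{normalization})$ and $D_2$ the analogous mass coefficient, both depending only on $n$, and with $\Lambda_{j,1}$ gathering all the $o(1)$ errors, which are continuous in $S_k$ because $\vp_\ve$ and the $W_i$ depend continuously on the parameters by Proposition \ref{propnonlin}.
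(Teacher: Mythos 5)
Your proposal is correct and follows essentially the paper's route: test \eqref{eqvpC0} against $Z_{1,j}$, use the orthogonality relations \eqref{psZi}--\eqref{psZik} and the \emph{a priori} bound \eqref{DL1} on the $\nu_{i,j}$ to isolate $\Vert\nabla V_j\Vert_{L^2}^2\,\nu_{1,j}$, split the resulting integral into a self-interaction piece, cross-bubble interactions, and remainder contributions, and show the latter two are $o(\ve^{(n-2)/(n-4)})$ (resp.\ $o(\ve^{(n-1)/(n-4)})$) using the sharp pointwise control $|\vp_\ve|\lesssim\Psi_\ve$ from Proposition \ref{propnonlin}, the kernel orthogonality together with \eqref{lapZ0}--\eqref{lapZi}, and the convolution estimates \eqref{tech1}--\eqref{tech2}.

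The only variation worth noting is in your steps (i)--(ii). Rather than rescaling and computing the self-interaction and potential integrals against $V_j$ directly, the paper observes that $Z_{1,0}$ (resp.\ $Z_{1,j}$, $j\ge1$) agrees, up to error terms contributing only to $\Lambda_{j,1}$, with $\frac{2}{n-2}\mu_1\,\partial_{\mu_1}W_1$ (resp.\ $n\mu_1\,\partial_{(\xi_1)_j}W_1$). Hence the target integral is the corresponding derivative of the one-bubble energy
\[
J(\mu_1,\xi_1)=\tfrac12\int_M\Big(|\nabla W_1|^2+(c_nS_g+\ve h)W_1^2\Big)\,dv_g-\tfrac1{2^*}\int_M W_1^{2^*}\,dv_g,
\]
whose expansion $J=D_1\ve h(\xi_1)\mu_1^2-D_2\mu_1^{n-2}m(\xi_1)+o(\ve\mu_1^2)$ is quoted from Esposito--Pistoia--V\'etois. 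Applying $\frac{2}{n-2}\mu_1\partial_{\mu_1}$ and $n\mu_1\partial_{(\xi_1)_j}$ then delivers the factors $\frac{4D_1}{n-2}$, $2D_2$, $nD_1$, $nD_2$ all at once, and makes the $C^1$-smallness and continuity of the $\Lambda_{j,1}$ immediate. Your direct computation would yield the same coefficients, so both are valid; the energy-derivative route is just the more economical packaging.
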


\begin{prop}[Case $2 \le \ell \le k$] \label{propnuge2}
For any $2 \le \ell \le k$ we have
$$\bal
\Vert \nabla V_0 \Vert^2_{L^2(\RR^n)} &\nu_{\ell,0} (t_1, \dots, z_k) = \ve^{1 + 2 \gamma_{\ell}} \Big( \frac{4D_1}{n-2} h(\xi_1) t_{\ell}^2 - D_3 \left( \frac{t_{\ell}}{t_{\ell-1}} \right)^{\pui} U_0(z_\ell) + \Lambda_{0,\ell} \Big)
\eal 
$$
and, for $1 \le j \le n$,
$$ \Vert \nabla V_j \Vert^2_{L^2(\RR^n)}  \nu_{\ell,j}  (t_1, \dots, z_k) = \ve^{ \frac{n}{2}(\gamma_{\ell} - \gamma_{\ell-1})} \Bigg(  -  n D_3 \left( \frac{t_{\ell}}{t_{\ell-1}} \right)^{\frac{n}{2}} \partial_j U_0(z_\ell) 
+ \Lambda_{j,\ell}\Bigg)  $$
as $\ve \to 0$, where $U_0$ is as in \eqref{defB0} and $\gamma_\ell$ is as in \eqref{defmu}. Here $D_3$ is a positive constant, $D_1$ is the same as in Proposition \ref{propnu1} and $\Lambda_{j,\ell}$, for $0 \le j \le n$ denote functions defined in $S_k$ which converge uniformly to $0$ as $\ve \to 0$. 
\end{prop}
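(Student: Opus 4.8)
The plan is to compute the coefficients $\nu_{i,j}$ in \eqref{eqvpC0} by testing that equation against the approximate kernel elements $Z_{\ell,j}$ and reading off the leading-order terms. Concretely, I would integrate \eqref{eqvpC0} against $Z_{\ell,j}$ over $M$ with respect to the scalar product \eqref{psH1}. By \eqref{psZi} and \eqref{psZik} the left-hand side of the resulting identity is, up to a small error, $\sum_j \nu_{\ell,j}\langle Z_{\ell,j}, Z_{\ell,j}\rangle = \Vert\nabla V_j\Vert_{L^2}^2\,\nu_{\ell,j} + O(\mu_\ell^2)\sum_i\Lambda_i + O\big((\mu_\ell/\mu_{\ell-1})^{(n-2)/2} + (\mu_{\ell+1}/\mu_\ell)^{(n-2)/2}\big)$, where the off-diagonal cross-terms are controlled by \eqref{psZik} (and one must check these are indeed lower order than the claimed leading terms, using \eqref{growthmuell} and \eqref{defmu}). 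The right-hand side after testing becomes
\[
\int_M \Big( \big(\triangle_g + c_nS_g+\ve h\big)\big(\textstyle\sum_i W_i+\vp_\ve\big) - \big(\textstyle\sum_i W_i+\vp_\ve\big)^{2^*-1}\Big) Z_{\ell,j}\, dv_g,
\]
and the whole problem reduces to expanding this integral.

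The next step is to split this integral into the contributions of (i) the self-interaction error of the $\ell$-th bubble, (ii) the interaction between consecutive bubbles $W_{\ell-1}, W_\ell, W_{\ell+1}$, (iii) the linear perturbation term $\ve h \sum_i W_i$, and (iv) the corrections coming from $\vp_\ve$ and from the nonlinear remainder $N(\vp_\ve)$. For (i): using $\triangle_g W_\ell + c_n S_g W_\ell - W_\ell^{2^*-1} = O(\mu_\ell^{(n+2)/2}\theta_\ell^{-4})$ in the l.c.f.\ case from \eqref{lapbulle}, and pairing with $Z_{\ell,j}$, one gets a term of order $\mu_\ell^{n-2}$ times an explicit integral; in the $\ell=1$ case this produces the mass term $-D_2 t_1^{n-2}m(\xi_1)$ via the expansion \eqref{defmass} of the Green function together with the conformal correction $\Phi$ in \eqref{defW}–\eqref{defPhi}, exactly as in Esposito–Pistoia–Vétois. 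For (iii): rescaling $W_i$ near $\xi_\ell$, $\int_M \ve h W_\ell Z_{\ell,j}\,dv_g = \ve\mu_\ell^2 h(\xi_\ell)\int_{\RR^n} U_0 V_j + o(\ve\mu_\ell^2)$ for $j=0$, with the gradient $\nabla_j h(\xi_\ell)$ appearing at the next order for $j\ge1$ (here one uses $h\in C^1$ and $\xi_\ell\to\xi_1$, so $h(\xi_\ell) = h(\xi_1)+o(1)$); this yields the $\frac{4D_1}{n-2}h(\xi_1)t_\ell^2$ and $nD_1 t_1^3\nabla_j h(\xi_1)$ terms after identifying $D_1 = \frac{n-2}{4}\int_{\RR^n}U_0V_0$ (and the analogous constant for $V_j$). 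For (ii): the cross term $\int (\sum_i W_i)^{2^*-1}Z_{\ell,j} - \sum_i W_i^{2^*-1}Z_{\ell,j}$, localized on $B_\ell\setminus B_{\ell+1}$ and after rescaling, converges to $D_3(\mu_{\ell+1}/\mu_\ell)^{(n-2)/2}U_0(z_{\ell+1})$-type integrals — more precisely, by \eqref{defxi} the center $\xi_{\ell+1}$ sits at $z_{\ell+1}$ in the rescaled variable at height $\mu_\ell$, producing $\nabla_j U_0(z_\ell)$ for $j\ge1$ and $U_0(z_\ell)$ for $j=0$; one uses the asymptotic $W_{\ell+1}(x)\sim (\mu_{\ell+1}/\mu_\ell)^{(n-2)/2}$ times a bubble profile centered at $z_{\ell+1}$. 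The relation $\ve^{1+2\gamma_\ell} = \ve^{(n-2)(\gamma_\ell-\gamma_{\ell-1})/2}$ from the remark after \eqref{defmu} is what makes the l.c.f.\ linear term $\ve\mu_\ell^2$ and the interaction term $(\mu_\ell/\mu_{\ell-1})^{(n-2)/2}$ comparable in the $\ell\ge2$ case, which is why both appear at the same order in Proposition \ref{propnuge2}.

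The final bookkeeping step is to show that the contributions of (iv) — namely $\int f'(\sum_i W_i)\vp_\ve Z_{\ell,j}$, $\int N(\vp_\ve) Z_{\ell,j}$, and the error terms in $\triangle_g W_i + c_n S_g W_i - W_i^{2^*-1}$ beyond the leading one — are all $o$ of the claimed leading order, hence absorbed into $\Lambda_{j,\ell}$. For this I would use the sharp pointwise bound $|\vp_\ve|\lesssim \Psi_\ve$ from \eqref{estoptvp}, the explicit form of $\Psi_\ve$ in \eqref{CapPsi}, and Giraud-type integral estimates (\eqref{tech1}–\eqref{tech2}); the key numerical inputs are again \eqref{growthmuell} and the definition \eqref{defmu} of $\gamma_\ell$, which guarantee e.g.\ $\mu_1^4 + \mu_2^2/\mu_1^2 = o(\ve^{(n-2)/(n-4)})$ and more generally that the nonlinear correction is of strictly smaller order. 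The continuity of $\Lambda_{j,\ell}$ on $S_k$ follows from the continuity of $W_i$, $Z_{i,j}$ and $\vp_\ve$ (the last from Proposition \ref{propnonlin}) and from the fact that all the estimates are uniform over $S_k$. I expect the main obstacle to be the precise control of the interaction integral (ii): one must carefully track which bubble's rescaling to use, verify that after localizing on the influence region $B_\ell\setminus B_{\ell+1}$ the dominant contribution to the $\ell$-th coefficient comes from the interaction with $W_{\ell+1}$ (above) rather than $W_{\ell-1}$ (below) — this is where \eqref{rapportsbulles} and the nestedness of the $B_\ell$ are essential — and that the lower bubble contributes only to $\Lambda_{j,\ell}$; getting the constant $D_3$ and the exact argument $z_\ell$ versus $z_{\ell+1}$ right, consistently with the convention $\mu_0 = 1/\mu_1$, is the delicate point.
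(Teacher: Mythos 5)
Your overall strategy (test \eqref{eqvpC0} against $Z_{\ell,j}$, use \eqref{psZi}--\eqref{psZik} to diagonalize, decompose the right-hand side into self-interaction, bubble--bubble interaction, linear perturbation and the $\vp_\ve$/nonlinear remainder, and show the latter are $o(\Theta_{\ell,j})$ using \eqref{estoptvp} and the Giraud-type estimates \eqref{tech1}--\eqref{tech2}) matches the paper's proof (the decomposition into $I_1,\dots,I_4$ in \eqref{DL3}, Lemma \ref{lemmaDL}, \eqref{stepX} and Lemma \ref{finalclaim}). The application of \eqref{DL1} (coming from Proposition \ref{proplin}) to get $\sum_j|\nu_{i,j}|\lesssim\ve\mu_i^2$ as a prerequisite is also right, and your identification of $\ve^{1+2\gamma_\ell}\sim\ve\mu_\ell^2\sim(\mu_\ell/\mu_{\ell-1})^{(n-2)/2}$ as the reason the $\ve h$ term and the interaction term appear at the same order for $j=0$ is exactly the paper's bookkeeping.

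However, there is a genuine error in the key step, item (ii). You claim that after localizing on $B_\ell\setminus B_{\ell+1}$ ``the dominant contribution to the $\ell$-th coefficient comes from the interaction with $W_{\ell+1}$ (above) rather than $W_{\ell-1}$ (below) \dots and that the lower bubble contributes only to $\Lambda_{j,\ell}$.'' This is backwards. The leading interaction producing the $D_3(t_\ell/t_{\ell-1})^{\pui}U_0(z_\ell)$ and $nD_3(t_\ell/t_{\ell-1})^{n/2}\partial_j U_0(z_\ell)$ terms comes from $W_{\ell-1}$, the \emph{lower} (wider, slower) bubble. Indeed, after the rescaling $y=\exp_{\xi_\ell}^{g_{\xi_\ell}}(\mu_\ell y)$ the relevant integral is $(2^*-1)\int W_\ell^{2^*-2}W_{\ell-1}Z_{\ell,j}$, and since $\xi_\ell=\exp^{g_{\xi_{\ell-1}}}_{\xi_{\ell-1}}(\mu_{\ell-1}z_\ell)$, the rescaled bubble $\hat W_{\ell-1}$ is essentially constant on the integration region with value $\hat W_{\ell-1}(0)=(\mu_\ell/\mu_{\ell-1})^{\pui}U_0(z_\ell)(1+o(1))$ and gradient $\partial_j\hat W_{\ell-1}(0)=(\mu_\ell/\mu_{\ell-1})^{n/2}(\partial_j U_0(z_\ell)+o(1))$. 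That the argument is $z_\ell$ (defined relative to $\xi_{\ell-1}$ in \eqref{defxi}) rather than $z_{\ell+1}$ is already the tell-tale that the interaction is with the bubble below. Conversely, the interaction with $W_{\ell+1}$ gives a contribution of order $(\mu_{\ell+1}/\mu_\ell)^{\pui}$, which by \eqref{defmu} is $o\big((\mu_\ell/\mu_{\ell-1})^{\pui}\big)$ since $\gamma_{\ell+1}-\gamma_\ell=\tfrac{n-2}{n-6}(\gamma_\ell-\gamma_{\ell-1})>\gamma_\ell-\gamma_{\ell-1}$ for $n\ge7$; hence $W_{\ell+1}$ contributes only to $\Lambda_{j,\ell}$, not the other way around. (Your parenthetical remark about the convention $\mu_0=1/\mu_1$ is also misplaced here: that convention is only relevant in the $\ell=1$ case, where the interaction with $W_{\ell-1}$ disappears and the mass term from the Green function expansion takes its place.) A secondary slip: you cite the $\ell=1$ formula $nD_1t_1^3\nabla_jh(\xi_1)$ when discussing the $j\ge1$, $\ell\ge2$ coefficient; for $\ell\ge2$ the $\ve\nabla_jh$ term is of order $\ve\mu_\ell^3=o(\Theta_{\ell,j})$ (see \eqref{DL11e}) and does not appear in the statement.
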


In view of Propositions \ref{propnu1} and \ref{propnuge2} and of \eqref{growthmuell} we have to carry on the computations at the order $\ve \mu_1^2$ and $\ve \mu_1^3$ respectively for $\nu_{1,0}$ and $\nu_{1,j}$, $1 \le j \le n$, and at the order $\ve \mu_{\ell}^2$ and $\frac{\mu_{\ell}}{\mu_{{\ell}-1}} \ve \mu_{\ell}^2 = \left( \frac{\mu_{\ell}}{\mu_{{\ell}-1}} \right)^{\frac{n}{2}} $ respectively for  $\nu_{\ell,0}$ and $\nu_{\ell,j}$, for $\ell \ge 2 $ and $1 \le j \le n$.  In the following we will let: 
\ben \label{defTHETA10}
\Theta_{1,0}  = \ve \mu_1^2 \sim \mu_1^{n-2} 
\een
since $(M,g)$ is locally conformally flat and, for any $1 \le j \le n$,
\ben \label{defTHETA1j}
\Theta_{1,j}  = \ve \mu_1^3 \sim \mu_1^{n-1}. 
\een
For $2 \le \ell \le k$ we define similarly:
\ben \label{defTHETAell}
\Theta_{\ell,0}  = \ve \mu_\ell^2   
\quad \textrm{ and, for any } 1 \le j \le n, \quad 
\Theta_{\ell,j}  =  \frac{\mu_{\ell}}{\mu_{\ell-1}}\ve \mu_{\ell}^2 \sim  \left( \frac{\mu_{\ell}}{\mu_{{\ell}-1}} \right)^{\frac{n}{2}} . 
\een
For $a, b > 0$, the symbol $a\sim b$ in the previous expressions means that there exists a constant $C >0$, independent of $\ve$ and of the choice of $\big( t_{1}, \xi_{1}, (t_{i}, z_{i})_{2 \le i \le k} \big) \in S_k$, such that 
\[ \frac{1}{C}a \le b \le C a. \]  
Throughout the proof of Propositions \ref{propnu1} and \ref{propnuge2} we will also make use of the following fact: 
\ben \label{coolfact} \ve \mu_2^2 \sim \left( \frac{\mu_2}{\mu_1} \right)^\pui =  o (\ve \mu_1^3) \een
as $\ve \to 0$ which, since $(M,g)$ is locally conformally flat, follows easily from \eqref{growthmuell}.

\begin{proof}[Proof of Propositions  \ref{propnu1}  and \ref{propnuge2}]
First, $\vp_{\ve}$ satisfies \eqref{estoptvp}, where $\Psi_\ve$ is as in \eqref{CapPsi}. As shown in the proof of Proposition \ref{propnonlin} (see e.g. \eqref{eqT} and \eqref{propnonlinX}), Proposition \ref{proplin} applies to \eqref{eqvpC0} and yields, for any $1 \le i \le k$:
\ben \label{DL1}
  \sum_{j=0}^n  | \nu_{i, j}| \lesssim \ve \mu_i^2.
\een
As a consequence of \eqref{DL1} and of \eqref{psZik} we have, for any $\ell \in \{1, \dots, k\}$ and $0 \le q \le n$,
\ben \label{DL2}
\int_M \Big( \sum \limits_{\underset{j =0 .. n }{i=1..k}}   \nu_{i,j} ( \triangle_g   + c_n S_g + \ve h)Z_{i,j} \Big) Z_{\ell,q} = \Vert \nabla V_q \Vert_{L^2(\RR^n)}^2 \nu_{\ell,q} + o (\Theta_{\ell,q}).
\een
Let $1 \le \ell \le k$ be fixed. We write \eqref{eqvpC0} in the following way:
\ben \label{DL3}
\bal
 &\sum \limits_{\underset{j =0 .. n }{i=1..k}}  \nu_{i,j} \big( \triangle_g   + c_n S_g + \ve h\big)Z_{i,j} \\
  & =    \Big(\triangle_g + c_n S_g + \ve h\Big) \Big( \sum_{i=1}^k W_{i} \Big) - \Big(  \sum_{i=1}^k W_{i} \Big) ^{2^*-1}  \\
  & +  \big(\triangle_g + c_n S_g + \ve h\big) \vp_{\ve} - (2^*-1)W_{\ell}^{2^*-2} \vp_{\ve} \\
& - (2^*-1) \Bigg[ \Big(  \sum_{i=1}^k W_{i} \Big) ^{2^*-2} - W_\ell^{2^*-2}  \Bigg] \vp_\ve \\
  & - \Bigg[ \Big(  \sum_{i=1}^k W_{i} + \vp_\ve \Big) ^{2^*-1}  - \Big(  \sum_{i=1}^k W_{i} \Big) ^{2^*-1}  - (2^*-1) \Big(  \sum_{i=1}^k W_{i} \Big) ^{2^*-2} \vp_\ve \Bigg] \\
& = I_1 + I_2 + I_3 + I_4,\eal 
  \een
where each term $I_i$, $1 \le i \le 4$, is given by the $i$-th line in the right-hand side of \eqref{DL3}. We first claim that the sole contribution to $\nu_{\ell,j}$ is given by $I_1$:

\begin{lemme} \label{lemmaDL}
For any $1 \le \ell \le k$ and any $0 \le j \le n$ there holds:
$$ \int_M(I_2 + I_3 + I_4)Z_{\ell,j} dv_g = o ( \Theta_{\ell,j}) ,$$ 
where  $\Theta_{\ell,j}$ is as in \eqref{defTHETA10}, \eqref{defTHETA1j} and \eqref{defTHETAell}.
\end{lemme}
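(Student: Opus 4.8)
The plan is to estimate each of the three integrals $\int_M I_2 Z_{\ell,j}\,dv_g$, $\int_M I_3 Z_{\ell,j}\,dv_g$, $\int_M I_4 Z_{\ell,j}\,dv_g$ separately and show that each one is $o(\Theta_{\ell,j})$. Throughout, the two tools are: the pointwise bound $|\vp_\ve|\lesssim\Psi_\ve$ from \eqref{estoptvp} (so that $\vp_\ve$ satisfies the pointwise estimates \eqref{estvpi}--\eqref{estvp1}), and the pointwise descriptions of $Z_{\ell,j}$ coming from \eqref{defZi} (roughly $|Z_{\ell,j}|\lesssim\mu_\ell^{-1}\theta_\ell W_\ell$ and, more importantly for the l.c.f. case, $|Z_{\ell,j}|\lesssim W_\ell$ with fast decay $W_\ell\lesssim\mu_\ell^{\pui}\theta_\ell^{-(n-2)}$ away from $\xi_\ell$). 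The integrals will be split over the annular regions $B_i\setminus B_{i+1}$ and the decay/concentration rates from \eqref{growthmuell} and \eqref{rapportsbulles} will be used repeatedly, together with the elementary integral estimates \eqref{tech1}--\eqref{tech2} from the Appendix.

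First I would handle $I_2=\big(\triangle_g+c_nS_g+\ve h\big)\vp_\ve-(2^*-1)W_\ell^{2^*-2}\vp_\ve$. The key point is that $Z_{\ell,j}$ is (up to a negligible error by \eqref{lapZ0}--\eqref{lapZi}) a solution of the linearized equation $\big(\triangle_g+c_nS_g\big)Z_{\ell,j}=(2^*-1)W_\ell^{2^*-2}Z_{\ell,j}$, so that after integrating by parts
\[
\int_M I_2\,Z_{\ell,j}\,dv_g=\int_M \vp_\ve\,\Big[\big(\triangle_g+c_nS_g+\ve h\big)Z_{\ell,j}-(2^*-1)W_\ell^{2^*-2}Z_{\ell,j}\Big]\,dv_g,
\]
and the bracket is $O(\ve W_\ell)+O(\mu_\ell^{\pui})$ (l.c.f.) or $O(\mu_\ell W_\ell)$ (otherwise) by \eqref{lapZ0}--\eqref{lapZi}. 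Bounding $|\vp_\ve|$ by $\Psi_\ve$ and integrating region by region using \eqref{tech1}--\eqref{tech2}, every resulting term is of strictly smaller order than $\Theta_{\ell,j}$; the borderline contributions come from $B_{\ell-1}\setminus B_\ell$ and $B_\ell\setminus B_{\ell+1}$ and are controlled using \eqref{growthmuell} and \eqref{coolfact}. For $I_3=-(2^*-1)\big[(\sum_i W_i)^{2^*-2}-W_\ell^{2^*-2}\big]\vp_\ve$, one uses \eqref{diffbulle}: on $B_\ell\setminus B_{\ell+1}$ the difference is $\lesssim W_\ell^{2^*-3}(W_{\ell+1}+W_{\ell-1})$, and away from that annulus one has $\sum_i W_i\lesssim W_p$ on $B_p\setminus B_{p+1}$. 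Multiplying by $|Z_{\ell,j}|\lesssim\mu_\ell^{-1}\theta_\ell W_\ell$ and by $|\vp_\ve|\lesssim\Psi_\ve$, and integrating over each annulus with \eqref{tech1}--\eqref{tech2}, gives a bound $o(\Theta_{\ell,j})$, the main terms again coming from the two adjacent annuli and closing by \eqref{growthmuell}. For $I_4$, the nonlinear Taylor remainder, on the region $\sum_i W_i$ dominates one has $|I_4|\lesssim(\sum_i W_i)^{2^*-3}\vp_\ve^2$ (when $n\ge6$, i.e. $2^*-1\le3$; for $n\ge7$ this is exactly the relevant range and $2^*-1<3$), so that $|I_4|\lesssim(\sum_i W_i)^{2^*-3}\Psi_\ve^2$; this was essentially already estimated in \eqref{nonlin11}--\eqref{nonlin33}, where it was shown that $(\sum_i W_i)^{2^*-3}\Psi_\ve^2$ is, up to a constant multiple of $\mu_1^4+\mu_2^2\mu_1^{-2}=o(1)$, bounded by the right-hand side of \eqref{hypf}; multiplying by $|Z_{\ell,j}|$ and integrating gives a gain of a factor $o(1)$ over the corresponding contribution, hence $o(\Theta_{\ell,j})$.

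The hard part will be the bookkeeping in the l.c.f. case for $\nu_{\ell,j}$ with $1\le j\le n$, where the target order $\Theta_{\ell,j}\sim(\mu_\ell/\mu_{\ell-1})^{n/2}$ is very small — smaller than $\Theta_{\ell,0}=\ve\mu_\ell^2\sim(\mu_\ell/\mu_{\ell-1})^{\pui}$ by a full power of $\mu_\ell/\mu_{\ell-1}$. One must exploit the extra odd symmetry of $Z_{\ell,j}$ (it is, at leading order, an odd function $V_j$ in the rescaled variable) to gain this extra factor: naive absolute-value estimates only give $O(\Theta_{\ell,0})$, so in the crucial annulus $B_\ell\setminus B_{\ell+1}$ one has to integrate against the \emph{signed} function $Z_{\ell,j}\approx V_j$ and use that the leading even part of $\vp_\ve$ (and of the relevant parts of $I_2,I_3,I_4$) integrates to zero against $V_j$, leaving only a genuinely smaller, higher-order contribution. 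Concretely this means keeping the estimate $|\vp_\ve|\lesssim\Psi_\ve$ but also invoking that $\vp_\ve\in K_k^\perp$ (so $\int_M\vp_\ve(\triangle_g+c_nS_g+\ve h)Z_{\ell,j}=0$) to convert apparent $O(\Theta_{\ell,0})$ terms into exact cancellations plus $o(\Theta_{\ell,j})$ errors. Once this is set up, the three integrals are dispatched exactly as above, and the lemma follows by summing the three $o(\Theta_{\ell,j})$ bounds.
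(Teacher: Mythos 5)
Your treatment of the three terms $I_2,I_3,I_4$ matches the paper's strategy (integrate by parts for $I_2$ using \eqref{lapZ0}--\eqref{lapZi}, bound $I_3$ by $W_\ell^{2^*-3}(W_{\ell-1}+W_{\ell+1})|\vp_\ve|$ on $B_\ell\setminus B_{\ell+1}$ and by $W_i^{2^*-2}|\vp_\ve|$ elsewhere, bound $I_4$ by $(\sum W_i)^{2^*-3}\Psi_\ve^2$, then integrate against $|Z_{\ell,j}|$ annulus by annulus using \eqref{estoptvp} and \eqref{tech1}--\eqref{tech2}). That part is essentially the paper's proof.

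Your last paragraph, however, rests on a misconception. You claim that for $1\le j\le n$ ``naive absolute-value estimates only give $O(\Theta_{\ell,0})$'' and that one must invoke an odd-symmetry cancellation, reinforced by the orthogonality $\vp_\ve\in K_k^\perp$. This is not the case, and the paper does neither. The extra factor $\mu_\ell/\mu_{\ell-1}$ separating $\Theta_{\ell,j}$ from $\Theta_{\ell,0}$ is obtained purely from the faster \emph{pointwise decay} of $Z_{\ell,j}$ compared with $Z_{\ell,0}$: from \eqref{defZi} one has $|Z_{\ell,0}|\lesssim W_\ell\lesssim\mu_\ell^{\pui}\theta_\ell^{2-n}$ but $|Z_{\ell,j}|\lesssim\mu_\ell^{\frac n2}\theta_\ell^{1-n}$ for $j\ge1$ (stated as \eqref{estbetterZ} in the paper). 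That extra power of $\mu_\ell/\theta_\ell$ is exactly what upgrades $O(\Theta_{\ell,0})$ to $o(\Theta_{\ell,j})$ in the outer annuli $B_i\setminus B_{i+1}$, $i\le\ell-1$; on the annulus $B_\ell\setminus B_{\ell+1}$ even the coarser bound $|Z_{\ell,j}|\lesssim W_\ell$ already yields $(\mu_\ell/\mu_{\ell-1})^{\frac{n+2}2}=o(\Theta_{\ell,j})$. This is a decay fact (true because $V_j(x)=O(|x|^{1-n})$ at infinity versus $V_0(x)=O(|x|^{2-n})$), not a signed-cancellation fact. Moreover the orthogonality $\int_M\vp_\ve(\triangle_g+c_nS_g+\ve h)Z_{\ell,j}\,dv_g=0$ would only touch the $I_2$ integral after integration by parts, and would give you nothing for $I_3$ and $I_4$, so the mechanism you propose could not in any case carry the whole lemma. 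In short: replace the final paragraph by the observation that the sharper decay of $Z_{\ell,j}$, $j\ge1$, supplies the missing factor of $\mu_\ell/\mu_{\ell-1}$ in the absolute-value estimates, and the argument closes.
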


\begin{proof}[Proof of Lemma \ref{lemmaDL}]
We estimate each term separately. 

\medskip

\textbf{Estimation of $I_2$.} Let $0 \le j \le n$. First, integrating by parts shows that
$$ \int_M I_2 Z_{\ell,j} dv_g = \int_M \vp_{\ell,\ve} \Big(  \big(\triangle_g + c_n S_g + \ve h\big) Z_{\ell,j} - (2^*-1)W_{\ell}^{2^*-2} Z_{\ell,j} \Big) dv_g. $$
Assume first that $j = 0$. By \eqref{lapZ0} and \eqref{estoptvp}, and since $|Z_{\ell,0}|\lesssim W_{\ell}$, we get
\[ \bal
\Bigg|\int_{B_{\ell+1}}\vp_{\ell,\ve} \Big(  \big(\triangle_g + c_n S_g &+ \ve h\big) Z_{\ell,j} - (2^*-1)W_{\ell}^{2^*-2} Z_{\ell,j} \Big) dv_g \Bigg| \\
& \lesssim \mu_{\ell}^{\frac{n-6}{2}} \int_{B_{\ell+1}} |\vp_\ve| dv_g   \\
& \lesssim  \mu_{\ell}^{\frac{n-6}{2}} \sum_{i=\ell+1}^k  \int_{B_i \backslash B_{i+1}} |\vp_\ve| dv_g  = o(\ve \mu_{\ell}^2). 
\eal  \]
Similarly, using again \eqref{lapZ0} and  \eqref{estoptvp},
\[ \bal 
\Bigg| \int_{M \backslash B_\ell} \vp_{\ell,\ve} \Big(  \big(\triangle_g + c_n S_g &+ \ve h\big) Z_{\ell,j} - (2^*-1)W_{\ell}^{2^*-2} Z_{\ell,j} \Big) dv_g  \Bigg| \\ 
& \lesssim  \sum_{i=1}^{\ell-1}  \int_{B_i \backslash B_{i+1}} \Big( \mu_{\ell}^{\pui} + \ve W_{\ell}  \Big) |\vp_\ve| dv_g \\
& = o(\ve \mu_{\ell}^2),
\eal \]
where this last integral is zero when $\ell=1$, and 
\[ \bal 
\Bigg| \int_{B_\ell \backslash B_{\ell+1}}  \vp_{\ell,\ve} \Big(  \big(\triangle_g + c_n S_g &+ \ve h\big) Z_{\ell,j} - (2^*-1)W_{\ell}^{2^*-2} Z_{\ell,j} \Big)  dv_g  \Bigg|  \\
& \lesssim   \int_{B_\ell \backslash B_{\ell+1}} \Big( \mu_{\ell}^{\pui} + \ve W_{\ell}  \Big) |\vp_\ve| dv_g \\
& = o(\ve \mu_{\ell}^2).
\eal \]
In conclusion we have 
\ben \label{DL42}
\int_M I_2 Z_{\ell,0} dv_g  =  o( \Theta_{\ell,0}),
\een
for any $1 \le \ell \le k$, where $\Theta_{1,0}$ and $\Theta_{\ell,0}$ are given respectively by \eqref{defTHETA10} and \eqref{defTHETAell}. 

\smallskip

Assume now that $1 \le j \le n$. Integrating by parts and using \eqref{lapZi} 
gives
$$ 
\bal
\left|\int_M I_2 Z_{\ell,j} dv_g \right| &  \lesssim  \ve \int_M |\vp_{\ell,\ve}| |Z_{\ell,j}|dv_g + \int_M \mu_{\ell}^{\frac{n}{2}} \theta_{\ell}^{2-n} |\vp_{\ell,\ve} |dv_g .
\eal $$
We first have, with \eqref{estoptvp}:
\[\bal 
 \int_{B_{\ell+1}} \big( \ve  |Z_{\ell,j}| + \mu_{\ell}^{\frac{n}{2}} \theta_{\ell}^{2-n}\big) |\vp_{\ell,\ve} |dv_g  & \lesssim \mu_{\ell}^{2 - \frac{n}{2}} \int_{B_{\ell+1}} |\vp_\ve| dv_g \\
 & =    o(\Theta_{\ell,j})
 \eal \]
 for all $1 \le \ell \le k$, where $\Theta_{\ell,j}$ is given by \eqref{defTHETA1j} and \eqref{defTHETAell}. Straightforward computations using \eqref{estoptvp} to estimate the integral over $B_{\ell} \backslash B_{\ell+1}$ and $M \backslash B_\ell$ similarly give 
\ben \label{DL51}
  \int_M I_2 Z_{\ell,j} dv_g  = o(\Theta_{\ell,j}) 
  \een
for any $1 \le \ell \le k$ and $1 \le j \le n$, where $\Theta_{1,j}$ and $\Theta_{\ell,j}$ are given respectively by \eqref{defTHETA1j} and \eqref{defTHETAell}. 

\medskip

\textbf{Estimation of $I_3$.} By  \eqref{estoptvp} and \eqref{DL3} we have
\[ |I_3(x)| \lesssim \left \{
\bal
& W_{i}^{2^*-2} |\phi_{\ve}| & \textrm{ if } x \in B_{i} \backslash B_{i+1}, \quad i \neq \ell \\
& W_{\ell}^{2^*-3} \big( W_{\ell-1} + W_{\ell+1} \big) |\vp_\ve|  & \textrm{ if } x \in B_\ell \backslash B_{\ell+1}
\eal \right. ,\]
where it is intended that the term $W_{\ell-1}$ vanishes when $\ell=1$ and the term $W_{\ell+1}$ vanishes when $\ell=k$. So that 
\[ 
\bal
\left| \int_M I_3 Z_{\ell,j} dv_g \right| &\lesssim \sum_{i \neq \ell} \int_{B_{i} \backslash B_{i+1}} W_{i}^{2^*-2} |\vp_{\ve}| |Z_{\ell,j}| dv_g \\
& + \int_{B_\ell \backslash B_{\ell+1}} W_{\ell}^{2^*-3} \big( W_{\ell-1} + W_{\ell+1} \big) |\vp_\ve| |Z_{\ell,j}| dv_g .
\eal\]
Assume first that $\ell =1$ and recall that $B_1 = M$. Using \eqref{estoptvp} we obtain
\[ \bal \int_{M \backslash B_{2}} W_{1}^{2^*-3}  W_{2} |\vp_\ve| |Z_{1,j}| dv_g & \lesssim 
 o(\mu_1^{n-1}) + \left( \frac{\mu_2}{\mu_1}\right)^{\frac{n+2}{2}} = o(\Theta_{\ell,j})
\eal\]
for all $0 \le j \le n$, where $\Theta_{1,j}$ is as in \eqref{defTHETA10} and \eqref{defTHETA1j}. Similarly we have, using \eqref{estoptvp} and since $|Z_{1,j}| \lesssim W_1$:
\[ \bal
 \sum_{i \ge 2} \int_{B_{i} \backslash B_{i+1}} W_{i}^{2^*-2} |\vp_{\ve}| |Z_{1,j}| dv_g  & \lesssim  \sum_{i \ge 2} \int_{B_{i} \backslash B_{i+1}}  W_{i}^{2^*-2} W_1 |\vp_{\ve}| dv_g \\
& \lesssim \mu_2 \mu_1^{n-2} + \mu_2^{\frac{n+2}{2}}\mu_1^{-\frac{n+2}{2}} +\mu_2^{\frac{n}{2}} \mu_1^{-\frac{n-2}{2}} \\
& = o (\Theta_{1,j}).
 \eal
 \]
In the end we obtain 
\ben  \label{DL41}
 \int_M I_3 Z_{1,j} dv_g  = o(\Theta_{1,j})
\een
for all $0 \le j \le n$. 

\smallskip

Assume now that $2 \le \ell \le k$. First, straightforward computations using \eqref{estoptvp} show that, for any $0 \le j \le n$, 
\[ \bal
 \int_{B_\ell \backslash B_{\ell+1}} W_{\ell}^{2^*-3} \big( W_{\ell-1} + W_{\ell+1} \big) |\vp_\ve| |Z_{\ell,j}| dv_g  & \lesssim  \int_{B_\ell \backslash B_{\ell+1}} W_{\ell}^{2^*-2} \big( W_{\ell-1} + W_{\ell+1} \big) |\vp_\ve| dv_g \\
 & \lesssim  \left( \frac{\mu_\ell}{\mu_{\ell-1}}\right)^{\frac{n+2}{2}} + \ve \mu_\ell^3 +  \left( \frac{\mu_{\ell+1}}{\mu_{\ell-1}}\right)^{\frac{n+2}{2}} \\
& = o(\Theta_{\ell,j}). 
\eal \]
Similarly, using again \eqref{estoptvp} we have
\[ \bal
 \sum_{i \ge \ell+1}   \int_{B_{i} \backslash B_{i+1}} W_{i}^{2^*-2} |\vp_{\ve}| |Z_{\ell,j}| dv_g & \lesssim \frac{\mu_{\ell+1}^{\frac{n}{2}}}{\mu_\ell \mu_{\ell-1}^\pui} + \mu_{\ell+1} \left( \frac{\mu_{\ell+1}}{\mu_\ell} \right)^{\pui} \\
 & = o(\Theta_{\ell,j}) 
\eal\]
for any $0 \le j \le n$. Finally, on the one hand, using that $|Z_{\ell,0}| \lesssim W_\ell$ we find
\[ \bal  \sum_{i \le \ell-1}   \int_{B_{i} \backslash B_{i+1}} W_{i}^{2^*-2} |\vp_{\ve}| |Z_{\ell,0}| dv_g &\lesssim \mu_\ell^{\pui} \mu_{\ell-2}^{1 - \frac{n}{2}} + \mu_\ell^{\frac{n+2}{2}} \mu_{\ell-1}^{- \frac{n+2}{2}} + o \left(\left( \frac{\mu_\ell}{\mu_{\ell-1}} \right)^{\pui}\right) \\
& = o(\Theta_{\ell,0}). \eal \]
On the other hand, using that $|Z_{\ell,j}| \lesssim \mu_\ell^{\frac{n}{2}} \theta_{\ell}^{1-n}$ for $1 \le j \le n$, where $\theta_\ell$ is as in \eqref{defti}, we have:
\[ \bal  \sum_{i \le \ell-1}  \int_{B_{i} \backslash B_{i+1}} W_{i}^{2^*-2} |\vp_{\ve}| |Z_{\ell,j}| dv_g & \lesssim \mu_\ell^{\frac{n}{2}} \mu_{\ell-1}^{-1} \mu_{\ell-2}^{1 - \frac{n}{2}} +  o \left(\left( \frac{\mu_\ell}{\mu_{\ell-1}} \right)^{\frac{n}{2}}\right) \\
& = o(\Theta_{\ell,j}). \eal \]
In the end we thus have 
\ben  \label{DL42bis}
 \int_M I_3 Z_{\ell,j} dv_g  = o(\Theta_{\ell,j})
\een
for all $0 \le j \le n$.

\medskip

\textbf{Estimation of $I_4$.}  We finally estimate the contribution of $I_4$. By definition and by \eqref{estoptvp} we have 
\[ |I_4| \lesssim \big( \sum_{i=1}^k W_i \big)^{2^*-3} |\vp_\ve|^2 \quad \textrm{ in } M\]
and therefore 
\[ \left| \int_M I_4 Z_{\ell,j} dv_g \right| \lesssim  \sum_{i=1}^k \int_{B_i \backslash B_{i+1}} W_i^{2^*-3}|Z_{\ell,j}| |\vp_\ve|^2 dv_g .\]
Using again \eqref{estoptvp} and the following estimate on $Z_{\ell,j}$:
\ben \label{estbetterZ}
 |Z_{\ell,j} | \lesssim \left \{ \bal & W_{\ell} & \textrm{ if } j = 0 \\ & \mu_{\ell}^{\frac{n}{2}} \theta_{\ell}^{1-n} & \textrm{ if } 1 \le  j \le n \\ \eal \right. ,\een
we get by straightforwards computations that mimic those that led to  \eqref{DL42}, \eqref{DL51}, \eqref{DL41} and \eqref{DL42bis} that
\ben \label{DL62}
\int_M I_4 Z_{\ell,j} dv_g =  o (\Theta_{\ell,j}), 
\een
for any $1 \le \ell \le k$ and $0 \le j \le n$, where $\Theta_{\ell,j}$ is as in  \eqref{defTHETA10},  \eqref{defTHETA1j} and \eqref{defTHETAell}. Combining \eqref{DL42}, \eqref{DL51}, \eqref{DL41}, \eqref{DL42bis} and \eqref{DL62} concludes the proof of Lemma \ref{lemmaDL}.
\end{proof}

Let us now estimate the contribution due to $I_1$ in \eqref{DL3}. We first claim that the following holds:
for any $1 \le \ell \le k$ and $0 \le j \le n$,
\ben \label{stepX}
\bal
 \int_M I_1 Z_{\ell,j} dv _g & = \int_M \Bigg[   \Big(\triangle_g + c_n S_g + \ve h\Big)W_\ell - W_\ell^{2^*-1} \\
 & - \Big[ \big( \sum_{i=1}^k W_i\big)^{2^*-1} - \sum_{i=1}^k W_i^{2^*-1} \Big] \Bigg] Z_{\ell,j} + o(\Theta_{\ell,j}),
\eal  
\een
where $\Theta_{\ell,j}$ is as in   \eqref{defTHETA10},  \eqref{defTHETA1j} and \eqref{defTHETAell}. Indeed, by definition of $I_1$ in \eqref{DL3},
\[ I_1 =   \sum_{i=1}^k  \Bigg[ \Big(\triangle_g + c_n S_g + \ve h\Big)W_{i} - W_i^{2^*-1}  \Bigg]- \Bigg[ \big( \sum_{i=1}^k W_i\big)^{2^*-1} - \sum_{i=1}^k W_i^{2^*-1} \Bigg] . \]
By \eqref{lapbulle} and \eqref{rapportsbulles} and since $(M,g)$ is locally conformally flat we have for any $1 \le i \le k$,
\[ \bal
\Big|  \Big(\triangle_g + c_n S_g + \ve h\Big)W_{i} - W_i^{2^*-1} \Big| & \lesssim  
 \frac{\mu_i^{\frac{n+2}{2}}}{\theta_i^4}  
+ \ve W_i  .\\
\eal \]
Let $i > \ell$. Straightforward computations using \eqref{growthmuell} show that
\ben \label{stepX1}
\bal
\int_M 
\frac{\mu_i^{\frac{n+2}{2}}}{\theta_i^4} 
|Z_{\ell,j}| dv_g   &+ \int_M \ve W_i |Z_{\ell,j}| dv_g  \lesssim (\ve + \mu_{i}^2) \ve \mu_{i}^2 = o(\Theta_{\ell,j})  .
\eal
\een
Similarly, if $i < \ell$,
\ben \bal \label{stepX2} 
\int_M &
\frac{\mu_i^{\frac{n+2}{2}}}{\theta_i^4} 
|Z_{\ell,j}| dv_g   + \int_M \ve W_i |Z_{\ell,j}| dv_g \\
& \lesssim 
\left \{
\bal
& \mu_\ell^\pui \mu_{i}^{\pui} & j=0 \\ 
&\mu_\ell^{\frac{n}{2}} \mu_{i}^{\frac{n-4}{2}} & j\in \{1, \dots, n\} 
\eal \right.  \\
& = o(\Theta_{\ell,j}) \eal \een
again by \eqref{growthmuell}. Estimates \eqref{stepX1} and \eqref{stepX2} together prove \eqref{stepX}.

\medskip

We now prove the following claim:

\begin{lemme} \label{finalclaim}
For $\ell \ge 2$,
\[  -\int_{M} \Big[ \big( \sum_{i=1}^k W_i\big)^{2^*-1} - \sum_{i=1}^k W_i^{2^*-1} \Big] Z_{\ell,j} dv_g = -  n D_3\left( \frac{\mu_\ell}{\mu_{\ell-1}}\right)^{\frac{n}{2}} \partial_j U_0(z_{\ell}) + o(\Theta_{\ell,j})\]
for some positive constant $D_3$ (given by \eqref{defD4} below), while for $\ell=1$
\[  -\int_{M} \Big[ \big( \sum_{i=1}^k W_i\big)^{2^*-1} - \sum_{i=1}^k W_i^{2^*-1} \Big] Z_{1,j} dv_g=  o(\Theta_{1,j}).\]
\end{lemme}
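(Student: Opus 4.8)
The plan is to compute the ``cross-interaction'' integral
\[
\mathcal{I}_{\ell,j} := -\int_M \Big[ \big( \textstyle\sum_{i=1}^k W_i\big)^{2^*-1} - \sum_{i=1}^k W_i^{2^*-1} \Big] Z_{\ell,j}\, dv_g
\]
by localising it on the annular regions $B_i\setminus B_{i+1}$ and keeping only the dominant contributions. On $B_\ell\setminus B_{\ell+1}$ the bubble $W_\ell$ dominates the sum, so $(\sum W_i)^{2^*-1} - \sum W_i^{2^*-1}$ is comparable, after expanding the power, to $(2^*-1)W_\ell^{2^*-2}(W_{\ell-1}+W_{\ell+1})$ plus lower-order terms; on the other regions one uses \eqref{rapportsbulles} to see that $Z_{\ell,j}$ is small compared with $W_i$. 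The first step is therefore to isolate, for each pair of consecutive bubbles, the ``overlap integral'' of the type $\int W_\ell^{2^*-2} W_{\ell-1} Z_{\ell,j}$ and $\int W_\ell^{2^*-1}$ times a correction coming from $W_{\ell+1}$, and to check via the order-of-growth relations \eqref{growthmuell}, \eqref{defmu} (and the locally conformally flat fact \eqref{coolfact}) that all the remaining cross terms are $o(\Theta_{\ell,j})$, where $\Theta_{\ell,j}$ is as in \eqref{defTHETA10}--\eqref{defTHETAell}.

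The second step is the explicit evaluation of the surviving term. After rescaling by $\mu_\ell$ around $\xi_\ell$ (as in \eqref{defhpv}), $W_\ell$ becomes $U_0$ up to lower order, $Z_{\ell,j}$ becomes $V_j$, and the neighbouring bubble $W_{\ell-1}$, evaluated near $\xi_\ell$, is essentially constant on the support of $U_0^{2^*-2}V_j$: indeed $d_{g_{\xi_{\ell-1}}}(\xi_{\ell-1},x)\sim \mu_{\ell-1}|z_\ell|$ there, so $W_{\ell-1}(x) \approx \mu_{\ell-1}^{1-n/2} U_0(z_\ell)$ at leading order (and similarly its derivatives produce $\partial_j U_0(z_\ell)$). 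Pulling this constant out and using $\mu_\ell^{-(n-2)/2}\mu_\ell^2 (2^*-1)\int_{\RR^n} U_0^{2^*-2} V_j = $ (a dimensional constant depending only on $n$), combined with $\mu_\ell^{(n-2)/2}\mu_{\ell-1}^{1-n/2} = (\mu_\ell/\mu_{\ell-1})^{(n-2)/2}$ and then the scaling $\mu_\ell^{-1}$ coming from the $Z_{\ell,j}$-normalisation for $j\ge 1$, yields exactly $-nD_3(\mu_\ell/\mu_{\ell-1})^{n/2}\partial_j U_0(z_\ell)$; the constant $D_3$ is read off from this computation and referred to \eqref{defD4}. For $j=0$ one carries out the same expansion but, because $Z_{\ell,0}$ is an even profile while the interaction picks out the ``radial derivative'' direction, the corresponding leading term is absorbed into the $U_0(z_\ell)$ contribution that appears in Proposition~\ref{propnuge2}; what must be checked is that the $W_{\ell-1}$-overlap against $Z_{\ell,0}$ is $o(\Theta_{\ell,0})=o(\ve\mu_\ell^2)$, which again follows from \eqref{growthmuell} since that overlap is of order $(\mu_\ell/\mu_{\ell-1})^{(n-2)/2}\sim \ve\mu_\ell^2$ — so the point is precisely to show its coefficient is not the one producing the $U_0(z_\ell)$ term but rather cancels against the symmetric contribution.

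For the case $\ell=1$ the argument is simpler: there is no bubble below $W_1$, so the only cross-interaction comes from $W_2$ (and higher), which concentrates much faster. Localising on $B_2$ (where $W_2$ dominates) one bounds the integrand by $W_1(x)^{2^*-2} W_2(x)^{?}$ or $W_2^{2^*-2} W_1 |Z_{1,j}|$ and, after the rescaling around $\xi_2$, everything is controlled by powers of $\mu_2/\mu_1$ that are $o(\Theta_{1,j})$ by \eqref{coolfact} (which gives $\ve\mu_2^2\sim(\mu_2/\mu_1)^{(n-2)/2} = o(\ve\mu_1^3)$ in the locally conformally flat case). On $M\setminus B_2$ the integrand is dominated by $W_1^{2^*-2}W_2|Z_{1,j}|$ whose integral is again $o(\Theta_{1,j})$ by the same size estimates used in the proof of Lemma~\ref{lemmaDL}.

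The main obstacle I anticipate is the bookkeeping in the second step: one must be careful to expand the nonlinearity to the correct order so as to extract the precise linear-in-$W_{\ell\pm1}$ term, to identify exactly which power of $\mu_\ell/\mu_{\ell-1}$ it carries after rescaling, and — crucially — to track the derivative structure so that the $j\ge1$ components produce $\partial_j U_0(z_\ell)$ with the right sign and constant $D_3$, while making sure that the contributions from $W_{\ell+1}$ (the faster bubble seen from the region of $W_\ell$) and from the quadratic remainder terms are genuinely negligible, i.e. $o(\Theta_{\ell,j})$. This is delicate because several of the competing terms are of the \emph{same} order $(\mu_\ell/\mu_{\ell-1})^{(n-2)/2}$ or $\ve\mu_\ell^2$, so what is really needed is not a crude size bound but an identification of their coefficients — in particular showing that only the $W_{\ell-1}$-overlap survives with a nonzero coefficient, and that it equals the claimed expression.
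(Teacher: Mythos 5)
Your overall strategy (localising on the annular regions $B_i\setminus B_{i+1}$, isolating the $W_\ell^{2^*-2}\cdot(\text{neighbouring bubble})$ overlap, rescaling around $\xi_\ell$) is the same as the paper's. But the evaluation of the surviving term --- the part you yourself flag as delicate --- is wrong on both the $j\ge 1$ and the $j=0$ cases, and this is precisely where the content of the lemma lies.

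For $1\le j\le n$ you propose to pull out the constant $\hat W_{\ell-1}(0)\approx(\mu_\ell/\mu_{\ell-1})^{(n-2)/2}U_0(z_\ell)$ and claim that $\int_{\RR^n}U_0^{2^*-2}V_j\,dy$ is a dimensional constant. It is not: for $j\ge 1$ this integral \emph{vanishes} by oddness (equivalently, because $V_j=-n\partial_j U_0$ decays like $|x|^{1-n}$, so $\int\triangle V_j=0$). The constant-extraction argument therefore gives a zero leading term. The nonzero contribution comes from the linear term in the Taylor expansion $\hat W_{\ell-1}(y)=\hat W_{\ell-1}(0)+\mu_\ell/\mu_{\ell-1}\, y\cdot\nabla U_0(z_\ell)(\ldots)+\dots$, which is exactly one power of $\mu_\ell/\mu_{\ell-1}$ higher --- hence the $(\mu_\ell/\mu_{\ell-1})^{n/2}$ in the claim, not $(\mu_\ell/\mu_{\ell-1})^{(n-2)/2}$. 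Your attempt to recover the extra factor from ``the scaling $\mu_\ell^{-1}$ coming from the $Z_{\ell,j}$-normalisation'' does not work: after rescaling, $\hat Z_{\ell,j}$ is simply $V_j$ and no extra $\mu_\ell^{-1}$ survives. The paper avoids the Taylor-expansion bookkeeping altogether by observing $V_j=-n\partial_j U_0$ and integrating by parts, which turns $(2^*-1)\int U_0^{2^*-2}V_j\hat W_{\ell-1}$ into $n\int U_0^{2^*-1}\partial_j\hat W_{\ell-1}$; then $\partial_j\hat W_{\ell-1}(0)=(\mu_\ell/\mu_{\ell-1})^{n/2}\partial_j U_0(z_\ell)(1+o(1))$ and $D_3=\int U_0^{2^*-1}$ drop out.

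The $j=0$ paragraph is worse: you conclude that the $W_{\ell-1}$-overlap against $Z_{\ell,0}$ should be $o(\Theta_{\ell,0})$ ``because its coefficient cancels against the symmetric contribution''. This is false, and if it were true the statement of Proposition~\ref{propnuge2} would be wrong: the overlap is exactly where the $-D_3\left(t_\ell/t_{\ell-1}\right)^{\pui}U_0(z_\ell)$ term in $\nu_{\ell,0}$ comes from. The reason the constant term survives here is that $\int_{\RR^n}U_0^{2^*-2}V_0\,dy\neq 0$: unlike $V_j$ for $j\ge1$, the function $V_0$ decays only like $|x|^{2-n}$, so it carries a nonzero flux at infinity and $(2^*-1)\int U_0^{2^*-2}V_0=\int U_0^{2^*-1}=D_3$. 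The resulting leading contribution is therefore of exact order $(\mu_\ell/\mu_{\ell-1})^{(n-2)/2}\sim\Theta_{\ell,0}$, and is not negligible. So the distinction between $j=0$ and $j\ge1$ is governed by a parity/decay fact about the kernel functions $V_j$, not by any cancellation, and this is the missing idea in your proposal.
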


\begin{proof}
We first assume that $\ell \ge2$. On $B_{\ell+1}$ we have 
\[   \left| \big( \sum_{i=1}^k W_i\big)^{2^*-1} - \sum_{i=1}^k W_i^{2^*-1} \right| \lesssim W_{\ell+1}^{2^*-2} (\sum_{p \ge \ell+2}W_{p} + W_\ell), \]
with the convention that $W_{i} = 0$ if $i \ge k+1$, so that 
\[ \bal \left| \int_{B_{\ell+1}} \Big[ \big( \sum_{i=1}^k W_i\big)^{2^*-1} - \sum_{i=1}^k W_i^{2^*-1} \Big] Z_{\ell,j}  dv_g \right| & \lesssim \left( \frac{\mu_{\ell+2}}{\mu_\ell} \right)^\pui +  \left( \frac{\mu_{\ell+1}}{\mu_\ell} \right)^{\frac{n}{2}} \\
& = o (\Theta_{\ell,j}),\eal \]
since by \eqref{growthmuell} we have (for $\ell \ge 2$):
\[ \frac{\mu_{\ell+1}}{\mu_\ell}  = o \left( \frac{\mu_{\ell}}{\mu_{\ell-1}}  \right). \]
Independently, if $i \le \ell-1$, we have 
\[   \left| \big( \sum_{i=1}^k W_i\big)^{2^*-1} - \sum_{i=1}^k W_i^{2^*-1} \right| \lesssim W_i^{2^*-2} (W_{i+1} + W_{i-1}) \quad \textrm{ in } B_i \backslash B_{i+1}, \]
with the convention that $W_0 = 0$. Using that $|Z_{\ell,0}| \lesssim W_{\ell}$ and that $|Z_{\ell,j}| \lesssim \mu_\ell^{\frac{n}{2}} \theta_{\ell}^{1-n}$ then gives that 
\[ \bal \left| \int_{M\backslash B_\ell} \Big[ \big( \sum_{i=1}^k W_i\big)^{2^*-1} - \sum_{i=1}^k W_i^{2^*-1} \Big] Z_{\ell,0} dv_g \right| & \lesssim \sum_{i=1}^{\ell-1} \Bigg( \left( \frac{\mu_\ell}{\mu_{i}}\right)^{\frac{n}{2}}  +  \left( \frac{\mu_\ell}{\mu_{i-1}}\right)^{\frac{n-2}{2}} \Bigg)\\
& = o (\Theta_{\ell,0}),
\eal \]
and that 
\[ \bal \left| \int_{M\backslash B_\ell} \Big[ \big( \sum_{i=1}^k W_i\big)^{2^*-1} - \sum_{i=1}^k W_i^{2^*-1} \Big] Z_{\ell,j} dv_g \right| & \lesssim \sum_{i=1}^{\ell-1} \Bigg( \left( \frac{\mu_\ell}{\mu_{i}}\right)^{\frac{n+1}{2}}  + \frac{\mu_\ell^{\frac{n}{2}}}{\mu_{i-1}^\pui \mu_i} \Bigg)\\
& = o( \Theta_{\ell,j}),
\eal \]
with the convention that the term $ \left( \frac{\mu_\ell}{\mu_{i-1}}\right)^{\frac{n-2}{2}}$ vanishes if $i=1$. It therefore remains to estimate the integral over $B_\ell \backslash B_{\ell+1}$. Using \eqref{rapportsbulles} we can write that
\[ \bal \Bigg| \big( \sum_{i=1}^k W_i\big)^{2^*-1} &- \sum_{i=1}^k W_i^{2^*-1} - (2^*-1) W_\ell^{2^*-2} \sum_{i=1}^{\ell-1} W_i\Bigg|\\
&  \lesssim W_{\ell}^{2^*-2} \sum_{p \ge \ell+1} W_p + W_\ell^{2^*-3} (W_{\ell+1}^2 + W_{\ell-1}^2), \eal \]
so that 
\[ \bal
& \left| \int_{B_\ell \backslash B_{\ell+1}} \Bigg[ \big( \sum_{i=1}^k W_i\big)^{2^*-1} - \sum_{i=1}^k W_i^{2^*-1} - (2^*-1) W_\ell^{2^*-2} \sum_{i=1}^{\ell-1} W_i \Bigg] Z_{\ell,j} dv_g \right| \\
& \lesssim  \left( \frac{\mu_{\ell+1}}{\mu_\ell}\right)^{\frac{n-2}{2}} +  \left\{ \bal &  \left( \frac{\mu_{\ell}}{\mu_{\ell-1}}\right)^{\frac{n}{2}} & j = 0 \\ &  \left( \frac{\mu_{\ell}}{\mu_{\ell-1}}\right)^{\frac{n+1}{2}} & j \in \{1, \dots, n \}  \eal  \right. \\
& = o(\Theta_{\ell,j}). \eal \]
For the last line we wrote that 
\[ \left( \frac{\mu_{\ell+1}}{\mu_\ell}\right)^{\frac{n-2}{2}} = \left( \frac{\mu_{\ell}}{\mu_{\ell-1}} \right)^{\frac{n-2}{2}} \left(\frac{\mu_{\ell+1} \mu_{\ell-1}}{\mu_\ell^2} \right)^\pui = o(\Theta_{\ell,j}) , \]
and used \eqref{growthmuell}, which shows that
\[ \left(\frac{\mu_{\ell+1} \mu_{\ell-1}}{\mu_\ell^2} \right)^\pui = o \left( \frac{\mu_\ell}{\mu_{\ell-1}} \right).\] 
We have thus proven that 
\[\bal    -\int_{M} \Big[ \big( &\sum_{i=1}^k W_i\big)^{2^*-1} -\sum_{i=1}^k W_i^{2^*-1} \Big] Z_{\ell,j} dv_g \\
& = -(2^*-1) \int_{B_\ell \backslash B_{\ell+1}} W_\ell^{2^*-2}\Big( \sum_{i \le \ell-1}W_{i}\Big) Z_{\ell,j} dv_g + o(\Theta_{\ell,j}).\eal  \]
Let, for any $y \in B_0(\frac{\mu_\ell}{\mu_{\ell-1}})$ and any $i \le \ell-1$:
\[ \hat{W}_i(y) = \mu_\ell^\pui W_i \big( \exp_{\xi_\ell}^{g_{\xi_\ell}}(\mu_\ell y) \big). \]
As is easily checked, $V_j$ defined in \eqref{defVi} satisfies 
\[ V_j(x) = - n\partial_j U_0(x) \quad \textrm{ for all } x \in \RR^n. \]
Let $i \le \ell-1$. Using \eqref{defWi}, changing variables and integrating by parts we have
\[  \bal (2^*-1) \int_{B_\ell \backslash B_{\ell+1}} W_\ell^{2^*-2}W_{i} Z_{\ell,j} & =(2^*-1) \int_{B_0\big( \frac{\mu_{\ell-1}}{\mu_\ell} \big)} U_0^{2^*-2} V_j \hat{W}_i dy + o(\Theta_{\ell,j})\\
& =  n \int_{B_0\big( \frac{\mu_{\ell-1}}{\mu_\ell} \big)} U_0^{2^*-1} \partial_j \hat{W}_i dy + o(\Theta_{\ell,j}).  \eal \]
 If $i \le \ell-2$ we have, for any $y \in B_0(\frac{\mu_\ell}{\mu_{\ell-1}})$,
 \[ \big| \partial_j \hat{W}_i(y) \big| \lesssim \left(\frac{\mu_\ell}{\mu_{i}} \right)^{\frac{n}{2}}, \]
 so that 
 \[  n \int_{B_0\big( \frac{\mu_{\ell-1}}{\mu_\ell} \big)} U_0^{2^*-1} \partial_j \hat{W}_i dy = o( \Theta_{\ell,j}). \]
 If $i = \ell-1$ we write that 
 \[  \big| \partial_j \hat{W}_{\ell-1}(y) - \partial_j \hat{W}_{\ell-1}(0)\big| \lesssim \left(\frac{\mu_\ell}{\mu_{\ell-1}} \right)^{\frac{n+2}{2}}|y|, \]
 so that 
 \[ n \int_{B_0\big( \frac{\mu_{\ell-1}}{\mu_\ell} \big)} U_0^{2^*-1} \partial_j \hat{W}_{\ell-1} dy = n D_3 \partial_j \hat{W}_{\ell-1}(0) + o( \Theta_{\ell,j}), \]
 where we have let 
 \ben \label{defD4} 
 D_3= \int_{\RR^n} U_0^{2^*-1} dy.
 \een It remains to notice that, by definition of $\xi_\ell$ in \eqref{defxi}, we have 
 \[ \partial_j \hat{W}_{\ell-1}(0) = \left( \frac{\mu_\ell}{\mu_{\ell-1}}\right)^{\frac{n}{2}}\big( \partial_j U_0(z_{\ell}) + o(1) \big), \]
 where $U_0$ is as in \eqref{defB0}. This concludes the proof of Lemma \ref{finalclaim} when $\ell \ge 2$. 
 
 \medskip
 
 It remains to prove the $\ell=1$ case. We have, with \eqref{rapportsbulles} and the convention that $W_{k+1} = 0$ and $W_{0} = 0$, that
 \[\bal  \Bigg|  -\int_{M} &\Big[ \big( \sum_{i=1}^k W_i\big)^{2^*-1} - \sum_{i=1}^k W_i^{2^*-1} \Big] Z_{1,j} dv_g \Bigg| \\
 &  \lesssim \sum_{i \ge 1} \int_{B_i \backslash B_{i+1}} W_i^{2^*-2} \big( W_{i-1} + W_{i+1} \big) |Z_{1,j}| dv_g \\
 & \lesssim  \sum_{i \ge 2} \Bigg( \frac{\mu_i^{\frac{n}{2}}}{\mu_{i-1}\mu_{1}^\pui} + \left( \frac{\mu_{i+1}}{\mu_1}\right)^{\pui}\Bigg) + \left( \frac{\mu_2}{\mu_1}\right)^{\pui} \\
 & = o (\Theta_{1,j}) \eal\]
 for any $0 \le j \le n$, by \eqref{coolfact}. This concludes the proof of Lemma \ref{finalclaim}.
\end{proof}

We are now in position to conclude the proof of Propositions \ref{propnu1} and \ref{propnuge2}. By Lemma \ref{finalclaim} and \eqref{stepX} we only need to estimate the first term in the right-hand side of \eqref{stepX}. We first treat the $j=0$ case. By definition of $W_{\ell}$ in \eqref{defWi} and $Z_{\ell,0}$ in \eqref{defZi} it is easily seen that there holds
$$ \bal
 \frac{2}{n-2} \mu_{\ell} \frac{\partial} {\partial \mu_{\ell}}W_{\ell}   & = (n-2) \omega_{n-1} G_{g_{\xi}}(\xi, x) d_{g_{\xi}}(\xi, x)^{n-2} Z_{\ell,0} + O(\mu_\ell^{\pui})
 \eal $$
By \eqref{defmass} and \eqref{nonlcf} this gives, since $(M,g)$ is locally conformally flat:
$$
Z_{\ell,0} = 
\Big( 1 + O\big(d_{g_{\xi_\ell}}(\xi_\ell, x)^{n-2}\big)\Big) 
\frac{2}{n-2} \mu_{\ell} \frac{\partial} {\partial \mu_{\ell}}W_{\ell}  + O(\mu_\ell^{\pui}) .
$$
The latter expression then gives 
\ben \label{DL11a}
\bal
& \int_M  \Bigg[  \Big(\triangle_g + c_n S_g + \ve h\Big)W_\ell - W_\ell^{2^*-1} \Bigg] Z_{\ell,0} dv_g \\
& = \frac{2}{n-2} \mu_\ell\frac{\partial}{\partial \mu_{\ell}} \Bigg[ \frac12 \int_M |\nabla W_{\ell}|^2 + (c_n S_g + \ve h) W_{\ell}^2 dv_g  -  \frac{1}{2^*} \int_M W_{\ell}^{2^*} dv_g \Bigg] \\
& + o (\Theta_{\ell,0}).
 \eal
 \een 
 For any $\ell\ge1$, The computations in Esposito-Pistoia-V\'etois \cite{EspositoPistoiaVetois} (Lemma $4.1$) show that
\ben \label{DL11b}
 \bal
&\frac12 \int_M |\nabla W_{\ell}|^2 + (c_n S_g + \ve h) W_{\ell}^2 dv_g -  \frac{1}{2^*} \int_M W_{\ell}^{2^*} dv_g \\
&= D_1 \ve h(\xi_{\ell})\mu_\ell^2 - 
D_2 \mu_\ell^{n-2} m(\xi_\ell)  + o(\ve \mu_\ell^2) 
\eal\een
for some positive constants $D_1$ and  $D_2$, where $m(\xi_\ell)$ is the mass as defined in \eqref{defmass}, and where the $o(\cdot)$ terms are $C^1$ with respect to $\mu_\ell$ and to $\xi_{\ell}$. Plugging \eqref{DL11b} into \eqref{DL11a}  yields
\ben \label{DL11c}
\bal
\int_M & \Bigg[  \Big(\triangle_g + c_n S_g + \ve h\Big)W_\ell - W_\ell^{2^*-1} \Bigg] Z_{\ell,0} dv_g \\
& = \frac{4D_1}{n-2} \ve h(\xi_{\ell})\mu_\ell^2 
  -  2 D_2 \mu_\ell^{n-2} m(\xi_\ell)+ o(\Theta_{\ell,0}) .
\eal
\een
Assume now that $1 \le j \le n$. Lemmas 6.2 and 6.3 in \cite{EspositoPistoiaVetois} similarly show that
\be
\bal 
&\int_{B_{\ell}} \Bigg[ \big(\triangle_g + c_n S_g + \ve h\big) W_{\ell} - W_{\ell}^{2^*-1} \Bigg] Z_{\ell,j} dv_g  \\
& =  n  \mu_\ell \frac{\partial}{\partial (\xi_\ell)_j} \Bigg[  \frac12 \int_M |\nabla W_{\ell}|^2 + (c_n S_g + \ve h) W_{\ell}^2 dv_g  -  \frac{1}{2^*} \int_M W_{\ell}^{2^*} dv_g\Bigg] + o(\Theta_{\ell,j}).
\eal \ee
With \eqref{DL11b} this gives:
\ben \label{DL11e}
\bal 
&\int_{B_{\ell}} \Bigg[ \big(\triangle_g + c_n S_g + \ve h\big) W_{\ell} - W_{\ell}^{2^*-1} \Bigg] Z_{\ell,j} dv_g  \\
& = nD_1 \ve \nabla_j h(\xi_{\ell}) \mu_\ell^3
- nD_2\mu_\ell^{n-1}\nabla_j m(\xi_\ell) + o(\Theta_{\ell,j}) \\
& = o(\Theta_{\ell,j}).
\eal \een
Coming back to \eqref{DL3} and using Lemma \ref{lemmaDL}, \eqref{stepX}, Lemma \ref{finalclaim}, \eqref{DL11c} and \eqref{DL11e} concludes the proof of Propositions \ref{propnu1} and \ref{propnuge2}.
\end{proof}

We now conclude the proof of Theorem \ref{maintheo}:

\begin{proof}[End of the proof of Theorem \ref{maintheo}.]
Recall that $(M,g)$ is a locally conformally flat manifold of dimension $n \ge7$. Define a function $F: S_k \to \RR^{k(1+n)}$ by:
$$ \bal
 F(t_1, \xi_1, \dots, t_k, z_k) = 
\left \{
\bal
&  \frac{4D_1}{n-2} h(\xi_1)  t_{1}^2 - 2 D_2 t_1^{n-2} m(\xi_1) \\
& \Big( n D_1 t_1^3 \nabla_j h(\xi_1) - n D_2 t_1^{n-1} \nabla_j m(\xi_1) \Big)_{1 \le j \le n} \\
& \left(
\bal
& \frac{4D_1}{n-2} h(\xi_1) t_{\ell}^2 - D_3 \left( \frac{t_{\ell}}{t_{\ell-1}} \right)^{\pui} U_0(z_\ell) \\
&  -  n D_3 \left( \frac{t_{\ell}}{t_{\ell-1}} \right)^{\frac{n}{2}} \partial_j U_0(z_\ell) \\
\eal \right)_{2 \le \ell \le k, 1 \le j \le n}
\eal \right \},
\eal
$$
where $S_k$ is as in \eqref{defA}. Assume that $(M,g)$ is not conformally diffeomorphic to the standard sphere and let $\xi_0$ be a critical point of the mass function $\xi \mapsto m(\xi) >0$. Choose now $h \in C^1(M)$ such that $h(\xi_0) = 1$ and $\nabla h(\xi_0) = 0$ and let 
$$ t_{1,0} = \left( \frac{2D_1}{(n-2) D_2 m(\xi_0)} \right)^{\frac{1}{n-4}}. $$
Assume also that $h$ is chosen so that $\xi_0$ has non-zero degree as a zero of
$$ \xi \mapsto n D_1 t_{1,0}^3 \nabla h(\xi) - n D_2 t_{1,0}^{n-1} \nabla m(\xi).$$ 
These conditions are obviously met if $h \equiv 1$ and $\xi_0$ is a \emph{non-degenerate} critical point of the mass function $\xi \mapsto m(\xi)$. Let finally, for $ 2 \le \ell \le k$, 
$$ t_{\ell,0} = \left( \frac{4D_1}{(n-2) D_3} t_{\ell-1,0}^{\pui}\right)^{\frac{2}{n-6}} . $$
It is then easily seen that  $\big(t_{1,0}, \xi_0, (t_{\ell,0}, 0)_{2 \le \ell \le k} \big)$ is a zero of $F$ with non-zero degree. In particular, for any $\ve$ small enough, by Propositions \ref{propnu1} and \ref{propnuge2}, there exists $(t_{1,\ve}, \xi_{1,\ve}, (t_{\ell, \ve}, z_{\ell,\ve})_{2 \le \ell \le k}) \in S_k$ that annihilates the $\nu_{i,j}$ defined in \eqref{eqvpC0}. Denote by $\mu_{\ell,\ve}$ and $\xi_{\ell,\ve}$, $1 \le \ell \le k$, the parameters associated to $(t_{1,\ve}, \xi_{1,\ve}, (t_{\ell, \ve}, z_{\ell,\ve})_{2 \le \ell \le k}) \in S_k$ as in \eqref{defmu} and \eqref{defxi}. By \eqref{eqvpC0}, the function 
$$ \bal 
u_\ve & = \sum_{i =1}^k W_{i,\ve} + \vp_{\ve}  \\
& = \sum_{i=1}^k W_{\mu_{\ell}, \xi_{\ell}} + \vp_{\ve}\Big( (t_{1,\ve}, \xi_{1,\ve}, (t_{\ell, \ve}, z_{\ell,\ve})_{2 \le \ell \le k}\Big) 
\eal $$
solves \eqref{eqY}. By the pointwise estimates on $\vp_\ve$ given by Proposition \ref{propnonlin} it is easily seen that $u_\ve$ blows-up with $k$ towering bubbles around $\xi_0$. This concludes the proof of Theorem \ref{maintheo}.
\end{proof}

\section{Sign-changing bubble towers for the Br\'ezis-Nirenberg problem} \label{BrezisNirenberg}

We prove in this section Theorem \ref{theoBN}. Let us consider the Brezis-Nirenberg problem
\begin{equation}
\label{bn}
\triangle_{\xi} u -\varepsilon u =|u|^{\frac{4}{n-2}}u \ \hbox{in}\ \Omega,\ u=0\ \hbox{on}\ \partial\Omega,
\end{equation}
where $\Omega$ is a bounded open domain in $\mathbb R^n$, $n \ge 7$, $\triangle_\xi = - \sum_{i=1}^n \partial_i^2$ and $\varepsilon$ is a positive small parameter. For $\mu >0$ and $\xi \in \Omega$ we introduce the standard bubble
\begin{equation}
\label{bu_bn}U_{\mu,\xi}(x):=\mu^{-\pui} U_0\(\frac{x-\xi}{\mu}\),
 \end{equation}
where $U_0$ is given by \eqref{defB0}, and its projection $P_\Omega U_{\mu,\xi}$ on the space $H^1_0(\Omega)$, i.e.  the solution 
to the Dirichlet boundary problem
$$\triangle_\xi P_\Omega U_{\mu,\xi}=\triangle_\xi  U_{\mu,\xi}\ \hbox{in}\ \Omega,\ P_\Omega U_{\mu,\xi}=0\ \hbox{on}\ \partial\Omega.$$
It is well known that (see e.g. Proposition 1 in Rey \cite{r1}) that 
\begin{equation}
\label{pro_bn}
 P_\Omega U_{\mu,\xi}(x)=   U_{\mu,\xi}(x)-c_n\mu^{\pui}H(x,\xi)+ O\(\mu^{\frac{n+2}{2}}\)
\end{equation}
with $c_n:=\(n(n-2)\)^{\pui}\omega_{n-1}$, and that this expansion holds true uniformly with respect to $x\in\overline\Omega$ and $\xi$ in compact sets of $\Omega.$
Here $H(\cdot,\xi)$ is the regular part of the Green function $G$ of $\triangle_\xi$ in $\Omega$ with Dirichlet boundary condition, defined by
\ben \label{defRobin}
 G(x,y) = \frac{1}{(n-2)\omega_{n-1}} |x-y|^{2-n} - H(x,y) \quad \forall x \neq y \in \Omega. 
 \een
For an integer $k\ge1,$ we look for a sign-changing solution to \eqref{bn} having the following form: 
\begin{equation}\label{ans_bn}
u(x)=\sum\limits_{\ell=1}^k(-1)^\ell P_\Omega U_{\mu_\ell,\xi_\ell}(x)+ \vp_\ve(x), 
\end{equation}
where the concentration parameters satisfy
 \begin{equation}\label{par_bn}\mu_\ell=t_\ell\epsilon^{\gamma_\ell}\ \hbox{for}\ t_\ell>0\ \hbox{and}\ \gamma_\ell={n-2\over 2(n-4)}\({n-2\over n-6}\)^{\ell-1}-\frac12\end{equation}
and  the concentration points are given by
 \begin{equation}\label{point_bn}\xi_1 \in\Omega\ \hbox{and}\  \xi_\ell=\xi_{\ell-1}+\mu_{\ell-1}z_\ell \hbox{ with}\ z_\ell\in\mathbb R^n,\ \hbox{for any}\ 2\le\ell\le k \end{equation}
 All the arguments developed in the proof of Theorem \ref{maintheo} adapt straightforwardly in this setting with
 \begin{itemize}
 \item the bubbles $W_\ell$ replaced by the  bubbles $P_\Omega U_{\mu_\ell,\xi_\ell}$,
 \item the elements $Z_{\ell,j}$ in \eqref{defZi} replaced by 
 $$Z_{\Omega, \ell,j} := P_{\Omega} \Big(\mu_\ell^{-{n-2\over2}}V_j\(x-\xi_\ell \over\mu_\ell \) \Big) $$
where $V_j$ is defined in \eqref{defVi}, and $K_k$ in \eqref{defKi} is replaced by 
 $$K_{\Omega,k} = \textrm{Span} \Big \{ Z_{\Omega, \ell,j}, 1 \le \ell \le k, 0 \le j \le n \Big \}.$$ 
 \item  and the configuration space $S_k$ defined in \eqref{defA}  replaced by the set 
 \[ S_{\Omega,k} = \Big \{(\xi,z_2,\dots,z_k,t_1,\dots,t_k)\ :\    d(\xi,\partial\Omega)\ge d,\ |z_\ell|\le 1,\  A^{-1}\le t_\ell\le A \Big  \}\]
 for some fixed constants $d >0$ and $A>1$.
 \end{itemize}
 
 More precisely, let for $(\xi,z_2,\dots,z_k,t_1,\dots,t_k) \in S_{\Omega}$ and for $x \in \Omega$:
   \be
  \bal
\bullet\  \Psi_{\Omega,\ve}(x) & =  \mu_{\ell-1}^{1 - \frac{n}{2}} \left( \frac{\mu_{\ell}}{\mu_\ell + |x - \xi_\ell|} \right)^2 + \ve \mu_{\ell-1}^{3 - \frac{n}{2}}  \\
&+ \frac{(\mu_{\ell+1} + |\xi_{\ell+1} - x|)^2}{\mu_{\ell}^2} P_\Omega U_{\mu_{\ell+1}, \xi_{\ell+1}}(x)  \mathds{1}_{\{ |x - \xi_{\ell+1}| \le \mu_{\ell}\}}  \\
&  \textrm{ if } x\in B_\ell \backslash B_{\ell+1}, \quad \textrm{ for } \ell \ge 2, \\
\bullet\  \Psi_{\Omega,\ve}(x) & =   \frac{\mu_{1}^{\frac{n+2}{2}}}{(\mu_1 + |\xi_1 - x|)^2}   + \frac{(\mu_{2} + |\xi_{2} - x|)^2}{\mu_{1}^2} P_{\Omega} U_{\mu_2, \xi_2}(x) \mathds{1}_{\{ |\xi_2 - x| \le \mu_{1}\}} \\
 &\textrm{ if } x\in M \backslash B_{2},
\eal
\ee
where we have let $B_i = B(\xi_i, \sqrt{\mu_i \mu_{i-1}})$ for $i \ge 2$ and $B_1 = \Omega \backslash B_2$.  A careful inspection of the proofs of Proposition \ref{proplin} and Proposition \ref{propnonlin} shows that the analysis carries on here (since the points $\xi_1, \dots, \xi_\ell$ live in compact sets in the interior) and yields the following result:

\begin{prop}  \label{propvpBN}
Let $k \ge 1$ be an integer and let $f(u) = |u|^{2^*-2}u$ for $u \in \RR$. There exist $\ve_1 >0$ and $C >0$ such that the following holds. For any $\ve \in (0, \ve_1)$ and for any $ \big (\xi,z_2,\dots,z_k,t_1,\dots,t_k \big) \in S_{\Omega,k} $, there exists
\[ \vp_\ve:= \vp_\ve \Big(\xi,z_2,\dots,z_k,t_1,\dots,t_k \Big)  \in K_{\Omega,k}^{\perp}\]
which is the unique solution to
\be 
 \Pi_{K_{\Omega, k}^{\perp}}\Bigg[ \sum\limits_{\ell=1}^k(-1)^\ell P_\Omega U_{\mu_\ell,\xi_\ell}+ \vp_{\ve} - (\triangle_\xi - \ve )^{-1}\Bigg( f\Big(  \sum\limits_{\ell=1}^k(-1)^\ell P_\Omega U_{\mu_\ell,\xi_\ell}+ \vp_\ve\Big) \Bigg) \Bigg] =0
 \ee
that both belongs to $K_{\Omega,k}^{\perp}$ and satisfies $|\vp_\ve| \le C  \Psi_{\Omega,\ve}$. In addition, 
\[ \big( \xi,z_2,\dots,z_k,t_1,\dots,t_k \big) \in S_{\Omega,k} \mapsto \vp_\ve \in C^0(\overline{\Omega}) \]
is continuous for any fixed value of $\ve$.
\end{prop}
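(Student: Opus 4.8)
The plan is to transfer the whole machinery developed for the Riemannian case in Sections \ref{lin} and \ref{nonlin} to the Euclidean Brezis-Nirenberg setting, exploiting the fact that the concentration points $\xi_1, \dots, \xi_k$ all stay in a fixed compact subset of $\Omega$ (thanks to the constraint $d(\xi,\partial\Omega) \ge d$ built into $S_{\Omega,k}$). First I would record the analogue of the linear theory of Proposition \ref{proplinbase}: since the projected bubbles $P_\Omega U_{\mu_\ell,\xi_\ell}$ satisfy the same non-degeneracy and spectral properties as the Riemannian bubbles $W_\ell$, the standard finite-dimensional reduction (e.g. Rey \cite{r1}, Musso-Pistoia \cite{mupi}) gives, for $\ve$ small, a unique solution $\vp_\ve \in K_{\Omega,k}^\perp$ of the linear equation modulo the kernel $K_{\Omega,k}$, together with the $H^1$ bound $\Vert \vp_\ve\Vert_{H^1_0(\Omega)} \lesssim \Vert k_\ve\Vert_{L^{2n/(n+2)}}$. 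This is the Euclidean counterpart of equation \eqref{eqlinbase} and \eqref{estH1lin}.

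Next I would establish the Euclidean analogue of Proposition \ref{proplin}, the sharp pointwise estimate. The key observation is that all the local computations in the proof of Proposition \ref{proplin} are genuinely local near the concentration points, where $(M,g)$ being locally conformally flat is replaced here by the metric being \emph{exactly} Euclidean: so the "l.c.f." branch of every estimate applies verbatim, and in fact the situation is simpler because there is no curvature remainder. The only genuinely new ingredient is the boundary contribution. By \eqref{pro_bn} one has $P_\Omega U_{\mu_\ell,\xi_\ell} = U_{\mu_\ell,\xi_\ell} - c_n \mu_\ell^{(n-2)/2} H(\cdot,\xi_\ell) + O(\mu_\ell^{(n+2)/2})$, so the correction term is smooth up to $\partial\Omega$ and of size $O(\mu_\ell^{(n-2)/2})$; it enters exactly like the mass term $m(\xi)$ does in the Riemannian case (via the Robin function $H(\xi,\xi)$), and one replaces the Green's function $G_\ve$ of $\triangle_g + c_n S_g + \ve h$ by the Green's function of $\triangle_\xi - \ve$ with Dirichlet boundary condition, which still satisfies $G(x,y) \lesssim |x-y|^{2-n}$. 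The representation formula of Lemma \ref{lemmerep} is used unchanged (it lives on $\RR^n$), and the recursive estimate of Lemma \ref{lemmeinduction}, together with the final inductive control of the $a_i$ in Claims near \eqref{estal} and \eqref{claima1}, goes through with the weight $\Psi_{\Omega,\ve}$ in place of $\Psi_\ve$. One must check that the Dirichlet boundary condition on $\vp_\ve$ does not create difficulties in the representation formula over $\Omega$: this is fine precisely because $G$ is the Dirichlet Green's function, so the boundary term vanishes.

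Once the pointwise linear theory is in place, the proof of Proposition \ref{propvpBN} is a carbon copy of the proof of Proposition \ref{propnonlin}. I would define, for $C>0$ to be fixed, the Banach space
\[ E_\ve^C = \Big\{ \vp \in C^0(\overline\Omega)\ :\ \vp = 0 \textrm{ on } \partial\Omega,\ |\vp| \le C\Psi_{\Omega,\ve} \textrm{ in } \Omega \Big\}, \quad \Vert \vp\Vert_* = \Big\Vert \frac{\vp}{\Psi_{\Omega,\ve}}\Big\Vert_{L^\infty(\Omega)}, \]
and the map $T(\vp) = \mathcal L_\ve(-R_\ve + N(\vp))$ with $R_\ve = (\triangle_\xi - \ve)\big(\sum_\ell (-1)^\ell P_\Omega U_{\mu_\ell,\xi_\ell}\big) - f\big(\sum_\ell(-1)^\ell P_\Omega U_{\mu_\ell,\xi_\ell}\big)$ and $N$ the quadratic remainder of the nonlinearity. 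The two estimates to verify are that $R_\ve$ divided by a constant satisfies the Euclidean analogue of \eqref{hypf} — which follows from \eqref{pro_bn} exactly as \eqref{ferr} followed from \eqref{lapbulle} — and that $N(\vp)$ divided by $D_0 C^2(\mu_1^4 + \mu_2^2/\mu_1^2)$ also satisfies it, using $\Psi_{\Omega,\ve} \lesssim \sum_\ell P_\Omega U_{\mu_\ell,\xi_\ell}$ and $|N(\vp)| \lesssim (\sum_\ell P_\Omega U_{\mu_\ell,\xi_\ell})^{2^*-3}|\vp|^2$. Applying the Euclidean Proposition \ref{proplin} then gives $\Vert T(\vp)\Vert_* \le C_1(1 + C^2(\mu_1^4 + \mu_2^2/\mu_1^2))$; choosing $C = 2C_1$ makes $T$ stabilise $E_\ve^C$ for $\ve$ small, and a parallel estimate on $T(\vp_1) - T(\vp_2)$ makes $T$ a contraction with constant $O(\mu_1^4 + \mu_2^2/\mu_1^2) = o(1)$. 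Picard's fixed point theorem yields the unique $\vp_\ve$, its membership in $K_{\Omega,k}^\perp$, the bound $|\vp_\ve| \le C\Psi_{\Omega,\ve}$, and — via the uniform bound on the Lagrange multipliers from \eqref{vpl}, the continuity of $\xi \mapsto P_\Omega U_{\mu_\ell,\xi_\ell}$ in $C^0$, and standard elliptic regularity — the continuity of $\big(\xi, z_2,\dots,z_k,t_1,\dots,t_k\big) \mapsto \vp_\ve$ for fixed $\ve$.

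The main obstacle is not conceptual but bookkeeping: one has to re-examine every one of the many pointwise estimates in the proof of Proposition \ref{proplin} to confirm that the Dirichlet boundary correction $-c_n\mu_\ell^{(n-2)/2}H(\cdot,\xi_\ell)$ is harmless. The point is that $H(\cdot,\xi_\ell)$ is bounded (with bounded derivatives) on $\overline\Omega$ uniformly for $\xi_\ell$ in the compact set $\{d(\cdot,\partial\Omega)\ge d\}$, so this term contributes $O(\mu_\ell^{(n-2)/2})$ pointwise — of the same order as, and in fact dominated in the relevant regions by, the mass term already handled in the l.c.f. case. No new phenomenon appears; the locally conformally flat analysis of Section \ref{lin} was, by design, written so that it specialises directly. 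Hence I would simply state that the proofs of Propositions \ref{proplin} and \ref{propnonlin} carry over \emph{mutatis mutandis}, indicating the three places where the dictionary $W_\ell \leftrightarrow P_\Omega U_{\mu_\ell,\xi_\ell}$, $Z_{\ell,j} \leftrightarrow Z_{\Omega,\ell,j}$, $m(\xi) \leftrightarrow H(\xi,\xi)$, $G_\ve \leftrightarrow G$ is used, and conclude.
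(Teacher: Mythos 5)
Your proposal is correct and follows essentially the same route as the paper: the paper itself only states that "a careful inspection of the proofs of Proposition \ref{proplin} and Proposition \ref{propnonlin} shows that the analysis carries on here" with the dictionary $W_\ell \leftrightarrow P_\Omega U_{\mu_\ell,\xi_\ell}$, $Z_{\ell,j} \leftrightarrow Z_{\Omega,\ell,j}$, $K_k \leftrightarrow K_{\Omega,k}$, $\Psi_\ve \leftrightarrow \Psi_{\Omega,\ve}$, and your proposal is a faithful, correctly argued elaboration of exactly that inspection (Euclidean metric specialising the l.c.f.\ branch, \eqref{pro_bn} supplying the Robin-function correction in place of the mass term, Dirichlet Green's function replacing $G_\ve$, and the fixed-point scheme of Proposition \ref{propnonlin} run in the weighted space built from $\Psi_{\Omega,\ve}$).
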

As in Section \ref{expansionkernel}, this Proposition asserts the existence of $(\nu_{\ell,j})$, $1 \le \ell \le k$ and $0 \le j \le n$ such that 
\ben \label{eqbn}
\bal
 \big( \triangle_\xi- \ve )\Big(  \sum\limits_{\ell=1}^k(-1)^\ell P_\Omega U_{\mu_\ell,\xi_\ell}+ \vp_{\ve} \Big)&  - f \Bigg(   \sum\limits_{\ell=1}^k(-1)^\ell P_\Omega U_{\mu_\ell,\xi_\ell}+ \vp_{\ve} \Bigg) \\
 & = \sum \limits_{\underset{j =0 .. n }{\ell=1..k}}   \nu_{\ell,j} ( \triangle_\xi - \ve )Z_{\Omega, \ell,j}
 \eal 
 \een
 in $\Omega$. Again, the analysis carried on in Section \ref{expansionkernel} remains true here. By \eqref{defmass} and \eqref{defRobin} the analogue of the mass $m(\xi_1)$ is the opposite of Robin's function $-H(\xi,\xi)$. Taking into account that two consecutive bubbles have different sign so that their interaction is {\em negative} instead of {\em positive}, Proposition \ref{propnu1} and Proposition  \ref{propnu1} can be rephrased as follows:
 \begin{prop}  \label{propnu_bn}${}$\\
\begin{itemize}
\item{(i)} If $\ell=1$
then:
$$\bal \Vert \nabla V_0 \Vert^2_{L^2(\RR^n)} & \nu_{1,0}  \big( \xi,z_2,\dots,z_k,t_1,\dots,t_k \big) \\
& = \ve^{1 + 2 \gamma_1} \Big(-\frac{4D_1}{n-2} t_{1}^2 +2D_2 t_1^{n-2} H(\xi,\xi) + \Lambda_{0,1}\Big) \eal $$
and,  for $1 \le j \le n$,
$$  \Vert \nabla V_j \Vert^2_{L^2(\RR^n)}  \nu_{1,j}   \big( \xi,z_2,\dots,z_k,t_1,\dots,t_k \big) = \ve^{1 + 3 \gamma_1} \Big( n D_2 t_1^{n-1}\partial_j H(\xi,\xi) + \Lambda_{j,1} \Big) $$
as $\ve \to 0$.  
\item{(ii)} If  $2 \le \ell \le k$
$$\bal
\Vert \nabla V_0 \Vert^2_{L^2(\RR^n)} &\nu_{\ell,0} \big( \xi,z_2,\dots,z_k,t_1,\dots,t_k \big)  \\
& = \ve^{1 + 2 \gamma_{\ell}} \Big( -\frac{4D_1}{n-2} t_{\ell}^2 +D_3 \left( \frac{t_{\ell}}{t_{\ell-1}} \right)^{\pui}U_0(z_\ell) + \Lambda_{0,\ell} \Big)
\eal 
$$
and, for $1 \le j \le n$,
$$ \bal \Vert \nabla V_j \Vert^2_{L^2(\RR^n)} &  \nu_{\ell,j}  \big( \xi,z_2,\dots,z_k,t_1,\dots,t_k \big) \\
& = \ve^{ \frac{n}{2}(\gamma_{\ell} - \gamma_{\ell-1})} \Bigg( nD_3 \left( \frac{t_{\ell}}{t_{\ell-1}} \right)^{\frac{n}{2}} \partial_j U_0(z_\ell) + \Lambda_{j,\ell}\Bigg)  \eal $$
as $\ve \to 0$.
\end{itemize}
The functions $U_0$ is defined in \eqref{defB0}, the positive constants $D_i$ are the same as in Propositions \ref{propnu1} and \ref{propnuge2}  and the $\Lambda_{j,\ell}$'s  are continuous functions on $S_{\Omega,k}$ which  converge uniformly to $0$ as $\ve \to 0$.  
\end{prop}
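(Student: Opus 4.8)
The plan is to mirror, line by line, the derivation of Propositions \ref{propnu1} and \ref{propnuge2} from Section \ref{expansionkernel}, replacing the Riemannian objects by their Euclidean counterparts listed in the bulleted dictionary above. First I would recall that by Proposition \ref{propvpBN} the remainder $\vp_\ve$ satisfies the sharp pointwise bound $|\vp_\ve|\lesssim\Psi_{\Omega,\ve}$, so that Proposition \ref{proplin} applies to equation \eqref{eqbn} and yields $\sum_{j=0}^n|\nu_{\ell,j}|\lesssim \ve\mu_\ell^2$ for all $1\le\ell\le k$. Integrating \eqref{eqbn} against $Z_{\Omega,\ell,q}$ and using the orthogonality relations \eqref{psZi}--\eqref{psZik} (whose Euclidean analogues hold verbatim for $P_\Omega U_{\mu_\ell,\xi_\ell}$ and $Z_{\Omega,\ell,j}$), the quantity $\Vert\nabla V_q\Vert_{L^2(\RR^n)}^2\nu_{\ell,q}$ is reduced, up to $o(\Theta_{\ell,q})$ with $\Theta_{\ell,q}$ as in \eqref{defTHETA10}--\eqref{defTHETAell}, to the integral of the leading part $I_1$ of the decomposition \eqref{DL3} against $Z_{\Omega,\ell,q}$. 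The bounds on $I_2,I_3,I_4$ from Lemma \ref{lemmaDL} carry over unchanged, since they only use \eqref{estoptvp}, the pointwise sizes of the bubbles and of the $Z$'s, and \eqref{growthmuell}, all of which are insensitive to the manifold/domain distinction.

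Next I would isolate, exactly as in \eqref{stepX} and Lemma \ref{finalclaim}, the two genuine contributions: the self-interaction term $\int(\triangle_\xi-\ve)P_\Omega U_{\mu_\ell,\xi_\ell}-|P_\Omega U_{\mu_\ell,\xi_\ell}|^{2^*-2}P_\Omega U_{\mu_\ell,\xi_\ell})Z_{\Omega,\ell,j}$ and the bubble--bubble interaction term $-\int\big((\sum_\ell (-1)^\ell P_\Omega U_{\mu_\ell,\xi_\ell})^{2^*-1}-\sum_\ell((-1)^\ell P_\Omega U_{\mu_\ell,\xi_\ell})^{2^*-1}\big)Z_{\Omega,\ell,j}$. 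For the self-interaction term I would invoke the classical Rey-type expansion \eqref{pro_bn}: writing $\frac{2}{n-2}\mu_\ell\partial_{\mu_\ell}P_\Omega U_{\mu_\ell,\xi_\ell}$ and $-n\mu_\ell\partial_{(\xi_\ell)_j}P_\Omega U_{\mu_\ell,\xi_\ell}$ in terms of $Z_{\Omega,\ell,0}$ and $Z_{\Omega,\ell,j}$, the energy $\frac12\int|\nabla P_\Omega U_{\mu_\ell,\xi_\ell}|^2-\ve P_\Omega U_{\mu_\ell,\xi_\ell}^2-\frac{1}{2^*}\int|P_\Omega U_{\mu_\ell,\xi_\ell}|^{2^*}$ expands as $D_1\ve\mu_\ell^2 - D_2\mu_\ell^{n-2}H(\xi_\ell,\xi_\ell)+o(\ve\mu_\ell^2)$ (the Euclidean analogue of \eqref{DL11b}, in which the mass $m(\xi_\ell)$ is replaced by $-H(\xi_\ell,\xi_\ell)$ because of the sign convention in \eqref{defRobin}), and differentiating in $\mu_\ell$ and in $(\xi_\ell)_j$ produces the stated main terms. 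For the interaction term, the computation of Lemma \ref{finalclaim} goes through literally, except that since two consecutive bubbles $(-1)^{\ell-1}P_\Omega U_{\mu_{\ell-1},\xi_{\ell-1}}$ and $(-1)^\ell P_\Omega U_{\mu_\ell,\xi_\ell}$ have opposite signs, the leading interaction picks up an overall minus sign; this is exactly why $U_0(z_\ell)$ and $\partial_j U_0(z_\ell)$ enter with a $+$ in Proposition \ref{propnu_bn} whereas they came with a $-$ in Proposition \ref{propnuge2}. Collecting the self-interaction and interaction contributions, and noting that $\ve^{1+2\gamma_1}=\ve^{\frac{n-2}{n-4}}$ for $\ell=1$ and $\ve^{\frac{n}{2}(\gamma_\ell-\gamma_{\ell-1})}\sim(\mu_\ell/\mu_{\ell-1})^{n/2}$ by \eqref{growthmuell}, yields the announced asymptotics.

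The only genuinely new ingredient relative to Section \ref{expansionkernel} is that $\vp_\ve$ and the $P_\Omega U_{\mu_\ell,\xi_\ell}$ do not vanish on $\partial\Omega$ in a way that would produce boundary terms when one integrates by parts against $Z_{\Omega,\ell,j}$; but since $P_\Omega U_{\mu_\ell,\xi_\ell}$, $Z_{\Omega,\ell,j}$ and $\vp_\ve$ all lie in $H^1_0(\Omega)$ by construction (they are exact projections onto $H^1_0(\Omega)$), every integration by parts used in Lemma \ref{lemmaDL}, in \eqref{DL11a}, and in Lemma \ref{finalclaim} is boundary-term-free, so no modification is needed. The constants $D_1,D_2,D_3$ are literally the same as in Propositions \ref{propnu1}, \ref{propnuge2}: $D_1,D_2$ arise from the universal integrals appearing in the Euclidean version of \eqref{DL11b} and $D_3=\int_{\RR^n}U_0^{2^*-1}\,dy$ as in \eqref{defD4}. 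The expected main obstacle is purely bookkeeping rather than conceptual: one must check, error term by error term, that all the estimates of Lemma \ref{lemmaDL} and of \eqref{stepX1}--\eqref{stepX2} retain the same orders of magnitude when $d_g(\xi_i,x)$ is replaced by $|x-\xi_i|$, $W_i$ by $P_\Omega U_{\mu_i,\xi_i}$, $m$ by $-H$, and the conformal corrections $\Phi,\Lambda$ disappear; since $\Omega$ is a fixed smooth bounded domain and the $\xi_i$ stay at distance $\ge d$ from $\partial\Omega$, uniformity of all these bounds in $\big(\xi,z_2,\dots,z_k,t_1,\dots,t_k\big)\in S_{\Omega,k}$ is automatic, and the proof concludes exactly as that of Propositions \ref{propnu1} and \ref{propnuge2}.
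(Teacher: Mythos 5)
Your overall strategy is exactly the paper's: the paper disposes of Proposition~\ref{propnu_bn} in a single paragraph by observing that the analysis of Section~\ref{expansionkernel} applies verbatim once one replaces the mass $m(\xi)$ by $-H(\xi,\xi)$ (by comparing \eqref{defmass} with \eqref{defRobin}) and flips the sign of the leading bubble--bubble interaction because consecutive bubbles have opposite sign. Your write-up correctly identifies both of these mechanisms and correctly explains why the interaction term enters with a $+$ in the present statement versus a $-$ in Proposition~\ref{propnuge2}.

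There is, however, a sign error in the intermediate energy expansion you state. You write that the Euclidean analogue of \eqref{DL11b} is $D_1\ve\mu_\ell^2 - D_2\mu_\ell^{n-2}H(\xi_\ell,\xi_\ell)+o(\ve\mu_\ell^2)$, but this is inconsistent both with the substitution you announce and with the conclusion. Two sign flips occur relative to \eqref{DL11b}: the quadratic term flips because the Br\'ezis--Nirenberg operator is $\triangle_\xi-\ve$ rather than $\triangle_g+c_nS_g+\ve h$ (effectively $h\equiv -1$), giving $-D_1\ve\mu_\ell^2$; and the mass term flips because $m\to -H$, giving $-D_2\mu_\ell^{n-2}\cdot(-H)=+D_2\mu_\ell^{n-2}H$. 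The correct expansion is therefore $-D_1\ve\mu_\ell^2 + D_2\mu_\ell^{n-2}H(\xi_\ell,\xi_\ell)+o(\ve\mu_\ell^2)$. With the formula as you wrote it, differentiating via $\tfrac{2}{n-2}\mu_\ell\partial_{\mu_\ell}$ would yield $+\tfrac{4D_1}{n-2}t_\ell^2 - 2D_2 t_\ell^{n-2}H(\xi,\xi)$, the opposite of what Proposition~\ref{propnu_bn} asserts; so the sentence ``differentiating \dots produces the stated main terms'' does not follow from what precedes it. Since you clearly understand the mechanism (you explicitly cite the $m\to -H$ convention), this is a slip in transcription rather than a conceptual gap, but the intermediate formula should be corrected so that the chain of implications actually closes. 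The rest of the argument --- the applicability of Proposition~\ref{proplin} to \eqref{eqbn}, the reduction to the $I_1$ term, the treatment of the interaction as in Lemma~\ref{finalclaim} with a global sign flip, and the remark that all integrations by parts are boundary-term-free in $H^1_0(\Omega)$ --- is correct and matches the paper.
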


\begin{proof}[Proof of Theorem \ref{theoBN}]
With Proposition \ref{propnu_bn} the end of the proof of Theorem \ref{theoBN} follows the same lines than the one of Theorem \ref{maintheo}. Let $\xi_0$ be a non-degenerate interior critical point of the Robin's function $\xi \mapsto H(\xi, \xi)$. Let 
$$ t_{1,0} = \left(  \frac{2D_1}{(n-2)D_2H(\xi_0, \xi_0)} \right)^{\frac{1}{n-4}} $$
and, for $\ell \ge2$,
$$ t_{\ell,0} = \left(\frac{4D_1}{(n-2)D_3} t_{\ell-1,0}^{\pui} \right)^{\frac{2}{n-6}} . $$
It is then easily checked that 
$$ \Big( t_{1,0}, \xi_0, (t_{\ell,0}, 0)_{2 \le \ell \le k} \Big)$$
is a zero of the system 
$$\left\{\begin{aligned}  &-\frac{4D_1}{n-2} t_{1}^2 +2D_2 t_1^{n-2} H(\xi,\xi)=0\\
 & n D_2t_1^{n-1}\partial_j H(\xi,\xi)=0\\
&\left( \bal & - \frac{4D_1}{n-2} t_{\ell}^2 +D_3 \left( \frac{t_{\ell}}{t_{\ell-1}} \right)^{\pui}U_0(z_\ell) \\
&  nD_3 \left( \frac{t_{\ell}}{t_{\ell-1}} \right)^{\frac{n}{2}} \partial_j U_0(z_\ell)\\
\eal \right)_{2 \le \ell \le k}= 0 
\end{aligned}\right.$$
with non-zero degree. It is then stable under $C^0-$perturbations. For any $\ve >0$ this provides us with $ \big( \xi_{\ve},z_{2,\ve},\dots,z_{k,\ve},t_{1,\ve},\dots,t_{k,\ve} \big)$ such that
\[ \nu_{\ell,j}  \big( \xi_{\ve},z_{2,\ve},\dots,z_{k,\ve},t_{1,\ve},\dots,t_{k,\ve} \big) = 0 \]
for all $1 \le \ell \le k$ and $0 \le j \le n$. By \eqref{eqbn}, the corresponding 
\[\vp_\ve = \vp_\ve  \big( \xi_{\ve},z_{2,\ve},\dots,z_{k,\ve},t_{1,\ve},\dots,t_{k,\ve} \big)\] provides us with a solution of \eqref{bn} and concludes the proof of Theorem \ref{theoBN}.
\end{proof}

\medskip

The Robin's function $\xi \mapsto H(\xi, \xi)$ always admits an interior critical point since $H$ is positive and $H(\xi, \xi) \to + \infty$ as $\xi \to \partial \Omega$. And for generic domains $\Omega$, all the critical points of the Robin's function are non-degenerate as proved in Micheletti-Pistoia \cite{mipi2}. Finally, here again, if one considers more general equations of the form
\begin{equation*}
\triangle_\xi u - \ve h u =|u|^{4\over n-2}u \ \hbox{in}\ \Omega,\ u=0\ \hbox{on}\ \partial\Omega
\end{equation*}
for $h \in C^0(\overline{\Omega})$, an analogue of Theorem \ref{theoBN} can be proven where towering phenomena occur at \emph{any} critical point of the Robin's function $\xi \mapsto H(\xi, \xi)$ (provided $h$ is suitably chosen). Proposition \ref{propvpBN} remains indeed true and the details are identical to those developed at the very end of Section \ref{expansionkernel}.

 \appendix

\section{Technical Results}

We keep the notations of Section \ref{notations}. The integer $k\ge 1$ refers to the number of bubbles.

\subsection{Proof of \eqref{psZi} and \eqref{psZik}.} Let $1 \le i < j \le k$ and $0 \le p,q \le n$. We first prove \eqref{psZi}. We write for this, using \eqref{psH1}, \eqref{lapZ0} and \eqref{lapZi} and since $|Z_{i,p}| \lesssim W_i$, that
\[ \bal
\langle Z_{i,p}, Z_{i,q}\rangle & = \int_M \big( \triangle_g + c_n S_g + \ve h) Z_{i,p} Z_{i,q} dv_g \\
& = (2^*-1) \int_M W_i^{2^*-2} Z_{i,p} Z_{i,q} dv_g + O(\int_M W_i^2 dv_g) \\
& = (2^*-1) \int_M W_i^{2^*-2} Z_{i,p} Z_{i,q} dv_g + O(\mu_i^2).
\eal \]
Letting $y = \exp_{\xi_i}^{g_{\xi_i}}(\mu_i x)$ and using the conformal invariance of the conformal laplacian and \eqref{defW} we get:
\[\bal   \int_M W_i^{2^*-2} Z_{i,p} Z_{i,q} dv_g & = \int_{B_{g_{\xi_i}}(\xi_i, r_0)}  W_i^{2^*-2} Z_{i,p} Z_{i,q} dv_g + O(\mu_i^n)  \\
& = \int_{B(0, \frac{r_0}{\mu_i})} U_0^{2^*-2} V_p V_q dx + O(\mu_i^4) \\
& = \int_{\RR^n}  U_0^{2^*-2} V_p V_q dx + O(\mu_i^4) \\
& = \Vert \nabla V_p \Vert_{L^2(\RR^n)}^2 \delta_{pq}+ O(\mu_i^4),
\eal \]
where $r_0$ is as in \eqref{defr0}, which proves \eqref{psZi}.

\medskip

We now prove  \eqref{psZik}. With \eqref{lapZ0} and \eqref{lapZi} we have:
$$ \begin{aligned}
\langle & Z_{i,p}, Z_{j,q} \rangle \\
& = \int_M \Big[ \triangle_gZ_{j,q} + (c_n S_g + \ve h) Z_{j,q} \Big] Z_{i,p} dv_g \\
& = (2^*-1) \int_M W_j^{2^*-2} Z_{j,q}Z_{i,p} dv_g+\int_M \Big( \ve Z_{j,q}Z_{i,p}  + O \big(\mu_j W_j |Z_{i,q}|\big) \Big) dv_g     \\
& = O \left( \left( \frac{\mu_j}{\mu_i} \right)^{\pui} \right) ,
\end{aligned} $$
where we just used the rough estimate $|Z_{i,p}| \lesssim \mu_i^{1 - \frac{n}{2}}$.

\subsection{Additional material.} We prove two inequalities that were used several times throughout the proof. 

\begin{lemme}
Let $i \in \{1, \dots, k\}$, and $x \in M$. We have:
\ben \label{tech1}
\bal
& \textrm{ If } 0 \le p < n-4: \\
& \int_{B_i\backslash B_{i+1}} d_g(x, \cdot)^{2-n} W_i^{2^*-2} \left(\frac{\mu_i}{\theta_i} \right)^p dv_g \\
&\quad \quad \quad  \lesssim \left \{ \bal & \left( \frac{\mu_i}{\theta_i(x)} \right)^{p+2} & \textrm{ if } x \in B_i \\ & \mu_i^{\frac{p+2}{2}} \mu_{i-1}^{\frac{n-4-p}{2}} W_i (x) &  \textrm{ if } x \in M \backslash B_i \eal \right. \\
& \textrm{ If } p > n-4: \\
& \int_{B_i\backslash B_{i+1}} d_g(x, \cdot)^{2-n} W_i^{2^*-2} \left(\frac{\mu_i}{\theta_i} \right)^p dv_g \lesssim \left( \frac{\mu_i}{\theta_i(x)} \right)^{n-2} = \mu_i^\pui W_i(x) , \\
\eal \een
where $\theta_i $ is as in \eqref{defti}.
\end{lemme}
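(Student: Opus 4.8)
The plan is to reduce \eqref{tech1} to a model Euclidean convolution estimate by a change of conformal gauge followed by a rescaling. First I would use that $g$ and $g_{\xi_i}$ differ by the conformal factor $\Lambda_{\xi_i}$, which is uniformly bounded above and below on $M\times M$: hence $d_g\asymp d_{g_{\xi_i}}$ and $dv_g\asymp dv_{g_{\xi_i}}$, so it suffices to bound the integral with $g$ replaced by $g_{\xi_i}$ throughout. Writing $\theta_i(y)=\mu_i+d_{g_{\xi_i}}(\xi_i,y)$ and using the standard bound $W_i^{2^*-2}(y)\lesssim \mu_i^2\theta_i(y)^{-4}$ (which follows from \eqref{defW}, together with $\mu_i\le\theta_i$), the integrand on $B_i$ is controlled by $d_{g_{\xi_i}}(x,y)^{2-n}\mu_i^{2+p}\theta_i(y)^{-4-p}$; since this is nonnegative I may enlarge the domain to all of $B_i$. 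For $i\ge2$ this is the ball $B_{g_{\xi_i}}(\xi_i,\sqrt{\mu_i\mu_{i-1}})\subset B_{g_{\xi_i}}(\xi_i,r_0)$ (for $\ve$ small). When $i=1$ I would treat separately the region $M\setminus B_{g_{\xi_1}}(\xi_1,2r_0)$, where $W_1\asymp\mu_1^{(n-2)/2}$ and $\theta_1\asymp1$ make the corresponding contribution trivially $O(\mu_1^{2+p})$, which is $\lesssim(\mu_1/\theta_1(x))^{p+2}$ since $\theta_1$ is bounded above on $M$.

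Next I would rescale by $\mu_i$: set $y=\exp^{g_{\xi_i}}_{\xi_i}(\mu_i z)$ and, when $x$ lies in the chart $B_{g_{\xi_i}}(\xi_i,2r_0)$, $x=\exp^{g_{\xi_i}}_{\xi_i}(\mu_i w)$. In conformal normal coordinates $dv_{g_{\xi_i}}(y)=\mu_i^n(1+o(1))\,dz$, $\theta_i(y)=\mu_i(1+|z|)$ and $d_{g_{\xi_i}}(x,y)\asymp\mu_i|w-z|$; a count of the powers of $\mu_i$ then shows the whole integral is comparable to the dimensionless quantity $J(w):=\int_{B(0,R)}|w-z|^{2-n}(1+|z|)^{-4-p}\,dz$, with $R:=\sqrt{\mu_{i-1}/\mu_i}\to\infty$ (and $R:=2r_0/\mu_1$ when $i=1$). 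Via $\theta_i(x)=\mu_i(1+|w|)$ and $\mu_i^{(n-2)/2}W_i(x)\asymp(1+|w|)^{2-n}$, the claimed right-hand sides translate into: for $p<n-4$, $J(w)\lesssim(1+|w|)^{-(p+2)}$ when $|w|\le R$ and $J(w)\lesssim|w|^{2-n}R^{n-4-p}$ when $|w|\ge R$; and for $p>n-4$, $J(w)\lesssim(1+|w|)^{2-n}$ uniformly. The case $x\notin B_{g_{\xi_i}}(\xi_i,2r_0)$ is disposed of directly, since then $d_{g_{\xi_i}}(x,y)\gtrsim r_0\gtrsim d_{g_{\xi_i}}(\xi_i,y)$ for $y\in B_i$, so $d_{g_{\xi_i}}(x,y)\asymp d_{g_{\xi_i}}(\xi_i,x)\asymp\theta_i(x)$ and $\int_{B_i}\mu_i^{2+p}\theta_i(y)^{-4-p}\,dv_{g_{\xi_i}}$ is an elementary radial integral.

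Finally I would establish the bounds on $J(w)$ by the usual splitting of $B(0,R)$ into $\{|z-w|\le(1+|w|)/2\}$, $\{|z|\le(1+|w|)/2\}$, and the remainder (on which $|z-w|\asymp 1+|z|\asymp\max(|z|,|w|)$), estimating each piece as an elementary radial integral in the spirit of Giraud's lemma. This is where the dichotomy $p\lessgtr n-4$ enters: $\int^{R}\rho^{n-5-p}\,d\rho$ is governed by its upper endpoint $R$ when $p<n-4$ — producing the factor $R^{n-4-p}$, that is the $\mu_{i-1}$-dependence — and converges when $p>n-4$. The point requiring the most care is precisely this boundary-dominated regime $p<n-4$: one must check that the contribution of the outer shell $\{|z|\asymp R\}$ of $B(0,R)$, which carries most of the mass of $(1+|z|)^{-4-p}$, is still dominated by the stated right-hand side, and that the two branches $|w|\le R$ and $|w|\ge R$ match continuously at $|w|\asymp R$ (equivalently, at $x\in\partial B_i$) — which they do, both reducing to $(\mu_i/\mu_{i-1})^{(p+2)/2}$. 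The borderline exponent $p=n-4$ is excluded from the statement (it would produce a logarithm) and is not needed elsewhere.
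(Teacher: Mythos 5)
Your proof is correct and follows essentially the same route as the paper's: reduce to the metric $g_{\xi_i}$, use $W_i^{2^*-2}\lesssim\mu_i^2\theta_i^{-4}$, rescale by $\mu_i$, and bound the resulting Euclidean convolution by Giraud-type estimates, with the dichotomy $p\lessgtr n-4$ governing whether the mass of $(1+|z|)^{-4-p}$ is concentrated near the origin or near the boundary of the rescaled ball $B(0,R)$. The only difference is cosmetic: the paper case-splits on the position of $x$ first (inside/outside $B_i$) and treats each regime separately -- pulling the Green's factor out of the integral when $x\notin B_i$, and rescaling only in the $p>n-4$ case -- whereas you rescale uniformly once to the dimensionless $J(w)$ and then analyze it by the standard splitting; the two bookkeepings lead to the same intermediate estimates, including the matching of the two branches at $|w|\asymp R$, i.e.\ $x\in\partial B_i$, which the paper also records explicitly.
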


\begin{proof}
Assume first that $d_{g_{\xi_i}}(\xi_i, x) \le 2 \sqrt{\mu_i \mu_{i-1}}$. Then $W_i^{2^*-2} \lesssim \mu_i^2 \theta_i(x)^{-4}$, where $\theta_i$ is as in \eqref{defti}. If $0 \le p < n-4$, the result follows from Giraud's lemma. While if $p > n-4$ letting $y = \exp_{\xi_i}^{g_{\xi_i}}(\mu_i z)$ in the integral  and $\check{x} = \frac{1}{\mu_i} {\exp_{\xi_i}^{g_{\xi_i}}}^{-1}(x)$ yields :
\[ \bal \int_{B_i\backslash B_{i+1}} d_g(x, \cdot)^{2-n} W_i^{2^*-2} \left(\frac{\mu_i}{\theta_i} \right)^p dv_g &\lesssim \int_{\RR^n} |\check{x}-z|^{2-n} (1+|z|)^{-n-2} dz \\
& \lesssim \frac{1}{(1+|\check{x}|)^{n-2}} \lesssim   \left( \frac{\mu_i}{\theta_i(x)} \right)^{n-2}. \eal  \]
  Assume then that $d_{g_{\xi_i}}(\xi_i, x) \ge 2 \sqrt{\mu_{i-1}\mu_i}$ (note that this only makes sense for $ i\ge2$). Then for any $y \in B_i$ one has $d_g(x,y) \gtrsim d_g(x, \xi_i) \gtrsim \theta_i(x)$. If $p < n-4$ we then have
   \[ \bal \int_{B_i\backslash B_{i+1}} d_g(x, \cdot)^{2-n} W_i^{2^*-2} \left(\frac{\mu_i}{\theta_i} \right)^pdv_g & \lesssim \theta_i(x)^{2-n} \int_{B_i} W_i^{2^*-2} \left(\frac{\mu_i}{\theta_i} \right)^p\\
& = \theta_i(x)^{2-n} \mu_i^{p+2} (\mu_i \mu_{i-1})^{\frac{n-4-p}{2}} \\
& =  \mu_i^{\frac{p+2}{2}} \mu_{i-1}^{\frac{n-4-p}{2}} W_i (x), \eal \]
while if $p > n-4$ we have
   \[ \bal \int_{B_i\backslash B_{i+1}} d_g(x, \cdot)^{2-n} W_i^{2^*-2} \left(\frac{\mu_i}{\theta_i} \right)^pdv_g & \lesssim \theta_i(x)^{2-n} \int_{B_i} W_i^{2^*-2} \left(\frac{\mu_i}{\theta_i} \right)^p\\
& = \theta_i(x)^{2-n} \mu_i^{n-2} \int_{\RR^n} (1+|y|)^{-p-4} dy \\
& \lesssim \left( \frac{\mu_i}{\theta_i(x)}\right)^{n-2}. \eal \]
Note finally that, in the intermediate case $\sqrt{\mu_{i-1}\mu_i} \le d_{g_{\xi_i}}(\xi_i, x) \le 2 \sqrt{\mu_{i-1}\mu_i}$ we have 
\[  \left( \frac{\mu_i}{\theta_i(x)} \right)^{p+2} \sim \left( \frac{\mu_i}{\mu_{i-1}}\right)^{\frac{p+2}{2}} \sim  \mu_i^{\frac{p+2}{2}} \mu_{i-1}^{\frac{n-4-p}{2}} W_i (x) \] 
and this holds true for $0 \le p \le n-4$, so that the result follows.
\end{proof}

\begin{lemme}
For any $i, j \in \{1, \dots, k\}$, $i < j$, and any $x \in M$, we have
 \ben \label{tech2}
\bal
& \int_{B_{i} \backslash B_{i+1}} W_i^{2^*-2} W_j d_g(x,\cdot)^{2-n} dv_g \\
&\lesssim   
\left \{
\bal
& \left( \frac{\mu_j}{\mu_{i+1}} \right)^{\pui} \frac{\mu_{i+1}}{\mu_i} \mu_i^{1 - \frac{n}{2}}  
&  \textrm{ if } x \in B_{i+1} \\
&  \left( \frac{\mu_j}{\mu_i} \right)^{\pui} W_i(x)  &  \textrm{ if } x \in  B_i \backslash B_{i+1} \textrm{ and } \theta_j(x) \ge \mu_i  \\
&  \frac{\theta_j(x)^2}{\mu_{i}^2} W_j(x) &  \textrm{ if } x \in B_i \backslash B_{i+1} \textrm{ and } \theta_j(x) \le \mu_i \\
&  \left( \frac{\mu_j}{\mu_i} \right)^{\pui} W_i(x)  &  \textrm{ if } x \in M \backslash B_i .
\eal \right. \\
\eal
\een
\end{lemme}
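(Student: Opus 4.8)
The plan is to reduce \eqref{tech2} to the already-established estimate \eqref{tech1} together with an elementary convolution bound for two Newtonian kernels. Since $i<j$ we have $\mu_j\ll\mu_i$, and by \eqref{defBi} the balls are nested with $d_g(\xi_{i+1},y)\ge\sqrt{\mu_i\mu_{i+1}}$ for $y\in B_i\backslash B_{i+1}$. First I would record the relevant geometry: iterating \eqref{defxi} (with $|z_\ell|\le1$) gives $d_g(\xi_{i+1},\xi_j)\lesssim\mu_{i+1}$, and since $d_g(\xi_{i+1},y)\ge\sqrt{\mu_i\mu_{i+1}}\gg\mu_{i+1}$ on $B_i\backslash B_{i+1}$ this yields $\mu_j+d_g(\xi_j,y)\sim d_g(\xi_{i+1},y)\sim\theta_{i+1}(y)$ there; hence, by \eqref{defW} and the boundedness of $\Phi$,
\[ W_j(y)\lesssim\mu_j^{\pui}\,d_g(\xi_{i+1},y)^{2-n}\sim\mu_j^{\pui}\,\theta_{i+1}(y)^{2-n}\qquad\text{on }B_i\backslash B_{i+1}.\]
Comparing $g_{\xi_i}$- and $g_{\xi_{i+1}}$-distances in the same way, $\theta_i(y)\sim\theta_{i+1}(y)$ on $A_1:=\{y\in B_i\backslash B_{i+1}:\,d_g(\xi_{i+1},y)\ge\mu_i\}$, while $\theta_i(y)\sim\mu_i$ — so that $W_i(y)^{2^*-2}\sim\mu_i^{-2}$ — on $A_2:=(B_i\backslash B_{i+1})\backslash A_1\subset B_g(\xi_{i+1},\mu_i)$.

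I would then split the integral in \eqref{tech2} as $\int_{A_1}+\int_{A_2}$. On $A_1$, writing $\theta_{i+1}(y)^{2-n}\sim\theta_i(y)^{2-n}=\mu_i^{2-n}\big(\mu_i/\theta_i(y)\big)^{n-2}$ and using \eqref{tech1} with $p=n-2$ (the case $p>n-4$), the $A_1$-contribution is bounded by a constant times
\[ \mu_j^{\pui}\mu_i^{2-n}\int_{B_i\backslash B_{i+1}}d_g(x,\cdot)^{2-n}\,W_i^{2^*-2}\Big(\frac{\mu_i}{\theta_i}\Big)^{n-2}dv_g\;\lesssim\;\Big(\frac{\mu_j}{\mu_i}\Big)^{\pui}W_i(x).\]
On $A_2$, using $W_i^{2^*-2}\sim\mu_i^{-2}$ and $W_j(y)\lesssim\mu_j^{\pui}d_g(\xi_{i+1},y)^{2-n}$, the $A_2$-contribution is $\mu_j^{\pui}\mu_i^{-2}$ times
\[ \int_{\{\sqrt{\mu_i\mu_{i+1}}\le d_g(\xi_{i+1},y)\le\mu_i\}}d_g(\xi_{i+1},y)^{2-n}\,d_g(x,y)^{2-n}\,dv_g(y),\]
which, by a dyadic decomposition of the annulus in $d_g(\xi_{i+1},\cdot)$ (Giraud's lemma together with the two cutoffs; here $n\ge7$), is $\lesssim d_g(\xi_{i+1},x)^{4-n}$ if $\sqrt{\mu_i\mu_{i+1}}\le d_g(\xi_{i+1},x)\lesssim\mu_i$, is $\lesssim\mu_i^2\,d_g(\xi_{i+1},x)^{2-n}$ if $d_g(\xi_{i+1},x)\gtrsim\mu_i$, and is $\lesssim(\mu_i\mu_{i+1})^{(4-n)/2}$ if $d_g(\xi_{i+1},x)\le\sqrt{\mu_i\mu_{i+1}}$.

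Finally I would read off the four cases of \eqref{tech2}, noting that for $x\notin B_{i+1}$ one has $\theta_j(x)\sim\max(\mu_j,d_g(\xi_{i+1},x))$, so that $\theta_j(x)\le\mu_i$ versus $\theta_j(x)\ge\mu_i$ corresponds to $d_g(\xi_{i+1},x)\lesssim\mu_i$ versus $d_g(\xi_{i+1},x)\gtrsim\mu_i$. When $x\in B_{i+1}$ the $A_2$-term equals $\mu_j^{\pui}\mu_i^{-n/2}\mu_{i+1}^{2-n/2}=(\mu_j/\mu_{i+1})^{\pui}(\mu_{i+1}/\mu_i)\mu_i^{1-n/2}$ and, by \eqref{growthmuell}, dominates the $A_1$-term; when $x\in B_i\backslash B_{i+1}$ with $\theta_j(x)\ge\mu_i$ and when $x\in M\backslash B_i$, the $A_1$-term $(\mu_j/\mu_i)^{\pui}W_i(x)$ dominates (one checks the $A_2$-term is of the same size or smaller, using $W_i(x)\sim\mu_i^{\pui}d_g(\xi_{i+1},x)^{2-n}$, resp. $W_i(x)\sim\mu_i^{\pui}d_g(\xi_i,x)^{2-n}$); and when $x\in B_i\backslash B_{i+1}$ with $\theta_j(x)\le\mu_i$ the $A_2$-term gives $\mu_j^{\pui}\mu_i^{-2}d_g(\xi_{i+1},x)^{4-n}\sim\theta_j(x)^2\mu_i^{-2}W_j(x)$ and dominates. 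This yields \eqref{tech2}.

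The delicate point, and the place where all scales interact, is the analysis on $A_2$: there $W_i$ is essentially constant, so the remaining factor is a bare convolution of two Newtonian kernels over an annulus, and one must keep simultaneous track of the inner radius $\sqrt{\mu_i\mu_{i+1}}$, the outer radius $\mu_i$, and the position of $x$. This is precisely what produces the dichotomy according to whether $\theta_j(x)\le\mu_i$ or $\theta_j(x)\ge\mu_i$ in the statement; once it is settled, the rest is a matter of comparing powers of $\ve$ via \eqref{growthmuell} and invoking \eqref{tech1}.
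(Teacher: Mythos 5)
Your proof is correct, and it takes a genuinely different route from the paper's. The paper argues case-by-case in the position of $x$, performing for each regime a separate change of variables (around $\xi_i$ at scale $\mu_i$ when $x\in M\backslash B_i$, around $\xi_j$ at scale $\theta_j(x)$ when $x\in B_{i+1}$, and around $\xi_j$ at scale $\mu_i$ when $x\in B_i\backslash B_{i+1}$) and then evaluating the resulting rescaled integral by a Giraud-type argument, with the dichotomy $|\check x|\gtrsim1$ vs.\ $|\check x|\lesssim1$ playing the role of your $\theta_j(x)\ge\mu_i$ vs.\ $\theta_j(x)\le\mu_i$. You instead fix the decomposition of the integration domain once and for all — $A_1$ where $\theta_i\sim\theta_{i+1}$, $A_2$ where $W_i$ is essentially constant — reduce $A_1$ to the already-proved \eqref{tech1} with $p=n-2$, and reduce $A_2$ to a bare two-kernel convolution over an annulus, reading off the four regimes of $x$ only at the very end. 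Your route is more modular: it makes the reduction to \eqref{tech1} explicit (so the new work is isolated in the $A_2$ convolution estimate, which is elementary) and it explains transparently where the threshold $\theta_j(x)\lessgtr\mu_i$ comes from, namely the position of $x$ relative to the annulus supporting $A_2$. The paper's version is more self-contained and avoids the bookkeeping of comparing $A_1$- and $A_2$-contributions in each regime, at the cost of three separate rescaled computations. Both are sound; the constants and the comparison $\theta_j\sim d_g(\xi_{i+1},\cdot)\sim\theta_{i+1}$ on $B_i\backslash B_{i+1}$, which you use to absorb $W_j$ into a Newtonian kernel centered at $\xi_{i+1}$, are justified exactly as you describe via \eqref{defxi} and the nesting \eqref{defBi}.
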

\begin{proof}

Assume first that $d_{g_{\xi_i}}(\xi_i, x) \ge  \sqrt{\mu_i \mu_{i-1}}$, that is $x \in M \backslash B_i$ (this case is empty if $i=1$). Changing variables in the integral by $y = \exp_{\xi_i}^{g_{\xi_i}}(\mu_i x)$ and using Giraud's lemma yields in this case
 $$\int_{B_{i} \backslash B_{i+1}} W_i^{2^*-2} W_j d_g(x,\cdot)^{2-n} dv_g \lesssim \theta_i(x)^{2-n} \mu_j^{\pui} =  \left( \frac{\mu_j}{\mu_i} \right)^{\pui} W_i(x) .  $$
Assume now that $x \in B_{i+1}$, so in particular $\theta_j(x) \lesssim \sqrt{\mu_{i+1} \mu_i}$ by \eqref{defti}. We let the change of variables 
 $$y = \exp_{\xi_j}^{g_{\xi_j}}(\theta_j(x) z)$$
  in the integral. By \eqref{defxi} one has 
$$\bal
  \int_{B_{i} \backslash B_{i+1}} & W_i^{2^*-2} W_j d_g(x,\cdot)^{2-n} dv_g  \\
  &\lesssim \mu_i^{-2} \mu_j^{\pui} \theta_j(x)^{4-n} \\
  & \times \int_{B_0 \left(2\frac{\sqrt{\mu_{i-1}\mu_i}}{\theta_j(x)} \right) \backslash B_0 \left(\frac12 \frac{\sqrt{\mu_{i+1}\mu_i}}{\theta_j(x)} \right)} |\check{x}_i - z|^{2-n} \Big( \frac{\mu_j}{\theta_j(x)} + |z|\Big) ^{2-n} dz,  
  \eal$$
where we have let $\check{x}_i= \frac{1}{\theta_j(x)} (\exp_{\xi_j}^{g_{\xi_j}})^{-1}(x)$. As is easily checked, the previous integral is always uniformly integrable whatever the value of $\theta_j(x)$ is, and there holds $|\check{x}_i| \le 1$ by definition of $\theta_j$. So that in the end
 $$ \bal \int_{B_{i} \backslash B_{i+1}} W_i^{2^*-2} W_j d_g(x,\cdot)^{2-n} dv_g & \lesssim \mu_i^{-2} \mu_j^{\pui} \theta_j(x)^{4-n} \int_{\mathbb{R}^n \backslash B_0 \left(\frac{\sqrt{\mu_{i+1}\mu_i}}{\theta_j(x)} \right)}  |z|^{4-2n} dz \\
 & \lesssim \mu_i^{-2} \mu_j^{\pui} \theta_j(x)^{4-n}\left( \frac{\theta_j(x)}{\sqrt{\mu_{i+1}\mu_i}}\right)^{n-4} \\
 & \lesssim \left( \frac{\mu_j}{\mu_{i+1}} \right)^{\pui} \frac{\mu_{i+1}}{\mu_i} \mu_i^{1 - \frac{n}{2}}. 
 \eal $$
Assume now that $x \in B_i \backslash B_{i+1}$. The change of variables
 $$y = \exp_{\xi_j}^{g_{\xi_j}}(\mu_i z)$$
 gives 
 \ben \label{controleint}
 \bal 
     \int_{B_{i} \backslash B_{i+1}} & W_i^{2^*-2} W_j d_g(x,\cdot)^{2-n} dv_g \\
  &  \lesssim \mu_j^{\pui} \mu_i^{2-n} \int_{\RR^n \backslash B_0 \big(\frac12 \sqrt{\frac{\mu_{i+1}}{\mu_i}}\big)} (1 + |\check{\xi}-y|)^{-4} |y|^{2-n} |\check{x}-y|^{2-n} dy,
    \eal \een
where we have let $\check{\xi} = \frac{1}{\mu_i} (\exp_{\xi_j}^{g_{\xi_j}})^{-1}(\xi_i)$ and $\check{x} = \frac{1}{\mu_i} (\exp_{\xi_j}^{g_{\xi_j}})^{-1}(x)$. Since $x \not \in B_{i+1}$ we have $\theta_j(x) \gtrsim \sqrt{\mu_{i+1} \mu_i}$ and thus $|\check{x}| \gtrsim \frac{\theta_j(x)}{\mu_i}$. Assume first that $|\check{x}| \gtrsim 1$. Then we also have $\theta_j(x) \gtrsim \theta_i(x)$, so that classical integral comparison results yield
$$  \int_{B_{i} \backslash B_{i+1}}  W_i^{2^*-2} W_j d_g(x,\cdot)^{2-n} dv_g \lesssim \mu_j^{\pui} \mu_i^{2-n} (1 + |\check{x}|)^{2-n} \lesssim  \left( \frac{\mu_j}{\mu_i} \right)^{\pui} W_i(x) . $$
Assume finally that $|\check{x}| \lesssim 1$. A standard Giraud-type argument (see e.g. lemma 7.5 in Hebey \cite{HebeyZLAM}) yields with \eqref{controleint}
$$ \bal
  \int_{B_{i} \backslash B_{i+1}}  W_i^{2^*-2} W_j d_g(x,\cdot)^{2-n} dv_g & \lesssim  \mu_j^{\pui} \mu_i^{2-n} |\check{x}|^{4-n} \\
  & \lesssim  \frac{\theta_j(x)^2}{\mu_{i}^2} W_j(x),
  \eal $$
    which concludes the proof of \eqref{tech2}.
\end{proof}

\bibliographystyle{amsplain}
\bibliography{bibliobis}

\end{document}